\documentclass{amsart}
\usepackage{amssymb,amsfonts,amsmath,amsthm,graphicx,mathrsfs,mathtools,xypic,hyperref,caption,svg,epstopdf}
\hypersetup{colorlinks=true,linkcolor=blue,citecolor=blue}
\newtheorem{Theorem}{Theorem}[section]
\newtheorem{Lemma}[Theorem]{Lemma}
\newtheorem{Proposition}[Theorem]{Proposition}
\newtheorem{Corollary}[Theorem]{Corollary}
\theoremstyle{definition}

\theoremstyle{remark}
\newtheorem*{Remark}{Remark}

\newcommand{\eps}{\epsilon}

\renewcommand{\S}{\mathscr{S}}

\newcommand{\dist}{\mathrm{dist}}
\newcommand{\diam}{\mathrm{diam}}

\newcommand{\post}{\mathrm{post}}

\DeclareMathAlphabet{\mathpzc}{OT1}{pzc}{m}{it}
\bibliographystyle{alpha}

\begin{document}
\title{Quasisymmetric Embeddability of Weak Tangents}
\author{Wen-Bo Li}
\address{Department of Mathematics, University of Toronto, 40 St. George Street, Toronto, Ontario, Canada M5S 2E4}
\email{wenboli@math.toronto.edu}
\date{}
\subjclass[2010]{Primary 30L05, 30L10; Secondary 37F15}
\thanks{Key words: Weak tangents, quasisymmetric embeddings, hyperbolic spaces, expanding Thurston maps}
\begin{abstract}
	In this paper, we study the quasisymmetric embeddability of weak tangents of metric spaces. We first show that quasisymmetric embeddability is hereditary, i.e., if $X$ can be quasisymmetrically embedded into $Y$, then every weak tangent of $X$ can be quasisymmetrically embedded into some weak tangent of $Y$, given that $X$ is proper and doubling. However, the converse is not true in general; we will illustrate this with several counterexamples. In special situations, we are able to show that the embeddability of weak tangents implies global or local embeddability of the ambient space. Finally, we apply our results to Gromov hyperbolic groups and visual spheres of expanding Thurston maps.
\end{abstract}
\maketitle

\tableofcontents

\section{Introduction}\label{Introduction}

The Gromov-Hausdorff distance gives a precise meaning to how close or far apart two arbitrary (compact) metric spaces are. The definition that is widely used nowadays can be traced back to Gromov in \cite{Gr81a} and \cite{Gr81b}. More precisely, the Gromov-Hausdorff distance of two compact metric spaces is the infimum Hausdorff distance of their isometric images in the same space. Intuitively, it measures how far the two compact metric spaces are from being isometric.

The weak tangents of a metric space are analogous to the tangent planes of a surface. Let $X$ be a metric space and $p \in X$. ``Blowing up" $X$ near $p$ generates a sequence of dilations, which provides a better and better illustration of the local behavior near $p$. A weak tangent at $p$ is the limit (if it exists) of such a sequence, where the limit is in the (pointed) Gromov-Hausdorff sense. Gromov's compactness theorem shows that a subconvergent metric space always exists, given some conditions on the ambient space. See Propositions \ref{precompactness} and \ref{psprecompactness}.

A natural question raised regarding weak tangents is: Is there any analytical or geometrical relation between the ambient space and its weak tangents? There are some answers to this question. One remarkable achievement in answering this question is the Cheeger-Colding theory. The Cheeger-Colding theory investigates the analytical and geometrical properties of the weak tangents of complete connected manifolds with lower bounded Ricci curvatures. See \cite{CC97},\cite{CC00a} and \cite{CC00b} for more details. The Cheeger-Colding theory is widely used in many important works, including the proof of Thurston's geometrization conjecture and the proof of the existence of K\"ahler-Einstein metrics on Fano manifolds. Weak tangents also play a role in studying Poincar\'e inequalities on metric spaces, see \cite{Ch99} and \cite{CK15} for examples. In \cite{BKM09} and \cite{BM13}, metric fractals are studied together with weak tangents.

A homeomorphism $f: X \to Y$ is \emph{$\eta$-quasisymmetric}, where $\eta:[0, \infty) \to [0, \infty)$ is a homeomorphism, if
\[
\frac{d_Y(f(x),f(y))}{d_Y(f(x),f(z))} \leq \eta \left(\frac{d_X(x,y)}{d_X(x,z)} \right)
\]
for all $x, y, z \in X$ with $x \neq z$. A quasisymmetry is a generalization of a bi-Lipschitz map, and it preserves the approximate shape and the relative size. A metric space is \emph{proper} if the closure of every ball is compact. A metric space is \emph{doubling} if every ball can be covered by a uniform finite number of balls with half radius. See Section \ref{Preliminaries} for more information.

In this paper, we study the quasisymmetric embeddability of weak tangents of metric spaces.  We denote by $WT_p(X)$ the collections of all weak tangents at $p$ of $X$. We are interested in how the quasisymmetric embeddability of the ambient space relates to that of its weak tangents.

\begin{Theorem}\label{QSweakT}
	Let $X, Y$ be proper, doubling metric spaces and $f: (X, p, d_X) \to (Y, q, d_Y)$ be an $\eta$-quasisymmetric map. For any weak tangent $T_pX \in WT_p(X)$, there exists a weak tangent $T_qY \in WT_q(Y)$ such that $T_pX$ is $\eta$-quasisymmetric equivalent to $T_qY$.
\end{Theorem}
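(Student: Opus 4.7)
Fix $T_pX \in WT_p(X)$ realized as the pointed Gromov--Hausdorff limit of $(X, \la_n d_X, p)$ along some sequence $\la_n \to \infty$. The strategy is to choose a matching sequence of scales $\mu_n$ on $Y$ so that, along a subsequence, $(Y, \mu_n d_Y, q)$ converges to a weak tangent $T_qY$ and the rescaled maps $f_n := f : (X, \la_n d_X, p) \to (Y, \mu_n d_Y, q)$ admit a subsequential limit $F : T_pX \to T_qY$ which is an $\eta$-quasisymmetric equivalence. Assuming $T_pX$ is nontrivial (else take $T_qY = \{q\}$), I would pick $x_n \in X$ with $\la_n d_X(x_n, p) \in [1, 2]$ and set $\mu_n := 1/d_Y(f(x_n), q)$; then $\mu_n \to \infty$ by continuity of $f$. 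By Gromov's compactness theorem (Proposition \ref{precompactness}) applied to the proper doubling space $Y$, pass to a further subsequence so that $(Y, \mu_n d_Y, q) \to T_qY$ for some $T_qY \in WT_q(Y)$.

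Next, $f_n$ is $\eta$-quasisymmetric since quasisymmetry is scale-invariant, and satisfies $f_n(p) = q$ and $\mu_n d_Y(f_n(x_n), q) = 1$. Combining the QS inequality applied at the basepoint $p$ (which bounds $\mu_n d_Y(f_n(x), q)$ in terms of $\la_n d_X(x, p)$ via $\eta$) with the QS inequality applied at arbitrary reference points, one checks that $\{f_n\}$ is pointwise equicontinuous and uniformly bounded on bounded subsets of the rescaled $X$. Transporting the $f_n$ via the $\eps_n$-isometries realizing the pointed GH convergences into common ambient spaces, a diagonal Arzel\`a--Ascoli extraction produces a subsequential limit $F : T_pX \to T_qY$.

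The $\eta$-QS inequality for $f_n$ passes to the limit by continuity of $\eta$ and convergence of the metrics, so $F$ satisfies the $\eta$-quasisymmetric estimate provided it is a homeomorphism. For bijectivity, apply the analogous construction to $f^{-1}$, which is $\tilde\eta$-quasisymmetric for some $\tilde\eta$ determined by $\eta$, using the same scales $\mu_n, \la_n$ to obtain a limit map $G : T_qY \to T_pX$ along a further subsequence. Passing to the limit in $f_n \circ f_n^{-1} = \mathrm{id}$ and $f_n^{-1} \circ f_n = \mathrm{id}$ yields $F \circ G = \mathrm{id}_{T_qY}$ and $G \circ F = \mathrm{id}_{T_pX}$, so $F$ is the desired $\eta$-quasisymmetric equivalence.

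The main technical hurdle is executing the Arzel\`a--Ascoli extraction across varying pointed metric spaces, since the sources and targets of the $f_n$ themselves depend on $n$: one must realize both GH convergences via isometric embeddings into a common ambient space and verify equicontinuity of $\{f_n\}$ in these embeddings on each compact subset, then relabel to a single subsequence on which all convergences hold simultaneously. A related subtlety is that the explicit normalization $\mu_n d_Y(f_n(x_n), q) = 1$ is essential to prevent $F$ from collapsing to the constant map $q$, which would fail to be a quasisymmetric equivalence.
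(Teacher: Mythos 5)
Your proposal follows essentially the same route as the paper: the paper also normalizes the target scale by $\omega_n = d_Y(q_n, f(x_n))$ for a point $x_n$ at rescaled distance comparable to $1$ from the basepoint, extracts the target tangent by precompactness, and obtains the limit quasisymmetry via equicontinuity and an Arzel\`a--Ascoli diagonal argument (packaged there as the Keith--Laakso Lemma \ref{upfc}), with the degenerate case organized by uniform perfectness of $X$ rather than by triviality of $T_pX$ as in your plan --- both dichotomies reduce to the same point. The only adjustment you should make is that the paper's weak tangents in $WT_p(X)$ allow moving basepoints $p_n \to p$, so the rescaling must be performed at $p_n$ with images $q_n = f(p_n)$ rather than at the fixed point $p$ (and the cited compactness statement should be the pointed version, Proposition \ref{psprecompactness}); with that change your argument goes through verbatim.
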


Theorem \ref{QSweakT} shows that quasisymmetric embeddability is hereditary. Roughly speaking, any quasisymmetric embedding between two metric spaces induces quasisymmetric embeddings between their weak tangents.  However, the converse implication is not true in general. See Section \ref{CE} for several counterexamples. 

A metric space $X$ is self-quasisymmetric if for any point $p$ and any number $r > 0$ there exists a neighborhood of $p$ with diameter less than $r$ that is uniformly quasisymmetric to $X$, and quasi-self-symmetric if the neighborhoods above are uniformly quasisymmetric to some subsets of $X$ with bounded sizes. See Section \ref{LEIGE} for more details. In special situations, we are able to prove the following two partial converse implications:

\begin{Theorem}\label{sqe}
	Let $X$ be a proper, doubling, $\eta$-self-quasisymmetric space and $p$ be a point in $X$. If every weak tangent of $p$ is $\theta$-quasisymmetrically embedded into $Y$, then $X$ is $\vartheta$-quasisymmetrically embedded into $Y$, where $\vartheta$ depends only on $\eta$ and $\theta$.
\end{Theorem}

Theorem \ref{sqe} shows that self-quasisymmetric metric spaces satisfy a global converse implication. We can further strengthen Theorem \ref{sqe} by adding more conditions. See Propositions \ref{lsqp} and the remark following it.

The following local embedding theorem gives another type of converse implication:

\begin{Theorem}\label{wttoe}
	Let $X$ be a compact, proper, doubling, $\eta$-quasi-self-symmetric space, $p$ be a point in $X$ and $T_pX$ be a weak tangent in $WT_p(X)$. If $T_pX$ is $\theta$-quasisymmetrically embedded into Y, then there exists a ball $B(q, r)$ in $X$ such that $B(q,r)$ is $\vartheta$-quasisymmetrically embedded into $Y$, where $\vartheta$ depends only on $\eta$ and $\theta$. If $X$ is uniformly perfect, $r$ depends only on $X$ and $\eta$.
\end{Theorem}

Theorem \ref{wttoe} shows that quasi-self-symmetric spaces satisfy a local converse implication. Theorem \ref{wttoe} is a special case of Theorem \ref{wttob}, which induces several applications. 

A Kleinian group $\Gamma$ is a discrete subgroup of isometries of a hyperbolic space and the limit set of $\Gamma$ is the set of accumulation points of the orbit $\Gamma p$ of any element $p$ in the hyperbolic space. A Schottky set is a compact subset of $\mathbb{S}^2$ whose complement is a union of at least three round open disks whose closures are disjoint. See Section \ref{GHSG} for more details.

The first application is a rigidity theorem for Kleinian groups whose limit sets are Schottky sets. 

\begin{Theorem}\label{lsigm}
	Suppose that $\Gamma$ and $\tilde{\Gamma}$ are Kleinian groups whose limit sets $S$ and $\tilde{S}$ are Schottky sets, respectively.  We assume that $\Gamma$ acts on $S$ and $\tilde{\Gamma}$ acts on $\tilde{S}$ as uniform convergence groups. Let $p$, $\tilde{p}$ be points in $S$ and $\tilde{S}$, respectively. If there exist a weak tangent $T_pS$ at $p$ of $S$, a weak tangent $T_{\tilde{p}} \tilde{S}$ at $\tilde{p}$ of $\tilde{S}$ and a quasisymmetry $f: (T_pS, p_\infty) \to (T_{\tilde{p}}\tilde{S}, \tilde{p}_\infty)$ such that $f(p_\infty) = \tilde{p}_\infty$, then there exists a M\"obius transformation mapping $S$ to $\tilde{S}$.
\end{Theorem}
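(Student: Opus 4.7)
The plan is to combine the weak-tangent hypothesis with Theorem \ref{wttob} to build a quasisymmetric equivalence $F : S \to \tilde{S}$, and then to invoke the Bonk--Kleiner--Merenkov rigidity theorem for Schottky sets to conclude that $F$ is the restriction of a Möbius transformation of $\mathbb{S}^2$. First I would verify that the hypotheses of Theorem \ref{wttob} hold on both sides. A uniform convergence group action on a limit set furnishes a ``conformal elevator'': group elements can be chosen to map balls of comparable scales to one another with controlled quasisymmetric distortion. This makes $S$ and $\tilde{S}$ compact, proper, doubling, uniformly perfect, and quasi-self-symmetric in the sense of Section \ref{LEIGE}. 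Theorem \ref{wttob} then produces balls $B(q, r) \subset S$ and $B(\tilde{q}, \tilde{r}) \subset \tilde{S}$ together with $\eta'$-quasisymmetric embeddings $f : B(q, r) \to T_p S$ and $\tilde{f} : B(\tilde{q}, \tilde{r}) \to T_{\tilde{p}} \tilde{S}$ with $f(q) = p_\infty$ and $\tilde{f}(\tilde{q}) = \tilde{p}_\infty$.

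Next, given the quasisymmetric equivalence $\varphi : T_p S \to T_{\tilde{p}} \tilde{S}$ provided by the hypothesis, the open sets $f(B(q, r))$ and $\varphi^{-1}(\tilde{f}(B(\tilde{q}, \tilde{r})))$ both contain a neighborhood of $p_\infty$ in $T_p S$, and hence have nonempty open intersection. Pulling this intersection back yields a quasisymmetric equivalence $h = \tilde{f}^{-1} \circ \varphi \circ f$ from an open subset $V \subset B(q, r)$ onto an open subset $U \subset B(\tilde{q}, \tilde{r})$, with distortion controlled by $\eta$, $\eta'$, and the distortion function of $\varphi$. Shrinking if necessary, I may assume $V$ and $U$ contain honest metric balls of $S$ and $\tilde{S}$ respectively.

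The crux, and the step I expect to be the main obstacle, is promoting this local quasisymmetry to a global quasisymmetric equivalence $F : S \to \tilde{S}$. Here I would again exploit the uniform convergence group dynamics on both sides: elements of $\Gamma$ can be chosen to expand $V$ so that its images cover $S$, while corresponding elements of $\tilde{\Gamma}$ dilate $U$ to cover $\tilde{S}$, with uniform quasisymmetric control coming from uniformity of the convergence group actions. A compactness and diagonal-extraction argument should then yield a global quasisymmetric map $F$; the delicate point is checking that the limit is well-defined and uniformly quasisymmetric on overlapping translates of $V$, which is where uniformity of the convergence action is essential.

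Finally, with a global quasisymmetric equivalence $F : S \to \tilde{S}$ in hand, I would invoke the Bonk--Kleiner--Merenkov rigidity theorem for Schottky sets: any quasisymmetric map between Schottky sets (in particular between limit sets of uniform convergence groups acting on Schottky sets) is the restriction to $S$ of a Möbius transformation of $\mathbb{S}^2$. This Möbius transformation then sends $S$ to $\tilde{S}$, completing the proof.
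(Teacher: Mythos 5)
Your first half matches the paper's intended argument: Corollary \ref{kge} (the uniform convergence group action gives an expanding cover by Corollary \ref{ls}, which is controlled because the group elements act as restrictions of M\"obius transformations, so Theorem \ref{epc} gives quasi-self-similarity) places $S$ and $\tilde{S}$ within the scope of Theorem \ref{wttob}, and composing the two ball-to-tangent embeddings with the given equivalence $\varphi$ of weak tangents yields a quasisymmetric map $h$ from an open subset of $S$ into $\tilde{S}$. The problem is what you do next. The globalization step that you yourself flag as the main obstacle is both unnecessary and, as described, does not go through: there is no a priori correspondence between the elements of $\Gamma$ that expand $V$ and the elements of $\tilde{\Gamma}$ that dilate $U$, and producing such a correspondence compatible with $h$ on overlapping translates is essentially equivalent to the conjugacy one is trying to establish, so the diagonal limit has no reason to be well defined. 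Moreover, the rigidity statement you invoke at the end (``any quasisymmetric map between Schottky sets is the restriction of a M\"obius transformation'') is false in that generality; the Bonk--Kleiner--Merenkov theorem for self-maps of Schottky sets requires measure zero, and global quasisymmetric rigidity fails for general Schottky sets.

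The correct finish, and the one the paper uses, is Merenkov's \emph{local} rigidity theorem (Theorem \ref{lsss}): for Kleinian groups acting on their Schottky-set limit sets as uniform convergence groups, any quasiconformal map defined on an open connected subset $A$ of $S$ into $\tilde{S}$ is automatically the restriction of a M\"obius transformation taking $S$ onto $\tilde{S}$. Your locally defined $h$, restricted to a connected open piece (which exists after the shrinking you already perform), satisfies these hypotheses, since quasisymmetric maps are quasiconformal. The M\"obius transformation therefore drops out of the local map directly, and no globalization is needed.
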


In simple words, Theorem \ref{lsigm} shows that a pointed quasisymmetric equivalence between any two weak tangents of the above spaces implies an equivalence of M\"obius transformations between the ambient spaces. Theorem \ref{lsigm} generalizes Theorem $1.1$ in \cite{Me14} to the situation of weak tangents.

An expanding Thurston map $f: S^2 \to S^2$ is a postcritically-finite branched covering map on a topological sphere with $\deg(f) \geq 2$ where $f$ locally expands $S^2$. A visual metric $\rho$ is a specific metric on $S^2$ generated by $f$ such that $f$ locally expands $(S^2, \rho)$ in a uniform way. We call $(S^2, \rho)$ a visual sphere of $f$. See Section \ref{VSETM} for a precise and rigorous  definition.

The other application is about the visual spheres of expanding Thurston maps:

\begin{Theorem}\label{vms}
	Let $(S^2, \rho)$ be a visual sphere of an expanding Thurston map $f : S^2 \to S^2$ that does not have periodic critical points. The following statements are equivalent:
	\begin{enumerate}
		\item $(S^2, \rho)$ is a quasi-sphere.
		\item Every weak tangent of $(S^2, \rho)$ is quasisymmetric to $\mathbb{R}^2$.
		\item There exists a weak tangent of $(S^2, \rho)$ that is quasisymmetric to $\mathbb{R}^2$.
		\item There exists an open subset $U \subset S^2$ such that $(U, \rho)$ is quasisymmetrically embedded into $\mathbb{R}^2$.
	\end{enumerate}
\end{Theorem}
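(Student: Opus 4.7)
The plan is to prove the cycle $(1) \Rightarrow (2) \Rightarrow (3) \Rightarrow (4) \Rightarrow (1)$. The implication $(2) \Rightarrow (3)$ is immediate once we observe that $(S^2, \rho)$ is compact, hence proper and doubling, so Gromov's precompactness theorems cited in the introduction guarantee that $WT_p(S^2, \rho) \neq \emptyset$.

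For $(1) \Rightarrow (2)$: apply Theorem \ref{QSweakT} to any quasisymmetric equivalence $(S^2, \rho) \to \mathbb{S}^2$. Each weak tangent of $(S^2, \rho)$ is then quasisymmetric to some weak tangent of the round sphere $\mathbb{S}^2$, and every weak tangent of $\mathbb{S}^2$ is isometric to $\mathbb{R}^2$ because $\mathbb{S}^2$ is a smooth Riemannian $2$-manifold.

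For $(3) \Rightarrow (4)$: from the Bonk--Meyer structure theory of expanding Thurston maps, $(S^2, \rho)$ is $\eta$-quasi-self-symmetric, since iterates of $f$ furnish uniformly quasisymmetric equivalences between tiles of different levels with distortion controlled independently of the level. Theorem \ref{wttob} therefore produces a ball $B(q, r) \subset S^2$ and a quasisymmetric embedding $B(q, r) \hookrightarrow T_p X$. Composing with a quasisymmetry $T_p X \to \mathbb{R}^2$ furnished by (3) exhibits $U := B(q, r)$ as the open set required in (4).

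The main obstacle is $(4) \Rightarrow (1)$: one must promote a local quasisymmetric embedding into $\mathbb{R}^2$ to a global quasisymmetric equivalence with $\mathbb{S}^2$. The idea is to propagate local embeddability using the dynamics of $f$. Expansion gives a tile $X^n$ at some level $n$ with $X^n \subset U$, so $X^n$ quasisymmetrically embeds into $\mathbb{R}^2$. The hypothesis that $f$ has no periodic critical points is exactly what is needed to guarantee that iterates of $f$ restrict to uniformly quasisymmetric maps between any two tiles of the same level: inverse branches can then be chosen to avoid critical orbits, with distortion bounded only in terms of $f$ and $\rho$. Consequently, every tile at level $n$ quasisymmetrically embeds into $\mathbb{R}^2$ with the same distortion function, and finitely many such tiles cover $S^2$. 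Combining this with the standard fact that $(S^2, \rho)$ is a linearly locally connected topological $2$-sphere, one can now invoke Bonk--Kleiner's quasisymmetric uniformization theorem---or the corresponding refinement for visual spheres in the Bonk--Meyer monograph characterizing when a visual sphere is a quasi-sphere---to patch the local embeddings into a global quasisymmetric homeomorphism $(S^2, \rho) \to \mathbb{S}^2$. The delicate point, and the heart of the argument, is verifying that the distortion functions really remain uniform across the covering, which is precisely where the no-periodic-critical-points assumption is indispensable.
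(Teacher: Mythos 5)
Your cycle $(1)\Rightarrow(2)\Rightarrow(3)\Rightarrow(4)\Rightarrow(1)$ matches the paper's, and the first two implications are handled the same way. But there are two genuine gaps. The larger one is in $(4)\Rightarrow(1)$: the patching step you defer to ``Bonk--Kleiner or the Bonk--Meyer refinement'' is not available. Bonk--Kleiner's uniformization theorem requires Ahlfors $2$-regularity, which visual spheres generally fail (their Hausdorff dimension typically exceeds $2$), and the Bonk--Meyer characterization of quasi-spheres is phrased in terms of topological conjugacy to a rational map---it provides no mechanism for assembling local quasisymmetric charts into a global quasisymmetry. That assembly is precisely the content of the paper's proof: expansion pulls $U$ back to show each of the two $0$-tiles $U_1,U_2$ embeds quasisymmetrically (here your observation that $f^n$ restricted to an $n$-tile is bi-Lipschitz for the visual metric is correct and is what the paper uses); one then normalizes both embeddings to $\overline{\mathbb{D}}$, uses that $\mathcal{C}$ is a quasicircle together with a Beurling--Ahlfors extension to force the two maps to agree on $\mathcal{C}$, glues them to a homeomorphism onto $\mathbb{S}^2$, and finally verifies the quasisymmetry inequality for the glued map by a direct three-case estimate (the points $x,y,z$ distributed among $U_1$ and $U_2$). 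That explicit verification is the bulk of the argument and is entirely absent from your proposal; no general theorem substitutes for it. Also, the hypothesis that $f$ has no periodic critical points is used to make $(S^2,\rho)$ doubling (Proposition \ref{vmetm}), not to produce uniformly quasisymmetric maps between arbitrary same-level tiles---the latter claim is not needed and is not justified.

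The second gap is in $(3)\Rightarrow(4)$: $(S^2,\rho_0)$ is \emph{not} $\eta$-quasi-self-symmetric in the uniform sense required by Theorem \ref{wttob}, because the standard size of the distortion map at a point $x$ degenerates like $\rho_0(x,p)$ as $x$ approaches a postcritical point $p$ (the scaling branches of $f^{-n}$ only exist on balls avoiding $\post(f)$, cf.\ Proposition \ref{svm}). When the weak tangent is based at a point of $\post(f)$, the paper must instead invoke the weakly-quasi-self-symmetric refinement, Theorem \ref{wttobd}, packaged as Lemma \ref{vsqss}, whose hypothesis $r_{p'}\geq C\, d(p,p')$ is exactly what the linear degeneration of the standard size permits. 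Away from $\post(f)$ your argument goes through via Corollary \ref{lwttob}, but the postcritical case needs this extra step.
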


Theorem \ref{vms} illustrates a complete characterization of when visual spheres of expanding Thurston maps are quasi-spheres. Wu also proved it in \cite{Wu19} with ideas from dynamics. Here we give an alternate proof with the ideas generated in Section \ref{LEIGE} of this paper. Lemma \ref{gqs} is essential in the proof of Theorem \ref{vms} and it may evoke independent interests for other problems.

The paper is organized as follows. In Section \ref{Preliminaries} we give basic definitions and properties of Gromov-Hausdorff distance and weak tangents. In Section \ref{QEIH} we prove Theorem \ref{QSweakT} and show some results inspired by this theorem. However, the converse is not true in general and  we illustrate several counterexamples in Section \ref{CE}. Section \ref{LEIGE} is devoted to proving Theorem \ref{sqe} and Theorem \ref{wttoe} and thus showing that in specific situations the embeddability of weak tangents implies the embeddability(or local embeddability) of the ambient space. In Section \ref{GHSG} we study Gromov hyperbolic spaces and groups and prove Theorem \ref{lsigm}. In the last section, Section \ref{VSETM}, we investigate expanding Thurston maps and visual spheres and prove Theorem \ref{vms}.

\section*{Acknowledgments}

The author wants to thank Sergiy Merenkov and Jeremy Tyson for their helpful advice. He is grateful to Ilya Kapovich for suggesting applications on hyperbolic groups and for an idea used in Section \ref{GHSG}. He thanks Mario Bonk and Angela Wu for explaining visual spheres of expanding Thurston maps.

\section{Preliminaries}\label{Preliminaries}

\subsection{Notations and preliminaries}

Let $(X,d_X)$ be a metric space. We denote by
\[
B(x,r) = \{x' \in X: d_X(x,x') < r\} \qquad \textrm{and} \qquad \overline{B}(x,r) = \{x' \in X: d_X(x,x') \leq r\}
\]
the open and closed balls centered at $a$ with radius $r$, respectively. Furthermore, we denote by
\[
\partial B(x,r) = \overline{B}(x,r) \setminus B(x,r) = \{x' \in X: d_X(x,x') = r\}
\]
the metric boundary of $B(x,r)$. Notice that $\partial B(x,r)$ is not the same as the topological boundary of $B(x,r)$.

 If $X'$ is a subset of $X$, we denote by
\[
N_r(X') = \{x \in X: \exists \ x' \in X' \ \textrm{such that} \ d_X(x,x') < r\}
\]
the $r$-neighborhood of $X'$ for any $r > 0$.

We denote by $\mathbb{S}^n, \mathbb{D}^n, \mathbb{H}^n$ the $n$-dimensional unit sphere, unit ball and hyperbolic space, respectively. Specifically, we denote by $| \ \cdot \ |$ the standard metric in Euclidean spaces.

A homeomorphism $f: X\to Y$ is \emph{(metrically) $H$-quasiconformal} for some $1\leq H <\infty$ if
\begin{align*}
\limsup_{r \to 0}\frac{\sup \{ d_Y(f(x),f(y)) : d_X(x,y) \leq r\}}{\inf \ \{ d_Y(f(x),f(y)) : d_X(x,y) \geq r\}} \leq H
\end{align*}
for $\forall x \in X$. A map is quasiconformal if it is $H$-quasiconformal for some $H$.

A homeomorphism $f: X \to Y$ is \emph{$\eta$-quasisymmetric}, where $\eta:[0, \infty) \to [0, \infty)$ is a homeomorphism, if
\[
\frac{d_Y(f(x),f(y))}{d_Y(f(x),f(z))} \leq \eta \left(\frac{d_X(x,y)}{d_X(x,z)} \right)
\]
for all $x, y, z \in X$ with $x \neq z$. The map $f$ is called \emph{quasisymmetric} if it is $\eta$-quasisymmetric for some \emph{distortion function} $\eta$.

It is clear that every quasisymmetry is quasiconformal.

Here are some useful properties of quasisymmetric maps, which will be used repeatedly in the paper. Readers may refer to Proposition $10.6$ and $10.8$ of \cite{He01} for a proof.

\begin{Proposition}\label{diam}
	Suppose $f:X\to Y$ and $g:Y\to Z$ are $\eta$ and $\theta$-quasisymmetric mappings, respectively.
	\begin{enumerate}
		\item The composition $g \circ f : X \to Z$ is an $\theta \circ \eta$-quasisymmetric map.
		\item The inverse $f^{-1}:Y\to X$ is an $\eta'$-quasisymmetric map, where $\eta'(t) = 1/\eta^{-1}\!\!\left(\frac{1}{t}\right)$.
		\item If $A$ and $B$ are bounded subsets of $X$ and $A\subset B$, then
		\begin{equation*}
		\frac{1}{2\eta\left( \frac{\diam(B)}{\diam(A)} \right)} \leq \frac{\diam(f(A))}{\diam(f(B))} \leq \eta\left( \frac{2\diam(A)}{\diam(B)} \right).
		\end{equation*}
	\end{enumerate}
\end{Proposition}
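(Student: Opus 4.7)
The plan is to handle the three parts in order, each by a short direct manipulation of the quasisymmetry inequality.

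For part (1), I would simply plug three points $x, y, z \in X$ with $x \neq z$ into the definitions and chain
\[
\frac{d_Z(g(f(x)), g(f(y)))}{d_Z(g(f(x)), g(f(z)))} \leq \theta\!\left( \frac{d_Y(f(x), f(y))}{d_Y(f(x), f(z))} \right) \leq \theta\!\left( \eta\!\left( \frac{d_X(x, y)}{d_X(x, z)} \right) \right),
\]
using that $\theta$ is monotone increasing as a homeomorphism $[0, \infty) \to [0, \infty)$ fixing $0$. For part (2), given $a, b, c \in Y$ with $a \neq c$, set $x, y, z$ to be their preimages under $f$ (the case $a = b$ being trivial). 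The key move is to apply the $\eta$-quasisymmetry of $f$ \emph{with the roles of $y$ and $z$ interchanged}, yielding $d_Y(a, c)/d_Y(a, b) \leq \eta(d_X(x, z)/d_X(x, y))$. Since $\eta$ is a strictly increasing homeomorphism, applying $\eta^{-1}$ and then reciprocating both sides transforms this into $d_X(x, y)/d_X(x, z) \leq 1/\eta^{-1}(d_Y(a, c)/d_Y(a, b)) = \eta'(d_Y(a, b)/d_Y(a, c))$, as required.

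For the upper bound in part (3), I would use a standard \emph{far-point} trick: for any fixed $x \in A$ and any $\eps > 0$, a triangle-inequality argument produces a point $u \in B$ with $d_X(x, u) \geq \diam(B)/2 - \eps$, since otherwise $B$ would be contained in the ball around $x$ of radius $\diam(B)/2 - \eps$, contradicting the definition of $\diam(B)$. Plugging $x, y, u$ (for any $y \in A$) into the $\eta$-quasisymmetry inequality and bounding $d_Y(f(x), f(u)) \leq \diam f(B)$ yields $d_Y(f(x), f(y)) \leq \eta(2 \diam(A)/(\diam(B) - 2\eps))\, \diam f(B)$; letting $\eps \to 0$ and then taking the supremum over $x, y \in A$ completes the upper bound. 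The lower bound follows by applying this inequality to $f^{-1}$ (which is $\eta'$-quasisymmetric by part (2)) with the sets $f(A) \subset f(B)$, then unwinding the identity $\eta'(t) = 1/\eta^{-1}(1/t)$ algebraically to solve for $\diam f(A)/\diam f(B)$.

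There is no substantive obstacle in this proposition; it is a compendium of standard manipulations of the quasisymmetry inequality. The only places requiring a bit of care are the correct interchange of indices in part (2) and the bookkeeping of the factor $2$ together with the inversion $\eta \mapsto \eta'$ in part (3).
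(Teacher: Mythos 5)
The paper states this proposition without proof, treating it as a standard fact (it is essentially Proposition 10.8 and the surrounding material in Heinonen's \emph{Lectures on Analysis on Metric Spaces}), so there is no in-paper argument to compare against. Your proof is correct and is exactly the standard one: the monotone chaining for (1), the index swap plus $\eta^{-1}$ and reciprocation for (2), and the far-point trick plus the dual application to $f^{-1}$ for (3). The only loose ends, both harmless, are that the statement implicitly assumes $\diam(A)>0$, and that before applying the upper bound to $f^{-1}$ on $f(A)\subset f(B)$ you should note $f(B)$ is bounded --- which follows from the quasisymmetry inequality applied with a fixed pair of distinct points of $A$.
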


A metric space is called \emph{proper} if the closure of every ball is compact.

A metric space is called \emph{doubling} if there exists a universal constant $C \geq 1$ such that every subset of diameter $d$ can be covered by at most $C$ subsets of diameter at most $d/2$. Notice that a space is doubling implies that it is separable.

A metric space $X$ is called \emph{uniformly perfect} if there exists a universal constant $C \geq 1$ such that for each $x \in X$ and for each $r >0$, $B(x,r) \setminus \overline{B}(x,r/C)$ is nonempty whenever $X \setminus B(x,r)$ is nonempty. Uniform perfectness forbids isolated islands in a uniform manner. In other words, a metric space is uniformly perfect if for every empty metric annulus in the space, the ratio between the outer and inner radii is bounded from above.

A metric space is equipped with the \emph{intrinsic metric} if the distance between any two points equals the infimum length of all the paths joining them. A metric space whose metric is intrinsic is called a \emph{length space}. A \emph{geodesic} in a length space is a curve which is locally a distance minimizer(i.e., a shortest path).

We prove the following lemma in order to ``glue" quasisymmetries.

\begin{Lemma}\label{gqs}
	Let $(X, d_X)$,$(Y, d_Y)$ be two bounded length spaces and $f: X \to Y$ be a homeomorphism. Let $X = X_1 \cup X_2$ where $X_1$ and $X_2$ are closed subsets of $X$ such that $X_1 \cap X_2$ is connected, $\diam(X_1 \cap X_2) \neq 0$ and $f|_{X_1}$, $f|_{X_2}$ are $\eta$-quasisymmetries, then $f$ is an $\eta_1$-quasisymmetry where $\eta_1$ depends on $\eta$, $\diam(X_1)$, $\diam(X_2)$ and $\diam(X_1 \cap X_2)$.
\end{Lemma}

\begin{proof}
	We denote by
	\[
	\lambda \! : = \! \frac{1}{6}\min \! \left\{ \! \! \frac{\diam(X_1 \cap X_2)}{\diam(X_1)}, \! \frac{\diam(X_1 \cap X_2)}{\diam(X_2)}, \! \frac{\diam(f(X_1 \cap X_2))}{\diam(f(X_1))}, \! \frac{\diam(f(X_1 \cap X_2))}{\diam(f(X_2))} \!\! \right\}.
	\]
	
	Let $x,y,z$ be distinct points in $X$, then it is sufficient to prove that
	\[
	\frac{d_Y(f(x), f(y))}{d_Y(f(x), f(z))} \leq \eta_1 \left(\frac{d_X(x,y)}{d_X(x,z)} \right).
	\]
	We split the proof into three situations.
	
	Let $x, z \in X_1$ and $y \in X_2$. Since $X$ is a length space, $X = X_1 \cup X_2$ and $X_1,X_2$ are closed, there exists a point $w \in X_1 \cap X_2$ such that $d_X(x,y) = d_X(x,w) + d_X(w,y)$. 
	
	If $d_Y(f(x), f(w)) \geq \lambda \cdot d_Y(f(x), f(y))$, then
	\[
	\frac{d_Y(f(x), f(y))}{d_Y(f(x), f(z))} \leq \frac{1}{\lambda}\frac{d_Y(f(x), f(w))}{d_Y(f(x), f(z))} \leq \frac{1}{\lambda}\eta \left(\frac{d_X(x,w)}{d_X(x,z)} \right) \leq \frac{1}{\lambda}\eta \left(\frac{d_X(x,y)}{d_X(x,z)} \right).
	\]
	
	If $d_Y(f(x), f(w)) < \lambda \cdot d_Y(f(x), f(y))$, we choose a point $v$ on $X_1 \cap X_2$ such that $d_Y(f(y), f(w)) \geq d_Y(f(v), f(w)) \geq 3 \lambda d_Y(f(y), f(w))$. Notice that such a point exists due to the connectedness of $X_1 \cap X_2$ and the definition of $\lambda$. 
	
	Since $d_Y(f(x), f(w)) < \lambda \cdot d_Y(f(x), f(y))$, we have $(1-\lambda)d_Y(f(x), f(y)) \leq d_Y(f(y), f(w))$. Then
	\[
	\frac{d_Y(f(x), f(y))}{d_Y(f(x), f(z))} \leq \frac{1}{1-\lambda} \frac{d_Y(f(y), f(w))}{d_Y(f(x), f(z))} \leq \frac{1}{3 \lambda(1-\lambda)} \frac{d_Y(f(v), f(w))}{d_Y(f(x), f(z))}.
	\]
	
	Since $d_Y(f(x), f(w)) < \lambda \cdot d_Y(f(x), f(y))$ and $d_Y(f(v), f(w)) \geq 3 \lambda d_Y(f(y), f(w))$, we have $d_Y(f(v), f(w)) \geq 3 \lambda d_Y(f(y), f(w)) \geq 3\lambda\left(d_Y(f(x), f(y)) - d_Y(f(x), f(w))\right) \geq 3d_Y(f(x), f(w)) - 3\lambda d_Y(f(x), f(w)) = (3 - 3\lambda) d_Y(f(x), f(w))$. Then $d_Y(f(x), f(v)) \geq d_Y(f(v), f(w)) - d_Y(f(x), f(w)) \geq d_Y(f(v), f(w)) - 1/(3-3\lambda) \cdot d_Y(f(v), f(w)) \geq  1/2d_Y(f(v), f(w))$, and
	\begin{eqnarray*}
		\frac{d_Y(f(x), f(y))}{d_Y(f(x), f(z))} &\leq& \frac{2}{3 \lambda(1-\lambda)} \frac{d_Y(f(x), f(v))}{d_Y(f(x), f(z))} 
		\\
		&\leq& \frac{2}{3 \lambda(1-\lambda)} \eta \left(\frac{d_X(x,v)}{d_X(x,z)} \right) 
		\\
		&\leq& \frac{2}{3 \lambda(1-\lambda)}\eta \left(\frac{d_X(x,w) + d_X(w,v)}{d_X(x,z)} \right).
	\end{eqnarray*}
	
	Since
	\[
	\frac{d_Y(f(y), f(w))}{d_Y(f(v), f(w))} \leq \eta \left(\frac{d_X(y,w)}{d_X(v,w)} \right),
	\]
	we have $d_X(v,w) \leq 1/\eta^{-1}(1) \cdot d_X(y,w)$. We denote by $C : = \max\{1, 1/\eta^{-1}(1)\}$.
	
	Thus
	\begin{eqnarray*}
		\frac{d_Y(f(x), f(y))}{d_Y(f(x), f(z))} &\leq& \frac{2}{3 \lambda(1-\lambda)} \eta \left(\frac{d_X(x,w) + d_X(w,v)}{d_X(x,z)} \right) 
		\\
		&\leq& \frac{2}{3 \lambda(1-\lambda)} \eta \left(C\frac{d_X(x,w) + d_X(y,w)}{d_X(x,z)} \right) 
		\\
		&=& \frac{2}{3 \lambda(1-\lambda)} \eta \left(C\frac{d_X(x,y)}{d_X(x,z)} \right).
	\end{eqnarray*}
	
	Let $x,y \in X_1$ and $z \in X_2$. Similarly, there exists a point $w' \in X_1 \cap X_2$ such that $d_Y(f(x), f(z)) = d_Y(f(x), f(w')) + d_Y(f(w'), f(z))$. We also split the proof into two cases: $d_X(x,w') \geq \lambda \cdot d_X(x,z)$ or not. The proof of this situation is essentially the same as above.
	
	If $d_X(x,w') \geq \lambda \cdot d_X(x,z)$, then
	\[
	\frac{d_X(x,y)}{d_X(x,z)} \geq \lambda\frac{d_X(x,y)}{d_X(x,w')} \geq \lambda \eta^{-1} \left( \frac{d_Y(f(x), f(y))}{d_Y(f(x), f(w'))} \right) \geq \lambda \eta^{-1} \left( \frac{d_Y(f(x), f(y))}{d_Y(f(x), f(z))} \right).
	\]
	
	If $d_X(x,w') < \lambda \cdot d_X(x,z)$, we choose a point $v'$ on $X_1 \cap X_2$ such that $d_X(z,w') \geq d_X(v',w') \geq 3\lambda \cdot d_X(z,w')$. Notice that such a point exists due to the connectedness of $X_1 \cap X_2$ and the definition of $\lambda$. Then
	
	\[
	\frac{d_X(x,y)}{d_X(x,z)} \geq (1-\lambda) \cdot \frac{d_X(x,y)}{d_X(z,w')} \geq (3\lambda-3\lambda^2) \cdot \frac{d_X(x,y)}{d_X(v',w')}.
	\]
	
	Since $d_X(x,v') \geq d_X(v',w') - d_X(x,w') \geq d_X(v',w') - 1/(3-3\lambda) \cdot d_X(v',w') \geq  1/2d_X(v',w')$. Then
	\begin{eqnarray*}
		\frac{d_X(x,y)}{d_X(x,z)} &\geq& \frac{3\lambda-3\lambda^2}{2}\frac{d_X(x,y)}{d_X(x,v')}
		\\
		&\geq& \frac{3\lambda-3\lambda^2}{2} \eta^{-1} \left( \frac{d_Y(f(x), f(y))}{d_Y(f(x), f(v'))} \right) 
		\\
		&\geq& \frac{3\lambda-3\lambda^2}{2} \eta^{-1} \left( \frac{d_Y(f(x), f(y))}{d_Y(f(x), f(w'))+d_Y(f(w'), f(v'))} \right).
	\end{eqnarray*}
	
	Since
	\[
	\frac{d_Y(f(v'), f(w'))}{d_Y(f(z), f(w'))} \leq \eta \left(\frac{d_X(v',w')}{d_X(z,w')} \right),
	\]
	$d_Y(f(v'), f(w')) \leq \eta(1) \cdot d_Y(f(z), f(w'))$. We denote by $C' : = \max\{1, \eta(1)\}$.
	
	Thus
	\begin{eqnarray*}
		\frac{d_X(x,y)}{d_X(x,z)} &\geq& \frac{3\lambda-3\lambda^2}{2} \eta^{-1} \left( \frac{d_Y(f(x), f(y))}{d_Y(f(x), f(w'))+d_Y(f(w'), f(v'))} \right)
		\\
		&\geq& \frac{3\lambda-3\lambda^2}{2} \eta^{-1} \left(\frac{1}{C'} \frac{d_Y(f(x), f(y))}{d_Y(f(x), f(w'))+d_Y(f(z), f(w'))} \right)
		\\
		&=& \frac{3\lambda-3\lambda^2}{2} \eta^{-1} \left(\frac{1}{C'} \frac{d_Y(f(x), f(y))}{d_Y(f(x), f(z))} \right).
	\end{eqnarray*}
	
	We denote by $\eta_1(t) := C'  \eta \left(\frac{2}{3 \lambda(1-\lambda)} t \right)$, so it is the distortion function in this situation. 
	
	Let $x \in X_1$ and $y,z \in X_2$. The proof of this situation relies on the above results and it is verbatim the same. Similarly, let $w \in X_1 \cap X_2$ where $d_X(x,y) = d_X(x,w) + d_X(w,y)$ and $v \in X_1 \cap X_2$ where $d_Y(f(y), f(w)) \geq d_Y(f(v), f(w)) \geq 3 \lambda d_Y(f(y), f(w))$.
	
	If $d_Y(f(x), f(w)) \geq \lambda \cdot d_Y(f(x), f(y))$, then
	\[
	\frac{d_Y(f(x), f(y))}{d_Y(f(x), f(z))} \leq \frac{1}{\lambda}\frac{d_Y(f(x), f(w))}{d_Y(f(x), f(z))} \leq \frac{1}{\lambda}\eta_1 \left(\frac{d_X(x,w)}{d_X(x,z)} \right) \leq \frac{1}{\lambda}\eta_1 \left(\frac{d_X(x,y)}{d_X(x,z)} \right)
	\]
	where $\eta_1$ is the distortion function in the second situation.
	
	If $d_Y(f(x), f(w)) < \lambda \cdot d_Y(f(x), f(y))$, then
	\begin{eqnarray*}
	\frac{d_Y(f(x), f(y))}{d_Y(f(x), f(z))} &\leq& \frac{2}{3 \lambda(1-\lambda)} \frac{d_Y(f(x), f(v))}{d_Y(f(x), f(z))} 
	\\
	&\leq& \frac{2}{3 \lambda(1-\lambda)} \eta_1 \left(\frac{d_X(x,v)}{d_X(x,z)} \right) 
	\\
	&\leq& \frac{2}{3 \lambda(1-\lambda)} \eta_1 \left(\frac{d_X(x,w) + d_X(w,v)}{d_X(x,z)} \right).
\end{eqnarray*}
	
	Thus
	\[
	\frac{d_Y(f(x), f(y))}{d_Y(f(x), f(z))} \leq \frac{1}{3 \lambda(1-\lambda)} \eta_1 \left(\max\{1, 1/\eta^{-1}(1)\} \cdot \frac{d_X(x,y)}{d_X(x,z)} \right).
	\]
\end{proof}

\begin{Remark}
	Since a quasisymmetry can be extended to the completion of its domain(Proposition $10.11$ in \cite{He01}), our restriction of $X_1$ and $X_2$ to be closed can be weakened.
\end{Remark}

\begin{Remark}
	If $X_1$ and $X_2$ forms an open cover of a compact space $X$, Theorem $2.23$ in \cite{TV80} has completely solved this situation.
\end{Remark}

\begin{Remark}
	Lemma \ref{gqs} is also valid in more generality. For example, $X$ and $Y$ are not length spaces but quasi-geodesic spaces(i.e., every two point is connected by a path whose length is comparable to the distance of these two points), or $X$ is covered by subsets of any finite number satisfying some extra conditions in addition to Lemma \ref{gqs}.
\end{Remark}

\subsection{Gromov-Hausdorff distance and weak tangents of metric spaces}

Recall that the \emph{Hausdorff distance} between two subsets $X$ and $Y$ in the same ambient space, denoted by $d_H(X,Y)$, is defined by
\begin{equation}
d_H(X,Y) : = \inf\{r>0: X \subset N_r(Y) 
\ \textrm{and} \ Y \subset N_r(X)\}.
\end{equation}

The \emph{Gromov-Hausdorff distance} $d_{GH}(X,Y)$ of two metric spaces $X$ and $Y$ (they do not need to be in the same space), is defined by
\begin{equation}\label{GH}
d_{GH}(X,Y) : = \inf_{f,g}\{ d_H(f(X),g(Y))\}.
\end{equation}
where $f:X \to Z$ and $g:Y \to Z$ are isometric embeddings into some metric space $Z$.

A sequence of compact metric spaces $\{X_n\}_{n=1}^\infty$ converges in the \emph{Gromov-Hausdorff sense} to a compact metric space $X$ if $d_{GH}(X_n,X) \to 0$ as $n \to \infty$. We denote it by $X_n \xrightarrow{GH} X$. Notice that the limit space is unique up to isometries. If $X_n$ and $X$ are in the same ambient space, then $d_H(X_n, X) \to 0$ implies $X_n \xrightarrow{GH} X$.

A map $f: X \to Y$ is called an \emph{$\eps$-isometry} if
\begin{equation*}
\dist(f) : = \sup\{ |d_X(x_1,x_2) - d_Y(f(x_1),f(x_2))| :x_1, x_2 \in X \ \} \leq \eps
\end{equation*}
and
\begin{equation*}
d_H(f(X),Y) \leq \eps.
\end{equation*}

The following propositions play an important role in Gromov-Hausdorff convergence. See Section $7.3$ and $7.4$ of \cite{BBI01} for a reference.

\begin{Proposition}\label{epsisometry}
	Let $X$ and $Y$ be two metric spaces and $\epsilon >0$.
	\begin{enumerate}
		\item If $d_{GH}(X,Y) < \eps$, then there exists a $2\eps$-isometry from $X$ to $Y$.
		\item If there exists an $\eps$-isometry from $X$ to $Y$, then $d_{GH}(X,Y) < 2\eps$.
	\end{enumerate}
\end{Proposition}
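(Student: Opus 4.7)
My plan splits naturally along the two implications of the proposition.

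For part (1), I would unpack the definition of the Gromov--Hausdorff distance: the hypothesis $d_{GH}(X,Y) < \eps$ produces isometric embeddings $\iota_X : X \to Z$ and $\iota_Y : Y \to Z$ into some common ambient space $Z$ with $d_H(\iota_X(X), \iota_Y(Y)) < \eps$. Using the axiom of choice, for each $x \in X$ select $\phi(x) \in Y$ with $d_Z(\iota_X(x), \iota_Y(\phi(x))) < \eps$. The two conditions defining a $2\eps$-isometry then follow from direct triangle-inequality estimates in $Z$: the metric distortion bound is
\[
|d_X(x_1, x_2) - d_Y(\phi(x_1), \phi(x_2))| \leq d_Z(\iota_X(x_1), \iota_Y(\phi(x_1))) + d_Z(\iota_X(x_2), \iota_Y(\phi(x_2))) < 2\eps,
\]
and $d_H(\phi(X), Y) < 2\eps$ because any $y \in Y$ is already within $\eps$ of some $\iota_X(x)$, hence within $2\eps$ of $\iota_Y(\phi(x))$.

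For part (2), the idea is to glue $X$ and $Y$ along the $\eps$-isometry $f$ into a common (pseudo-)metric space $Z := X \sqcup Y$ with controlled Hausdorff distance. I would extend $d_X$ and $d_Y$ by setting
\[
d_Z(x, y) := \inf_{x' \in X} \bigl( d_X(x, x') + d_Y(f(x'), y) \bigr) + \tfrac{\eps}{2}
\]
for $x \in X$, $y \in Y$. After verifying that $d_Z$ is a pseudo-metric, taking $x' = x$ gives $d_Z(x, f(x)) \leq \eps/2$, and for each $y \in Y$ the density condition $d_H(f(X), Y) \leq \eps$ supplies an $x \in X$ with $d_Y(f(x), y) \leq \eps$, whence $d_Z(x, y) \leq 3\eps/2$. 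Thus $d_H(X, Y) \leq 3\eps/2 < 2\eps$ in $Z$, and passing to the metric quotient (identifying points at $d_Z$-distance zero) gives $d_{GH}(X, Y) < 2\eps$.

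The main obstacle is the pseudo-metric verification in (2), specifically the triangle inequality across mixed triples. The delicate case is
\[
d_Z(x_1, y) + d_Z(x_2, y) \geq d_X(x_1, x_2),
\]
which requires chaining through $f(x'), f(x'')$ and invoking the $\eps$-isometry bound $d_Y(f(x'), f(x'')) \geq d_X(x', x'') - \eps$; the resulting deficit of $\eps$ is exactly what the two $\eps/2$ buffers on the left absorb. The remaining mixed triangle inequalities reduce to the triangle inequalities in $X$ and $Y$ together with monotonicity of the infimum, and require no loss beyond the buffer.
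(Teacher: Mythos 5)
Your argument is correct and coincides with the standard proof of this proposition from Chapter 7 of \cite{BBI01}; the paper itself supplies no proof and simply defers to that reference, and both your part (1) (choosing near-neighbors inside a common ambient space and estimating by the triangle inequality) and your part (2) (gluing $X\sqcup Y$ along $f$ with an $\eps/2$ buffer, the deficit from $d_Y(f(x'),f(x''))\ge d_X(x',x'')-\eps$ being absorbed by the two buffers) are exactly the arguments used there. One pedantic point: $d_H(f(X),Y)\le\eps$ only guarantees, for each $y\in Y$, a point $x$ with $d_Y(f(x),y)<\eps+\delta$ for every $\delta>0$ rather than $\le\eps$ on the nose, but this still yields $d_H(X,Y)\le 3\eps/2+\delta<2\eps$ in $Z$ for $\delta$ small, so the conclusion is unaffected (and since cross-distances are bounded below by $\eps/2$, your $d_Z$ is in fact already a genuine metric and no quotient is needed).
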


\begin{Proposition}[Precompactness]\label{precompactness}
	Let $\mathfrak{X}$ be a collection of compact, uniformly bounded and doubling metric spaces. If the doubling constant of every element in $\mathfrak{X}$ is uniformly bounded, then any sequence of elements of $\mathfrak{X}$ contains a convergent subsequence in the Gromov-Hausdorff sense.
\end{Proposition}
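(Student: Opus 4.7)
The plan is to reduce the statement to the classical criterion that a family of compact metric spaces is precompact in the Gromov–Hausdorff topology if and only if it is \emph{uniformly totally bounded}, i.e.\ there is a common diameter bound $D$ and, for every $\eps > 0$, a uniform bound $N(\eps)$ on the cardinality of an $\eps$-net in each member. Once this criterion is in hand, the problem splits into (i) verifying the uniform total boundedness from the hypotheses, and (ii) extracting a subsequential limit.

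Step (i) is a direct iteration of the doubling condition. If $C$ is the common doubling constant and $D$ the common diameter bound, then each $X \in \mathfrak{X}$ can be covered by at most $C^k$ sets of diameter at most $D/2^k$; choosing $k = \lceil \log_2(D/\eps) \rceil$ and picking one point in each cover element yields an $\eps$-net in $X$ of cardinality at most $N(\eps) := C^{\lceil \log_2(D/\eps) \rceil}$, uniformly in $X$. This also ensures separability of each space and, more importantly, a uniform control on the combinatorics of finite approximations.

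For step (ii), given a sequence $\{X_n\} \subset \mathfrak{X}$, I would fix for each $k$ a $(1/k)$-net $S_n^k = \{x_{n,1}^k, \ldots, x_{n,N_k}^k\} \subset X_n$ with $N_k = N(1/k)$ independent of $n$ (repeating points if necessary), and arrange the nets to be nested: $S_n^k \subset S_n^{k+1}$. The pairwise distances $d_{X_n}(x_{n,i}^k, x_{n,j}^k)$ all lie in $[0, D]$, so a Cantor diagonal extraction produces a subsequence along which every such distance converges, coherently across all scales. Assembling the limits, one obtains a countable pseudometric space $\hat S = \bigcup_k \hat S^k$; quotienting by points at zero distance and completing yields a compact metric space $X_\infty$. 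Gromov–Hausdorff convergence $X_n \to X_\infty$ along the subsequence then follows from Proposition \ref{epsisometry} by constructing explicit $O(1/k)$-isometries $X_n \to X_\infty$ that send each point to the image of the nearest element of $S_n^k$; the distortion bound comes from convergence of net distances, and the density bound from the $(1/k)$-net property.

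The main technical obstacle is the bookkeeping for the nested nets: one must ensure that the identifications between $S_n^k$ and $S_n^{k+1}$ pass to the limit consistently, so that the limiting pseudometric on $\hat S$ is well defined and its completion is genuinely compact (this compactness is exactly where the uniform total boundedness pays off again, since each $\hat S^k$ is a $(1/k)$-net in $X_\infty$ by construction). Aside from this diagonal-argument housekeeping, every ingredient — doubling $\Rightarrow$ uniform total boundedness, the $\eps$-isometry criterion of Proposition \ref{epsisometry}, and the extraction of convergent distance matrices from $[0,D]^{N_k \times N_k}$ — is elementary.
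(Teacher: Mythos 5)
Your proposal is correct; the paper itself does not prove this proposition but quotes it from Chapter 7 of \cite{BBI01}, and your argument is precisely the standard proof given there: uniform doubling plus a uniform diameter bound yield uniform total boundedness, and then a Cantor diagonal extraction on finite $(1/k)$-nets produces a limiting pseudometric space whose completion is compact and is the Gromov--Hausdorff sublimit via the $\eps$-isometry criterion of Proposition \ref{epsisometry}. The two technical points you flag --- coherence of the nested nets under the diagonal extraction, and the fact that each limiting net $\hat S^k$ remains a $(1/k)$-net in $X_\infty$ so that the completion is genuinely compact --- are exactly the right ones, and both go through as you describe since minima over the finitely many net points commute with the limits of the distance matrices.
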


A \emph{pointed metric space} is a triple $(X, p, d_X)$ where  $(X, d_X)$ is a metric space with a base point $p \in (X, d_X)$. The \emph{pointed Gromov-Hausdorff convergence} is an analog of Gromov-Hausdorff convergence appropriate for non-compact spaces. A sequence of pointed metric spaces $\{(X_n, p_n, d_n)\}$ converges in the \emph{pointed Gromov-Hausdorff sense} to a complete metric space $(X, p, d_X)$ if for every $r > 0$ and $\eps > 0$ there exists a $n_0 \in \mathbb{N}$ such that for every $n > n_0$ there exists a map $f : B(p_n, r) \to X$ such that the following hold:
\begin{enumerate}
	\item $f(p_n) = p$;
	\item $\dist(f) < \eps$;
	\item $B(p, r-\eps) \subset N_\eps(f(B(p_n,r)))$.
\end{enumerate}
We also denote this type of convergence by $(X_n, p_n, d_n) \xrightarrow{GH} (X, p, d_X)$. Readers can verify that there is no difference between open and closed balls in the definition. Intuitively, the ball $B(p_n,r)$ in $X_n$ lies within the Gromov-Hausdorff distance of order $\epsilon$ from a subset of $X$ between the ball of radii $r-\epsilon$ and $r+\epsilon$ centered at $p$. We call a map which is an $\epsilon$-isometry and keeps the base point a \emph{pointed $\epsilon$-isometry}.

\begin{figure}[htbp]
	\centering
	\includegraphics[width=5in]{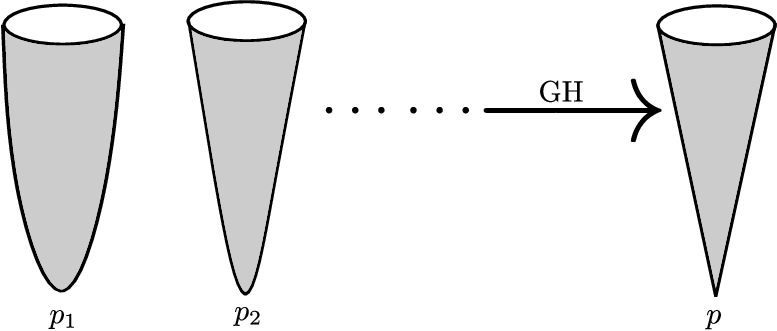}
	\caption{An illustration of pointed Gromov Hausdorff convergence.}
\end{figure}

The definition of pointed Gromov-Hausdorff convergence varies in different sources. We choose the one which balances intuition and generality. If $X$ is a length space, then our definition of Gromov-Hausdorff convergence implies the following one: $\overline{B}(p_n, r) \xrightarrow{GH} \overline{B}(p,r)$ for every $r >0$. Notice that we have abused notation slightly here. Although $\overline{B}(p_n, r)$ may not be compact; however, we can still show that $d_{GH}(\overline{B}(p_n, r), \overline{B}(p,r)) \to 0$. The next result is about the pointed Gromov Hausdorff limit of length spaces. Readers may see Section $8.1$ of \cite{BBI01} for a reference.

\begin{Theorem}\label{lspgh}
	Let $(X_n, p_n) \xrightarrow{GH} (X, p)$ where $X_n$ are length spaces and $X$ is complete. Then $X$ is a length space.
\end{Theorem}

Readers can find more information in Chapter $7$ and $8$ of \cite{BBI01}.

We denote by $f: (X,p, d_X) \to (Y,q,d_Y)$ a map between two pointed metric spaces which fixes the base points. Namely,  $f$ is a map between $(X, d_X)$ and $(Y, d_Y)$ and $f(p) = q$.

Let ${(X_n, p_n, d_n)}$ and ${(Y_n, q_n, l_n)}$ be sequences of pointed metric spaces that converge in the pointed Gromov-Hausdorff sense to $(X, p, d)$ and $(Y, q, l)$, respectively. Let ${f_n}$ be a sequence of mappings given by $f_n: (X_n, p_n, d_n) \to (Y_n, q_n, l_n)$ for each $n \in \mathbb{N}$ and let $f:(X, p, d) \to (Y, q, l)$. We say \emph{$f_n$ converges to $f$} if there exist pointed $\epsilon_n$-isometries
\[
\varphi^n : (X, p) \to (X_n, p_n) \qquad \textrm{and} \qquad \psi_n: (Y_n, q_n) \to (Y, q)
\]
such that the following holds:
\[
\lim_{n \to \infty} \psi_n \circ f_n \circ \varphi^n(x) = f(x)
\] 
for any $x \in X$ and $\epsilon_n \to 0$ as $n \to  \infty$. Naturally, we have $f(p) =q$.

Let $(X, d_X)$ be a metric space and $p$ be a point in $X$. We call a pointed metric space a \emph{weak tangent} of $X$ at $p$ if it is the pointed Gromov-Hausdorff limit of a sequence of spaces $\{(X, p_n, d_X/\lambda_n)\}$ where $\lambda_n \to 0$ and $p_n \to p$ in $X$. We denote by $(T_pX, p_\infty, g_p)$ the above weak tangent. A specific case of the weak tangents is when $p_n = p$ for all $n$, and we call such a weak tangent a \emph{proper weak tangent} at $p$.

We denote by $WT_p(X)$ and $WT(X)$ the collections of all weak tangents at $p$ of $X$ and all weak tangents of $X$, respectively. Similarly, we denote by $PWT_p(X)$ and $PWT(X)$ the collections of all proper weak tangents at $p$ of $X$ and all proper weak tangents of $X$, respectively.

A natural question is whether there exists at least one weak tangent at every point. The answer is given in the following proposition, which is Theorem $8.1.10$ in \cite{BBI01}.

\begin{Proposition}\label{psprecompactness}
	Let $\mathfrak{X}$ be a collection of pointed metric spaces. Suppose that for every $r > 0$ there exists a constant $C$ depending on $r$ such that for every $(X,p) \in \mathfrak{X}$, the ball $B(p,r)$ is $C$-doubling. Then any sequence of elements of $\mathfrak{X}$ contains a convergent subsequence in the pointed Gromov-Hausdorff sense. 
\end{Proposition}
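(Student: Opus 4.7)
The plan is to apply the compact (unpointed) Gromov--Hausdorff precompactness result of Proposition \ref{precompactness} to balls of fixed radius around the base points, and then run a Cantor diagonal argument across a sequence of radii $k \to \infty$, gluing the resulting limits into a single pointed limit space.

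Concretely, fix a sequence $\{(X_n, p_n, d_n)\} \subset \mathfrak{X}$. For each positive integer $k$, pass to the metric completion of the closed ball $\overline{B}(p_n, k)$, which, by hypothesis, is $C_k$-doubling with diameter at most $2k$; since bounded complete doubling spaces are compact, Proposition \ref{precompactness} produces a subsequence along which these balls Gromov--Hausdorff converge to a compact space $Y_k$. Because each $p_n$ lies in the ball, Proposition \ref{ptgh} lets me upgrade this to pointed convergence $(\overline{B}(p_n, k), p_n) \xrightarrow{GH} (Y_k, q_k)$ after a further subsequence. A standard diagonal extraction then yields one subsequence $\{n_j\}$ along which pointed convergence at radius $k$ holds simultaneously for every $k$.

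The next step is to assemble the spaces $(Y_k, q_k)$ into a single pointed limit $(X_\infty, p_\infty)$. By Proposition \ref{epsisometry} the pointed convergences at scale $k$ come with pointed $\eps$-isometries $\overline{B}(p_{n_j}, k) \to Y_k$; restricting these to radius $k' < k$ and letting $\eps \to 0$, the uniqueness of Gromov--Hausdorff limits up to isometry identifies $Y_{k'}$ with the closed $k'$-ball around $q_k$ in $Y_k$. The $Y_k$ thus form a compatible tower of pointed isometric embeddings, and I define $X_\infty$ as the metric completion of $\bigcup_k Y_k$ with $p_\infty = q_1$. Unwinding definitions, for any $r > 0$ and $\eps > 0$ I pick an integer $k > r + \eps$ and $j$ so large that there is a pointed $\eps$-isometry $\overline{B}(p_{n_j}, k) \to Y_k \subset X_\infty$; restricting to $B(p_{n_j}, r)$ verifies the three conditions in the definition of pointed Gromov--Hausdorff convergence, so $(X_{n_j}, p_{n_j}, d_{n_j}) \xrightarrow{GH} (X_\infty, p_\infty, d_\infty)$.

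The main technical obstacle is ensuring genuine compatibility between the limits at different scales along the \emph{same} diagonal subsequence: I need the $\eps$-isometries witnessing convergence at radius $k$ to restrict, modulo errors that vanish in the limit, to $\eps$-isometries at every smaller radius $k'$, so that the identifications $Y_{k'} \hookrightarrow Y_k$ are isometric embeddings rather than mere coarse ones. This is handled by the two-way correspondence in Proposition \ref{epsisometry} together with uniqueness of Gromov--Hausdorff limits; once this tower of compatible embeddings is in place, the construction of $X_\infty$ and the verification of pointed convergence are essentially bookkeeping.
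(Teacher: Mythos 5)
The paper does not actually prove this proposition: it is stated as a known analogue of Proposition \ref{precompactness} (Gromov's compactness theorem), with the reader referred to Chapter 7--8 of \cite{BBI01}. Your argument is precisely the standard textbook proof that the paper is implicitly invoking --- exhaust by balls of radius $k$, apply compact precompactness at each scale, diagonalize over $k$, and glue the compatible tower of limits into one pointed space --- and its overall structure is sound. Two points deserve more care than you give them. First, Proposition \ref{ptgh}(2) goes in the wrong direction for your step of locating the limit base point: it produces base points $p_n \in X_n$ given $p \in X$, whereas you need to push the \emph{given} $p_n$ forward into the compact limit $Y_k$ via the approximate isometries and extract a convergent subsequence of images; this is easy but is not the cited statement. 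Second, and more substantively, your identification of $Y_{k'}$ with \emph{the closed $k'$-ball} around $q_k$ in $Y_k$ is not correct as stated: the Gromov--Hausdorff limit of the balls $\overline{B}(p_n,k')$ is only sandwiched between $B(q_k,k')$ and $\overline{B}(q_k,k')$ (boundary points of the limit ball may fail to be limits of points of the approximating balls), which is exactly why the paper's definition of pointed convergence carries the $r-\eps$ slack. This does not sink the proof --- for the tower you only need base-point-preserving isometric embeddings $Y_{k'} \hookrightarrow Y_k$, obtained as limits of the inclusions $\overline{B}(p_n,k') \hookrightarrow \overline{B}(p_n,k)$ (in the spirit of Lemma \ref{upfci}), and the final verification of pointed convergence only uses the condition $B(p_\infty, r-\eps) \subset N_\eps(f(B(p_{n_j},r)))$ --- but the claim that ``uniqueness of limits identifies $Y_{k'}$ with the closed ball'' should be replaced by this weaker, correct statement.
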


Proposition \ref{psprecompactness} shows that there exist at least one weak tangent at every point of a doubling metric space. Proposition \ref{psprecompactness} is analogous to Proposition \ref{precompactness}. Roughly speaking, it requires the property of uniformly local doubling to induce precompactness.

\section{Quasisymmetric embeddability is hereditary}\label{QEIH}

In this section we first prove Theorem \ref{QSweakT}, then establish the following result: If $X$ is quasisymmetrically embedded into a normed vector space, then every element in $WT(X)$ is quasisymmetrically embedded into the same space, given that $X$ is proper and doubling. 

The proof of Theorem \ref{QSweakT} is mainly finished by next Lemma. See Lemma 2.4.7 in \cite{KL04}.

\begin{Lemma}[Keith, Laakso]\label{upfc}
	Let ${(X_n, p_n, d_n)}$ and ${(Y_n, q_n, l_n)}$ be sequences of proper pointed metric spaces that converge in the pointed Gromov-Hausdorff sense to $(X, p, d)$ and $(Y, q, l)$, respectively. Let $f_n : (X_n, p_n) \to (Y_n, q_n)$ be an $\eta$-quasisymmetric map for each $n \in \mathbb{N}$, where $\eta$ is fixed. If there exist a $C \geq 1$ and a sequence $\{x_n\}$, where each $x_n \in X_n$, such that
	\begin{equation}\label{dib}
	\frac{1}{C} \leq d_n(p_n, x_n) \leq C \qquad \textrm{and} \qquad \frac{1}{C} \leq l_n(q_n, f_n(x_n)) \leq C
	\end{equation}
	for every $n \in \mathbb{N}$. Then, after passing to a subsequence, we have $\{f_n\}$ converges to some $\eta$-quasisymmetric map $f$ between $X$ and $Y$ and $f(p) = q$.
\end{Lemma}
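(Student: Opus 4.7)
The plan is an Arzel\`a--Ascoli-style argument in the Gromov--Hausdorff setting: combine the pointed $\epsilon_n$-isometries supplied by the pointed GH convergence with the uniform quasisymmetric control on $f_n$ to extract a pointwise-convergent subsequence whose limit inherits the $\eta$-quasisymmetry. First I would choose pointed $\epsilon_n$-isometries $\varphi^n:X\to X_n$ and $\psi_n:Y_n\to Y$ with $\varphi^n(p)=p_n$, $\psi_n(q_n)=q$, and $\epsilon_n\to 0$, and set $g_n:=\psi_n\circ f_n\circ\varphi^n:X\to Y$. By construction $g_n(p)=q$ for every $n$, so any pointwise limit $f$ automatically satisfies $f(p)=q$.

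Next I would establish pointwise precompactness of $\{g_n(x)\}$ for each fixed $x\in X$. Applying the $\eta$-quasisymmetry of $f_n$ to the triple $(p_n,\varphi^n(x),x_n)$ and invoking (\ref{dib}) gives
\[
l_n(q_n,f_n(\varphi^n(x)))\leq\eta\!\left(\frac{d_n(p_n,\varphi^n(x))}{d_n(p_n,x_n)}\right)\cdot l_n(q_n,f_n(x_n))\leq C\,\eta\!\left(C\bigl(d(p,x)+\epsilon_n\bigr)\right),
\]
which together with the near-isometry of $\psi_n$ confines $g_n(x)$ to a bounded subset of $Y$; since $Y$ is proper, this set is relatively compact. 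A dual application of the quasisymmetric inequality (or equivalently an application to $f_n^{-1}$ via Proposition \ref{diam}) yields $l(q,g_n(x))\geq c(x)>0$ whenever $d(p,x)>0$, an essential nondegeneracy that will prevent any limit map from collapsing to the constant $q$.

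The core step is a diagonal extraction. I would choose a countable dense set $\{y_k\}\subset X$ (available since $X$, being proper, is separable) and, by the preceding precompactness plus Cantor's diagonal procedure, pass to a subsequence along which $g_n(y_k)\to f(y_k)$ in $Y$ for every $k$. For any $y_i,y_j,y_k$ in the dense set with $y_i\ne y_k$, the $\eta$-quasisymmetry of $f_n$ on $X_n$ reads
\[
\frac{l_n(f_n(\varphi^n(y_i)),f_n(\varphi^n(y_j)))}{l_n(f_n(\varphi^n(y_i)),f_n(\varphi^n(y_k)))}\leq\eta\!\left(\frac{d_n(\varphi^n(y_i),\varphi^n(y_j))}{d_n(\varphi^n(y_i),\varphi^n(y_k))}\right).
\]
The $\epsilon_n$-isometry properties turn $X_n$-distances between the $\varphi^n(y_\bullet)$'s into $X$-distances up to $O(\epsilon_n)$ and $Y_n$-distances between the images into $Y$-distances between the $g_n(y_\bullet)$'s up to $O(\epsilon_n)$; letting $n\to\infty$, the nondegeneracy bound keeps the denominator positive in the limit, and $f$ is $\eta$-quasisymmetric on the dense set.

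The step I expect to require the most care is extending $f$ to all of $X$ and then verifying the convergence $g_n(x)\to f(x)$ for \emph{every} $x\in X$. Using the quasisymmetric inequality with a fixed anchor point $y_0\ne x$ shows that $\{f(y_k)\}$ is Cauchy whenever $y_k\to x$ in the dense set, so completeness of $Y$ defines $f(x):=\lim_k f(y_k)$ unambiguously; density together with continuity of $\eta$ then propagates the $\eta$-quasisymmetric estimate from the dense set to all of $X$. For an arbitrary $x\in X$, I would approximate $x$ by a nearby $y_k$ and use the quasisymmetric bound on $X_n$ applied to the triple $(\varphi^n(x),\varphi^n(y_k),x_n)$ to make $l(g_n(x),g_n(y_k))$ uniformly small; combining this with $g_n(y_k)\to f(y_k)$ and $f(y_k)\to f(x)$ via the triangle inequality gives $g_n(x)\to f(x)$, completing the proof.
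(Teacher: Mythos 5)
Your proposal follows essentially the same route as the paper's proof: pointed $\epsilon_n$-isometries $\varphi^n$ and $\psi_n$, pointwise precompactness from the properness of $Y$ together with the normalization \eqref{dib}, a Cantor diagonal extraction over a countable dense subset, and equicontinuity to extend the limit map to all of $X$ and to pass the $\eta$-estimate to the limit; in fact you supply more detail than the paper, which cites Proposition $10.26$ of \cite{He01} for equicontinuity and leaves the verification that the limit is quasisymmetric to the reader. The one step needing a small repair is your final equicontinuity estimate anchored at $x_n$: the denominator $d_n(\varphi^n(x),x_n)$ can degenerate when $\varphi^n(x)$ happens to approach $x_n$, so one should instead anchor at whichever of $p_n$ or $x_n$ lies at distance at least $\frac{1}{2C}$ from $\varphi^n(x)$ (one of them always does, since $d_n(p_n,x_n)\geq \frac{1}{C}$), which is exactly the device behind the cited equicontinuity result and leaves the rest of your argument unchanged.
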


The proof is based on the equicontinuity of quasisymmetric maps and the Arzel\'a-Ascoli theorem. For the sake of completeness, we will illustrate a concise proof of the above lemma. Notice that our definitions of pointed Gromov-Hausdorff convergence and weak tangent are not the same as that in \cite{KL04}; however, the idea is still the same. We first introduce two definitions.

A family of mappings $f_n: (X_n, p_n, d_n) \to (Y_n, q_n, l_n)$ are \emph{equicontinuous on bounded subsets} if for any $R>0$, and for any $\eps>0$, there exists a $\delta>0$ such that
\[
l_n(f_n(x),f_n(x')) < \eps \ \textrm{whenever} \ x,x' \in B(p_n,R) \ \textrm{and} \ d_n(x,x') < \delta.
\]
A family of mappings $f_n: (X_n, p_n, d_n) \to (Y_n, q_n, l_n)$ are \emph{uniformly bounded on bounded sets} if for any $R > 0$,
\[
\sup_{n}\sup_{x,x' \in B(p_n,R)}l_n(f_n(x),f_n(x')) < \infty.
\]

\begin{proof}[Proof of Lemma \ref{upfc}]
	$\{f_n\}$ are equicontinuous on bounded subsets due to inequalities \eqref{dib} in Lemma \ref{upfc}, and the proof is analogous to Proposition $10.26$ in \cite{He01}. Similarly, inequalities \eqref{dib} in Lemma \ref{upfc} imply that $\{f_n\}$ are uniformly bounded on bounded sets.
	
	We first assume that $X$ and $Y$ are closed and bounded; thus it can be directly verified that $X_n$ and $Y_n$ should be uniformly bounded. Let's consider the following diagram:
	\[
	\xymatrix{
	X_n \ar[rr]^{f_n} && Y_n \ar[dd]^{\psi_n}
	\\
	\\
	X \ar[rr]_f \ar[uu]^{\varphi^n} && Y.
	}
	\]
	In this diagram, $\varphi^n$ and $\psi_n$ are pointed $\frac{1}{n}$-isometries from $X$ to $X_n$ and $Y_n$ to $Y$, respectively. We will construct a function $f$ such that the above diagram commutes when $n \to \infty$.
	
	Since $X$ is proper, it is compact and separable; thus we can select a dense countable subset $E$ of $X$. For any $x_1 \in E$, the sequence $\psi_n \circ f_n \circ \varphi^n(x_1)$ is bounded since $f_n$ is uniformly bounded on bounded sets. $Y$ is a proper space; thus there exists a subsequence $n_i$ such that $\psi_{n_i} \circ f_{n_i} \circ \varphi^{n_i}(x_1)$ converges to a point $y_1$ in $Y$. We define $f(x_1) := y_1$.  By Cantor's diagonal argument, there exists a subsequence $\{n_j\}$ such that we are able to define a function $f$ on $E$ where
	\[
	f(x) := \lim_{j \to \infty} \psi_{n_j} \circ f_{n_j} \circ \varphi^{n_j}(x)
	\]
	for any $x \in E$.  Combining this with the equicontinuity on bounded subsets, we can follow a proof analogous to the Arzel\'a-Ascoli theorem to define a function $f: (X, p, d) \to (Y, q, l)$ as the limit of $\{f_{n_j}\}$. Furthermore, $f$ is a homeomorphism.
	
	It requires some basic tricks of pointed $\epsilon$-isometries(see Proposition \ref{epsisometry}) to show that $f$ is a quasisymmetry. We still denote the subsequence $\{n_j\}$ as $\{n\}$ for convenience here. Roughly speaking, for any $x,y,z \in X$, there exists a $1/n$-isometry $\varphi_n : X \to X_n$ where the distance between any two points of $x,y,z$ is much larger than $1/n$. Similarly, we can also choose a $\psi_{n'}$ such that the distance between any two points of $f_n \circ \varphi^n(x), f_n \circ \varphi^n(y), f_n \circ \varphi^n(z)$ is much larger than $1/n'$. Finally, passing $x,y,z$ through $\psi_{n'} \circ f_n \circ \varphi^n$ and letting $n,n' \to \infty$ will show the three point condition of quasisymmetries. We leave the details to the readers.
	
	If $X$ and $Y$ are unbounded, we select bounded exhaustions $\{U_n\}$ and $\{V_n\}$ of $X$ and $Y$(namely, a sequence of bounded subsets of the ambient space whose union equals to the whole space), respectively, such that each $U_n$ is $\eta$-quasisymmetric equivalent to $V_n$. Applying Cantor's diagonal argument again, we finish the proof by taking a sub-limit.
\end{proof}
 
\begin{proof}[Proof of Theorem \ref{QSweakT}]
	
	Since $X$ is doubling, we assume that $\{(X, p_n, d_X/\lambda_n)\}$ converges to $T_pX$ and $q_n : = f(p_n)$. We split the proof into two situations: either $X$ is uniformly perfect or not.
	
	If $X$ is uniformly perfect, then there exists a constant $C \geq 1$ and a sequence of $x_n \in X$ such that
	\begin{equation}\label{up}
	\frac{\lambda_n}{C} \leq d_X(p_n, x_n) \leq \lambda_n.
	\end{equation}
	Let $\omega_n =d_Y(q_n, f(x_n))$. Thus we define $f_n:(X, p_n, d_X/\lambda_n) \to (Y, q_n, d_Y/\omega_n)$ by $f_n(x) := f(x)$. Notice that $\omega_n \neq 0$, $\omega_n \to 0$ as $n \to \infty$ and $f_n$ are still $\eta$-quasisymmetric.
	
	An application of Lemma \ref{upfc} on $\{f_n\}$ with the following inequalities
	\begin{equation}
	\frac{1}{C} \leq \frac{d_X}{\lambda_n}(p_n, x_n) \leq 1 \qquad \textrm{and} \qquad \frac{d_Y}{\omega_n}(q_n, f_n(x_n)) = 1
	\end{equation}
	finishes the proof.
	
	If $X$ is not uniformly perfect, we split the proof into two cases.
	
	Case $1$. Notice that $\{p_n\}$ and $\{\lambda_n\}$ are fixed. If inequality (\ref{up}) is true for some sequence $\{x_n\}$ and some constant $C \geq 1$, then the above argument finishes the proof.
	
	Case $2$. If not, every sequence of $x_n \in X$ should satisfy
	\begin{enumerate}
		\item $d_X(p_n, x_n)/\lambda_n \to 0$ when $d_X(p_n,x_n)/\lambda_n$ is uniformly bounded from above.
		
		\item $d_X(p_n, x_n)/\lambda_n \to \infty$ when $d_X(p_n,x_n)/\lambda_n$ is uniformly bounded from below.
	\end{enumerate}
	
	For any sequence $\{a_n\}$ where each $a_n \in B(p_n, \lambda_n)$, we have that $d_X(p_n, a_n)/\lambda_n < 1$ for every $n$. Thus $d_X(p_n, a_n)/\lambda_n \to 0$. For any sequence $\{b_n\}$ where each $b_n \in X \setminus \overline{B}(p_n, \lambda_n)$, we have that $d_X(p_n, b_n)/\lambda_n > 1$ for every $n$. Thus $d_X(p_n, b_n)/\lambda_n \to \infty$. $\partial B(p_n, \lambda_n)$ should be empty when $n$ is sufficiently large, otherwise inequality (\ref{up}) is true for some $\{x_n\}$ and some $C \geq 1$.
	
	Thus $(X, p_n, d_X/\lambda_n)$ is inside the complement of a metric annulus $B(p_n, R_n) \setminus \overline{B}(p_n, r_n)$ where $\lim\limits_{n \to \infty}R_n = \infty$ and $\lim\limits_{n \to \infty}r_n = 0$. It means that $(X, p_n, d_X/\lambda_n) \xrightarrow{GH} \{p\}$. 
	
	If we restrict $X$ and $Y$ to bounded subsets that contain of $p_n$ and $q_n$, respectively. It will not affect the weak tangents and a chosen of $\omega_n$ by $\lambda_n$ and Proposition 2.1 will lead to $(Y, q_n, d_Y/\omega_n) \xrightarrow{GH} \{q\}$.	 
\end{proof}

\begin{Remark}
	Lemma \ref{upfc} and Theorem \ref{QSweakT} are still valid in some other situations. Namely, they are also valid for maps that are isometric ($l_n(f_n(x),f_n(y)) = d_n(x,y)$), $r$-similar ($l_n(f_n(x),f_n(y)) = r \cdot d_n(x,y)$), and $L$-bi-Lipschitz ($1/L \cdot d_n(x,y) \leq l_n(f_n(x),f_n(y)) \leq L \cdot d_n(x,y)$). The proofs are verbatim the same.
\end{Remark}

We state the following generalized lemma without proof which will be repeatedly used in later contents:

\begin{Lemma}\label{upfci}
	Let ${(X_n, p_n, d_n)}$ and ${(Y_n, q_n, l_n)}$ be sequences of proper pointed metric spaces that converge in the pointed Gromov-Hausdorff sense to $(X, p, d)$ and $(Y, q, l)$, respectively. Let $f_n : (X_n, p_n) \to (Y_n, q_n)$ be an isometric/$r$-similar/$L$-bi-Lipschitz/$\eta$-quasisymmetric map for each $n \in \mathbb{N}$. Suppose there exist a $C \geq 1$ and a sequence $\{x_n\}$, where each $x_n \in X_n$, such that
	\begin{equation}
	\frac{1}{C} \leq d_n(p_n, x_n) \leq C \qquad \textrm{and} \qquad \frac{1}{C} \leq l_n(q_n, f(x_n)) \leq C
	\end{equation}
	for every $n \in \mathbb{N}$. Then, after passing to a subsequence, $\{f_n\}$ converges to some isometric/$r$-similar/$L$-bi-Lipschitz/$\eta$-quasisymmetric map $f$ between $X$ and $Y$ and $f(p) = q$.
\end{Lemma}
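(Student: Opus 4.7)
The plan is to run the proof of Lemma \ref{upfc} verbatim, modifying only the verification that the limit map $f$ inherits the relevant structural property. Recall that the proof of Lemma \ref{upfc} consists of three ingredients: equicontinuity of $\{f_n\}$ on bounded subsets of $X_n$, uniform boundedness of $\{f_n\}$ on bounded sets, and a Cantor diagonal/Arzel\`a--Ascoli construction of $f$ as the subsequential limit of $\psi_n \circ f_n \circ \varphi^n$, where $\varphi^n : X \to X_n$ and $\psi_n : Y_n \to Y$ are pointed $\epsilon_n$-isometries with $\epsilon_n \to 0$. Exactly this skeleton will work in the three new cases; the isometric/$r$-similar/$L$-bilipschitz conclusions replace quasisymmetry only in the final inheritance step.

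First I would check equicontinuity and local uniform boundedness in each of the three new cases. For an isometry (respectively an $r$-similarity), the identity $l_n(f_n(x), f_n(x')) = d_n(x,x')$ (resp.\ $= r\, d_n(x,x')$) trivially gives equicontinuity with $\delta = \eps$ (resp.\ $\delta = \eps/r$), and places $f_n(B(p_n,R))$ inside a ball of radius $R$ (resp.\ $rR$) centered at $q_n$ since $f_n(p_n) = q_n$. For an $L$-bilipschitz map the upper bilipschitz bound plays the same role, producing $\delta = \eps/L$ and a uniform bound $LR$. Hypothesis \eqref{dib} is automatic in these three cases and is included only so that the statement uniformly handles the quasisymmetric case as well. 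With these two ingredients, the Cantor diagonal argument from Lemma \ref{upfc} produces a subsequential pointed limit $f : (X, p, d) \to (Y, q, l)$ with $f(p) = q$.

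The core step is then to show that $f$ inherits the corresponding distance relation. For any $x, x' \in X$, write $x_n = \varphi^n(x)$ and $x'_n = \varphi^n(x')$, so that $d_n(x_n, x'_n) \to d(x,x')$ and $l(\psi_n(f_n(x_n)), \psi_n(f_n(x'_n))) \to l(f(x), f(x'))$, using that $\varphi^n$ and $\psi_n$ are pointed $\eps_n$-isometries with $\eps_n \to 0$. Passing to the limit in the relation satisfied by $f_n$ then yields the corresponding relation for $f$: equality of distances in the isometric case, a factor of $r$ in the $r$-similar case, and the two-sided bilipschitz estimate with the same constant $L$ (the quasisymmetric case is already settled by Lemma \ref{upfc}). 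The unbounded case is handled exactly as in Lemma \ref{upfc}, by exhausting $X$ and $Y$ by bounded subsets on which $f_n$ has the prescribed property and diagonalizing once more.

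The main subtlety, and it is minor, is keeping the constants $1$, $r$, and $L$ uniform as $n \to \infty$ so that they survive the limit; this is guaranteed by the hypothesis that the same constant governs every $f_n$. No new analytic obstacle arises beyond what was already overcome in the proof of Lemma \ref{upfc}.
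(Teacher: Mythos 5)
Your proposal is correct and follows exactly the route the paper takes: the paper simply remarks that the proofs of Lemma \ref{upfc} and Theorem \ref{QSweakT} carry over verbatim to the isometric, $r$-similar, and $L$-bilipschitz cases, which is precisely the plan you execute (and you helpfully spell out the equicontinuity, uniform boundedness, and limit-inheritance checks that the paper leaves implicit).
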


We are interested in a metric space which quasisymmetrically admits all weak tangents of its subsets, i.e., a metric space $X$ with the property that every weak tangent of every $Y \subset X$ can be uniformly quasisymmetrically embedded into $X$. We call such a space \emph{quasisymmetric-tangent-self-embeddable}. The following lemma proves that a finite dimensional normed vector space admits this property.

\begin{Lemma}\label{qsses}
	Let $Y$ be a subset of a finite dimensional normed vector space $(X, d)$. Then every weak tangent of $Y$ is (isometrically) a subset of $X$.
\end{Lemma}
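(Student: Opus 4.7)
The plan is to use the affine structure of $X$ to realize each rescaled pointed copy of $Y$ inside $X$ itself, and then extract a pointed Gromov--Hausdorff limit by compactness. Define $\phi_n : X \to X$ by $\phi_n(x) = (x - p_n)/\lambda_n$; since $(X, d)$ is a normed vector space, $\phi_n$ is an isometry from $(X, d/\lambda_n)$ onto $(X, d)$ with $\phi_n(p_n) = 0$. Setting $Y_n := \phi_n(Y)$, the rescaled pointed space $(Y, p_n, d/\lambda_n)$ is pointedly isometric to $(Y_n, 0, d)$ for every $n$, so it suffices to produce a closed subset $Y_\infty \subset X$ with $0 \in Y_\infty$ such that, after passing to a subsequence, $(Y_n, 0, d) \xrightarrow{GH} (Y_\infty, 0, d)$. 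By uniqueness of pointed Gromov--Hausdorff limits, the given weak tangent $T_pY$ will then be isometric to $(Y_\infty, 0, d)$ and hence realized as a subset of $X$.

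To construct $Y_\infty$, I would invoke compactness. Because $X$ is finite dimensional, every closed ball $\overline{B}(0, R) \subset X$ is compact, so by the Blaschke selection theorem the closed subsets of $\overline{B}(0, R)$ form a compact space under the Hausdorff metric. For each positive integer $R$, the closed sets $\overline{Y_n} \cap \overline{B}(0, R)$ therefore admit a Hausdorff-convergent subsequence, and a standard diagonal argument over $R = 1,2,3,\ldots$ yields one subsequence (still denoted $Y_n$) that Hausdorff-converges on every such ball. Let $Y_\infty \subset X$ be the Kuratowski--Painlev\'e limit of $Y_n$, i.e.\ the set of $y \in X$ that arise as $y = \lim y_{n_k}$ with $y_{n_k} \in Y_{n_k}$; it is closed, contains $0$, and on each ball $\overline{B}(0, R)$ agrees with the Hausdorff limit obtained above.

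Finally I would verify the pointed Gromov--Hausdorff convergence $(Y_n, 0, d) \to (Y_\infty, 0, d)$. Given $r > 0$ and $\eps > 0$, for all sufficiently large $n$ the Hausdorff distance in $X$ between $\overline{Y_n} \cap \overline{B}(0, r+1)$ and $Y_\infty \cap \overline{B}(0, r+1)$ is at most $\eps$. The map sending each $y \in B_{Y_n}(0, r)$ to a nearest point of $Y_\infty$ preserves the basepoint and has distortion at most $2\eps$, because source and target both sit isometrically inside the single space $(X, d)$; its image is $\eps$-dense in $B_{Y_\infty}(0, r - 2\eps)$ directly from the Hausdorff closeness. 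This verifies the three conditions in the definition of pointed Gromov--Hausdorff convergence, and hence $T_pY$ embeds isometrically into $X$. The only real obstacle is the diagonal-argument bookkeeping identifying the Kuratowski--Painlev\'e limit with the local Hausdorff limits on balls; the rest is routine once everything lives in the common ambient $(X, d)$.
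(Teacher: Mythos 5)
Your argument is essentially the paper's own proof: both realize the rescaled copies $(Y,p_n,d/\lambda_n)$ isometrically inside $X$ via affine dilations, extract a local Hausdorff limit on balls by compactness of closed subsets of compact sets (you cite Blaschke selection, the paper cites Theorem 7.3.8 of [BBI01]) together with a diagonal argument, and then identify this limit with the weak tangent by uniqueness of pointed Gromov--Hausdorff limits. The only differences are cosmetic (you recenter at $0$ while the paper keeps the base point at $p_n$), so the proposal is correct and matches the paper's approach.
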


\begin{proof}
	Since $X$ is finite dimensional, it is doubling and proper. In fact, doubling and proper are equivalent to finite dimension here. Let $T_pY$ be a weak tangent of $Y$ and $(Y, p_n, d/\lambda_n) \xrightarrow{GH} (T_pY, p_\infty, g_p)$. Let 
	\[
	\lambda_nY = \{x \in X : \textrm{there exists a} \ y \in Y \ \textrm{such that} \ x-p_n = \lambda_n(y-p_n) \}
	\]
	be a linear dilation of $Y$ at the point $p_n$. Thus $(\lambda_nY, p_n, d)$ is isometric to $(Y, p, d/\lambda_n)$ for any $\lambda_n$.
	
	We claim that there exists a subsequence $\{\lambda_{n_i}\}$ and a $Y_p \subset X$ such that for every $r > 0$, $\overline{B}(p_{n_i},r) \cap \lambda_{n_i} Y$ converges in the Hausdorff sense to $\overline{B}(p,r) \cap Y_p$. The proof is analogous to Proposition \ref{psprecompactness}. This is achieved by Theorem $7.3.8$ in \cite{BBI01} and Cantor's diagonal argument. Thus $(\lambda_{n_i}Y, p_{n_i}, d) \xrightarrow{GH} (Y_p, p, d)$. Since $(Y, p, d/\lambda_{n_i}) \xrightarrow{GH} (T_pY, p_\infty, g_p)$, we finish the proof by applying	 the isometric version of Lemma \ref{upfci}.
\end{proof}

The following corollary follows Theorem \ref{QSweakT} and Lemma \ref{qsses}.

\begin{Corollary}\label{nvs}
	Let $X$ be a proper, doubling metric space and $V$ be a finite dimensional normed vector space. If $X$ can be $\eta$-quasisymmetrically embedded into $V$, then any weak tangent of any subset of $X$ can be $\eta$-quasisymmetrically embedded into $V$.
\end{Corollary}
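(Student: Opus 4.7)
The plan is to combine Theorem \ref{QSweakT} with an infinite-dimensional extension of Lemma \ref{qsses}: the first yields a quasisymmetric equivalence between $T_pX$ and some weak tangent of the image $f(X) \subset V$, and the second realizes that weak tangent isometrically as a subset of $V$ (or its completion).

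First, I would apply Theorem \ref{QSweakT} to the $\eta$-quasisymmetric embedding $f\colon X \to f(X) \subset V$. Since doubling is a quasisymmetric invariant, $f(X)$ inherits the doubling property from $X$ with constant depending only on $\eta$ and on the doubling constant of $X$. To supply the missing properness hypothesis, I replace $f(X)$ by its closure $\overline{f(X)}$ in the completion $\hat V$ of $V$; a complete doubling space is automatically proper. Applied to $f\colon (X,p,d_X) \to (\overline{f(X)}, f(p), |\cdot|)$, Theorem \ref{QSweakT} then produces, for every $T_pX \in WT_p(X)$, a weak tangent $T_q\overline{f(X)} \in WT_{f(p)}(\overline{f(X)})$ together with an $\eta$-quasisymmetry $T_pX \to T_q\overline{f(X)}$.

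Second, I would realize $T_q\overline{f(X)}$ isometrically inside $\hat V$ by repeating the argument of Lemma \ref{qsses} but with uniform doubling playing the role that finite dimensionality plays there. If $(\overline{f(X)}, q_n, |\cdot|/\omega_n) \xrightarrow{GH} (T_q\overline{f(X)}, q_\infty, g_q)$ is the defining sequence, I set
\[
Z_n := \{\,q_n + \omega_n^{-1}(y - q_n) : y \in \overline{f(X)}\,\} \subset \hat V,
\]
so that $(Z_n, q_n, |\cdot|)$ is isometric to $(\overline{f(X)}, q_n, |\cdot|/\omega_n)$ and all $Z_n$ share a common doubling constant. For each fixed $r>0$ the closed balls $\overline{B}(q_n,r) \cap Z_n$ are therefore uniformly totally bounded subsets of the complete space $\hat V$, hence precompact in the Hausdorff topology. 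Blaschke's selection theorem combined with Cantor's diagonal argument over an exhausting sequence of radii extracts a subsequence whose Hausdorff limits on balls assemble into a closed subset $Y_q \subset \hat V$ with $(Z_{n_j}, q_{n_j}, |\cdot|) \xrightarrow{GH} (Y_q, q, |\cdot|)$. Applying Lemma \ref{upfci} to the identity maps (or invoking uniqueness of pointed Gromov-Hausdorff limits) identifies $(Y_q, q, |\cdot|)$ isometrically with $(T_q\overline{f(X)}, q_\infty, g_q)$. Composing this isometric realization with the $\eta$-quasisymmetry of the first step produces the desired $\eta$-quasisymmetric embedding of $T_pX$ into the target normed space.

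The main obstacle I anticipate is the infinite-dimensional extension of Lemma \ref{qsses}: properness of $V$ is no longer available, and one must rely entirely on uniform doubling of the dilated copies $Z_n$ to get precompactness of their bounded pieces in $\hat V$. A secondary subtlety is that the limit $Y_q$ lives a priori in $\hat V$ rather than $V$ itself; this is harmless when $V$ is complete (in particular Banach), and otherwise is absorbed by the standing convention that the embedding is permitted to take values in $V$ understood up to isometric completion.
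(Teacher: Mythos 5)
Your first step coincides with the paper's: pass to the image, note that doubling is a quasisymmetric invariant and that a complete doubling space is proper, and invoke Theorem \ref{QSweakT} to get an $\eta$-quasisymmetry from $T_pX$ onto some weak tangent of the image. The gap is in your second step, precisely at the point you flag as "the main obstacle" and then dismiss. You claim that because the dilated copies $Z_n$ share a common doubling constant, the sets $\overline{B}(q_n,r)\cap Z_n$ are precompact in the Hausdorff topology on subsets of $\hat V$, so that Blaschke selection applies. This conflates \emph{intrinsic} uniform total boundedness (uniform covering numbers) with \emph{extrinsic} precompactness as subsets of $\hat V$; Blaschke's theorem needs the sets to lie in a common compact subset of the ambient space, which uniform doubling does not provide when $\hat V$ is infinite dimensional. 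Concretely, the two-point sets $A_n=\{0,e_n\}\subset\ell^2$ are uniformly doubling and all lie in the closed unit ball, yet $d_H(A_n,A_m)=1$ for $n\neq m$, so no subsequence Hausdorff-converges. The same failure occurs for your $Z_n$: take $\overline{f(X)}=\{0\}\cup\{2^{-n}e_n:n\geq1\}\subset\ell^2$ (a proper, doubling set contained in no finite-dimensional subspace), $q_n=0$ and $\omega_n=2^{-n}$; then $Z_n\cap\overline{B}(0,1)$ contains $\{0,e_n\}$, and one checks $d_H\bigl(Z_n\cap\overline{B}(0,1),\,Z_m\cap\overline{B}(0,1)\bigr)\geq 1/2$ for $n\neq m$. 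So the closed set $Y_q$ you want as a Hausdorff limit need not exist, and the isometric realization of $T_q\overline{f(X)}$ inside $\hat V$ does not follow from your argument. (In this example the weak tangent does happen to embed isometrically into $\ell^2$, but not as a Hausdorff limit of the $Z_n$; your mechanism for producing the embedding is what breaks.)

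The paper takes a different route at exactly this juncture: it first asserts that the quasisymmetric image of a proper doubling space lies in a \emph{finite-dimensional} linear subspace $W\subset V$, and only then applies Lemma \ref{qsses} inside $W$, where properness of the ambient space is what makes the Hausdorff limits of the dilated copies exist. That reduction to finite dimensions (or some substitute for it, e.g.\ an Assouad-type re-embedding of the limiting tangent into a finite-dimensional subspace of $V$) is the idea your proposal is missing, and without it the passage from a Gromov--Hausdorff limit to an honest subset of $V$ collapses. I note in passing that the set $\{0\}\cup\{2^{-n}e_n\}$ above also shows that the paper's one-line justification of the finite-dimensional reduction should itself be treated with care, so if you pursue this corollary you should supply an argument for that reduction rather than citing it.
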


Corollary \ref{nvs} is still valid if we substitute the normed vector space with a quasisymmetric-tangent-self-embeddable space(with a little revision). However, we know very few examples of these spaces. Finite dimensional normed vector spaces are one example, and it is plausible that any quasi-self-symmetric space(see section \ref{LEIGE}) with a suitable ``extension" to infinity will be another.

\section{Counterexamples}\label{CE}

In this section, we will illustrate several counterexamples to show that the converse of Theorem \ref{QSweakT} is not true.

\subsection{Slit Sierpi\'nski carpets}
Let $R=(a_1,b_1) \times (a_2,b_2) \subset \mathbb{R}^2$ be a rectangle and let $s=\{x\} \times [c_2,d_2]\subset R$ be a \emph{slit} in $R$, where $l(s)=\mbox{diam}(s)=d_2-c_2$ is the \emph{length} of $s$. $R\backslash \bigcup_{i=1}^m s_i$, where $s_i$'s are mutually disjoint slits in $R$, is called a \emph{slit domain}.

We say that $\Delta\subset [0, 1]^2$ is a \emph{dyadic square of generation $n$} if there exist $i,j \in \{0,1,2,\ldots,2^n-1\}$ such that
\begin{align*}
\Delta=\left[\frac{i}{2^n},\frac{i+1}{2^n}\right] \times \left[\frac{j}{2^n},\frac{j+1}{2^n}\right].
\end{align*}
The sidelength of a dyadic square $\Delta$ will be denote by $l(\Delta)$.

Given an infinite sequence $\mathbf{r}=\{r_i\}_{i=0}^\infty$ such that $r_i\in(0,1)$, we define a slit domain $$\mathcal{S}_n(\mathbf{r}): =[0,1]^2\backslash \left( \bigcup_{i=0}^n\bigcup_{j=1}^{2^i} s_{ij}\right),$$
where
\begin{enumerate}
	\item $s_{ij}\subset \Delta_{ij}$, where $\Delta_{ij}$ is a dyadic square of generation $i$.
	\item The center of $s_{ij}$ coincides with the center of $\Delta_{ij}$.
	\item $l(s_{ij_1})=l(s_{ij_2})=r_i\cdot\frac{1}{2^i}$ for $j_1,j_2\in\{1,\ldots,2^i\}$.
\end{enumerate}

\begin{figure}[htbp!]
	\includegraphics[height=1.8in]{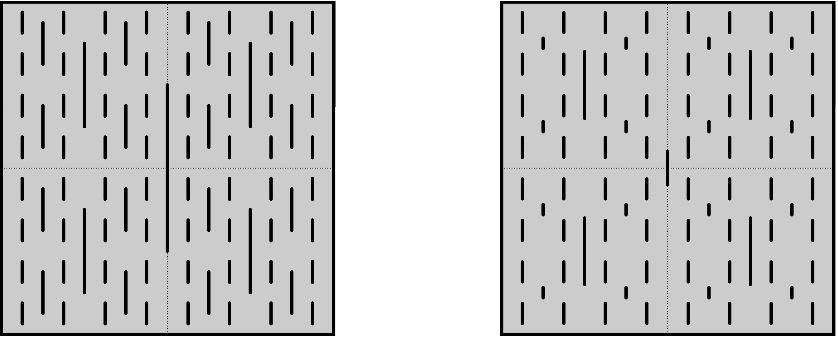}
	\caption{Slit domains with respect to $(\frac{1}{2},\frac{1}{2},\frac{1}{2},\frac{1}{2})$ and $(\frac{1}{10},\frac{2}{5},\frac{1}{8},\frac{1}{2})$.}
\end{figure}

We equip $\mathcal{S}_n(\mathbf{r})$ with the intrinsic metric and denote by $\mathscr{S}_n(\mathbf{r})$ the completion of $\mathcal{S}_n(\mathbf{r})$. One should be aware that the boundary of every slit is a topological circle after the completion.

For every $m, n \in \mathbb{N} \cup \{0\}$ with $m \leq n$ there exists a natural $1$-Lipschitz projection $$\pi_{m,n}: \mathscr{S}_n \to \mathscr{S}_m$$ obtained by identifying the points on the slits of $\mathscr{S}_n$ that correspond to the same point of $\mathscr{S}_m$.

As a topological space, the \emph{dyadic slit Sierpi\'nski carpet corresponding to $\mathbf{r}$} is defined as the inverse limit of the system $(\mathscr{S}_n,\pi_{m,n})$, and is denoted by $\mathscr{S}_{\mathbf{r}}$. More explicitly,
\begin{align}
\mathscr{S}_{\mathbf{r}} = \left\{ (p_0,p_1,\ldots) : p_i \in \mathscr{S}_i \mbox{ and } p_i=\pi_{i,i+1}(p_{i+1}) \right\}.
\end{align}
In the following content, we just use the notation $\S$ when there is no ambiguity.

The diameter of each $\mathscr{S}_n$ is clearly bounded by $2$. If $x = (x_0,x_1, \ldots )$ and $
y = (y_0, y_1, \ldots)$ are points in $\mathscr{S}$, we define a distance between them by
\[
d_{\mathscr{S}}(x,y)=\lim_{n\to\infty} d_{\mathscr{S}_n}(x_n,y_n)
\]
Since every $\pi_{m,n}$ is $1$-Lipschitz, $(d_{\mathscr{S}_n}(p_n, q_n))$ is a non-decreasing bounded sequence, and thus $d_{\mathscr{S}}(p, q)$ exists and defines a metric on $\mathscr{S}$. It can be directly verified that $d_\S$ is a length metric on $\S$.

 $(\mathscr{S}, d_{\S})$ is a metric Sierpi\'nski carpet, i.e., a metric space which is homeomorphic to the standard Sierpi\'nski carpet. When we mention a \emph{slit} in $\mathscr{S}$, we mean the boundary component of $\mathscr{S}$ which corresponds to a slit in the slit domain. For more information, readers may see \cite{HL} for a reference.

The dyadic slit carpets were first studied in \cite{Me10}, in which Merenkov investigated the carpet with respect to $\{r_i\}_{i = 0}^\infty$ where $r_i = 1/2$ for any $i$. 

The following theorem from \cite{HL} classifies the planar quasisymmetric embeddability of dyadic slit carpets.

\begin{Theorem}[Hakobyan, Li]\label{planarem}
	There exists a quasisymmetric embedding of $\mathscr{S}({\mathbf{r}})$ into $\mathbb{R}^2$ if and only if $\mathbf{r}=\{r_i\}_{i=0}^{\infty} \in \ell^2$.
\end{Theorem}

We call a dyadic slit carpet $\S(\mathbf{r})$ \emph{harmonic} if $\{r_i\} \notin \ell^2$ and $r_i \to 0$ as $i \to \infty$. For example, a dyadic slit carpet with respect to $\mathbf{r} = \{\frac{1}{\sqrt{i}}\}$ is harmonic. Theorem \ref{planarem} implies that there exists no quasisymmetric embedding from a harmonic slit carpet into $\mathbb{R}^2$.

\begin{Proposition}\label{scwte}
	Any weak tangent of a harmonic $\S$ is either the closed one quarter plane  $Q$, the closed half plane $H$, $\mathbb{R}^2$ or $T$, where $T$ is the completion of $\mathbb{R}^2 \setminus (0, \infty)$ equipped with the intrinsic metric.
\end{Proposition}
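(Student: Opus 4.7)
The plan is to analyze the pointed Gromov-Hausdorff limit of $(\mathscr{S}(\mathbf{r}), p_n, d/\lambda_n)$ for $\lambda_n \to 0$ and $p_n \to p$, separating two contributions: the effect of the ambient boundary $\partial[0,1]^2$ (which can produce an edge or corner at finite distance from $p_\infty$), and the effect of the slits themselves (which the harmonic hypothesis $r_i \to 0$ makes progressively thinner in the blow-up). Since $\mathscr{S}(\mathbf{r})$ is doubling and proper, Proposition \ref{psprecompactness} guarantees that I can always extract a convergent subsequence, so the task reduces to classifying the possible limits.

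First I would pass to a subsequence so that $\mathrm{dist}(p_n, \partial[0,1]^2)/\lambda_n$ and $\mathrm{dist}(p_n, V)/\lambda_n$, with $V$ the corner set, both converge in $[0, \infty]$; this pins down the rescaled ambient limit of $([0,1]^2, p_n, |\cdot|/\lambda_n)$ as one of $\mathbb{R}^2$, $H$, or $Q$. Next I would track the slits: a slit of generation $i$ has Euclidean length $r_i \cdot 2^{-i}$ and hence blow-up length $r_i \cdot 2^{-i}/\lambda_n$. Because $r_i \to 0$, for each $L>0$ only finitely many generations admit a slit of blow-up length at least $L$; combined with the $2^{-i}$-separation of same-generation slits, a diagonal extraction ensures that at most one slit $s_n$ survives in any fixed blow-up ball, with positive limit length $L_\infty$, and the rest collapse to points.

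The technical heart of the argument is to show that the vanishing slits do not affect the intrinsic distance in the limit: if the rescaled images $x_n, y_n$ converge to $x, y$ in the ambient limit, then $d(x_n, y_n)/\lambda_n \to |x - y|$. The lower bound is trivial since the intrinsic metric dominates the Euclidean one. For the upper bound I would take a generic straight line from $x_n$ to $y_n$ and perturb it past each obstructing slit with a detour at most that slit's length, then use $r_i \to 0$ together with the dyadic placement of slits to control the total detour. This step is the main obstacle: the crude bound by the sum of all slit lengths in a ball diverges because $\mathbf{r} \notin \ell^2$, so one must restrict attention to slits actually crossed by the chosen path---via a transversality argument on the line direction---for which the total contribution is summable and tends to $0$ with $\lambda_n$.

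Granted this metric comparison, the classification follows by cases. If no slit survives, the weak tangent coincides with the ambient limit and is $\mathbb{R}^2$, $H$, or $Q$. If the surviving slit $s_n$ has $L_\infty = \infty$, then when $p_n$ is asymptotically at one endpoint of $s_n$ (distance to that endpoint divided by $\lambda_n$ tends to $0$) the slit becomes a half-line ray from $p_\infty$ to infinity and the weak tangent is $T$; when $p_n$ lies asymptotically in the bulk of $s_n$ (both endpoints recede to infinity in the blow-up), the slit becomes a full line through $p_\infty$ which disconnects the plane, and the pointed Gromov-Hausdorff limit retains only the component containing $p_\infty$, giving $H$. This exhausts the four listed possibilities.
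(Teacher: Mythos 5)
Your overall strategy coincides with the paper's: blow up, argue that the harmonic condition $r_i \to 0$ degenerates the slits in the rescaled picture, and read off the limit from whatever survives (the ambient boundary of $[0,1]^2$ and at most one slit). The paper only sketches this, via the observation that no point of a harmonic $\S(\mathbf{r})$ is porous, and classifies the tangent by the position of the limit point $p$ (interior, slit point, slit endpoint, corner, outer boundary). You are in some respects more careful: you work with the actual definition of weak tangent, which allows moving base points $p_n \to p$ and arbitrary scales $\lambda_n \to 0$, and you correctly isolate the comparison between the rescaled intrinsic metric and the Euclidean metric as the real technical content, which the paper passes over entirely.

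There is, however, a concrete gap in your case analysis. You extract a surviving slit ``with positive limit length $L_\infty$'' and then only discuss the alternatives ``no slit survives'' and ``$L_\infty = \infty$''; the case $0 < L_\infty < \infty$ is never resolved. It is not vacuous: take $s_n$ to be a slit of generation $i_n \to \infty$, let $p_n$ lie over the center of $s_n$, and set $\lambda_n = r_{i_n}2^{-i_n}$, choosing the squares so that all other slits either recede (rescaled distance $\gtrsim 1/r_{i_n} \to \infty$) or have rescaled length tending to $0$. After passing to a subsequence this produces a weak tangent containing a doubled slit of length $1$ through the base point, i.e.\ the completion of $\mathbb{R}^2$ minus a unit segment in the intrinsic metric, which is homeomorphic to the complement of an open disk and hence is none of $Q$, $H$, $\mathbb{R}^2$, $T$. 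So this case must either be ruled out (it cannot be, under the paper's definition of weak tangent) or added to the list; your proof as written establishes a weaker classification than the one claimed. (The paper's sketch has the same blind spot, since classifying by the position of $p$ never confronts scales $\lambda_n$ commensurable with the length of a nearby slit.)

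Separately, the step you yourself call the main obstacle is not closed by the transversality argument you propose. A straight segment of rescaled length $\ell$ meets on the order of $\ell\lambda_n 2^{i}$ dyadic squares of generation $i$, hits the slit in a proportion comparable to $r_i$ of the slitted ones it meets, and pays a detour up to $r_i2^{-i}$ per hit, so the generation-$i$ contribution to the detour is governed by $r_i^2$ times the density of slitted squares; the resulting series involves $\sum_i r_i^2$, which is exactly the quantity that diverges for harmonic $\mathbf{r}$. Whether a generic straight line suffices therefore depends on slit-counting estimates you do not supply, and in general one needs a genuinely multi-scale path, or the metric estimates of \cite{HL19}, rather than a perturbed segment. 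This does not make the approach wrong, but as written the upper bound $d(x_n,y_n)/\lambda_n \to |x-y|$ is asserted rather than proved.
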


\begin{proof}
	The proof of Proposition \ref{scwte} is rather intuitive. A metric carpet $S$ is \emph{porous} at a point $p$ if there exists a constant $C > 1$ depending on $p$ such that for any $0 < r < \diam(X)$, there exists a boundary component $\Lambda$ of $S$ in $B(p,r)$ such that
	\[
	\frac{1}{C} < \frac{\diam(\Lambda)}{r} < C.
	\]
	
	Our proof is based on the idea that if $\S$ is harmonic, then $\S$ is not porous at any point. In simple words, for any $p \in \S$ and any $0 < r < \diam(X)$, the diameter of any slit inside $B(p,r)$ will decrease to $0$ as ``blowing up" the carpet near $p$; thus ``erase" all the slits inside $B(p_\infty,r)$. 
	
	We should first be aware that all the weak tangents of $\S$ are length spaces by Theorem \ref{lspgh}. Let $p$ be a point that is not on the boundary of $\mathscr{S}$. We define $\pi$ to be the natural projection from $\mathscr{S}$ to $[0,1]^2$ i.e., $\pi$ project any point on the slit carpet to the corresponding point on the unit square. For any $r>0, \lambda > 0$, let $f_n: =1/\lambda \cdot \pi|_{B(p,\lambda r)}$. Thus it is an embedding into $\mathbb{R}^2$ and satisfies 
	\begin{enumerate}
		\item $f(p) = p$;
		\item $\dist(f) < \eps$ for some $\eps$ depends on the length of the slits in $B(p,r)$;
		\item $B(p, r-\eps) \subset N_\eps(f(B(p,r)))$.
	\end{enumerate}
	Since the slit carpet is harmonic, the length of the slits in $B(p,r)$ and $\eps$ goes to $0$ as $\lambda \to 0$ . Thus by definition any proper weak tangent at $p$ is $\mathbb{R}^2$.
	
	We follow the same idea to construct pointed $\eps$-isometries, then it provides the following cases of proper weak tangents:
	
	\begin{enumerate}
		\item Any proper weak tangent at a corner point of $\S$ is $Q$.
		
		\item Any proper weak tangent at a non-corner point on the outer boundary(largest boundary) of $\S$ is $H$.
		
		\item Any proper weak tangent at an endpoint of a slit is $T$.
		
		\item Any proper weak tangent at a point on a slit which is not the endpoint is $H$. 
		
		\item Any proper weak tangent at a non-boundary point is $\mathbb{R}^2$.
	\end{enumerate}
	
	In next step, we will show that all the weak tangents of $\S$ should be $\{Q,H,\mathbb{R}^2, T\}$. Suppose that there exists $\{p_n\}$ s.t. $p_n \to p$ and $\left(\S, p_n, \frac{d_{\S}}{\lambda_n}\right)$ converges to a weak tangent $T_p\S$. If $n$ is sufficiently large, $\left(\S, p_n, \frac{d_{\S}}{\lambda_n}\right)$ is $\eps$-close to one of $\{Q,H,\mathbb{R}^2, T\}$ in the pointed Gromov-Hausdorff sense where $\eps$ only depends on $\mathscr{S}$ and $\lambda_n$ and $\{Q, H, \mathbb{R}^2, T\}$ are all distinct in this sense. Thus $\left(\S, p_n, \frac{d_{\S}}{\lambda_n}\right)$ should converges to one of them which finishes the proof.
\end{proof}

\begin{Corollary}\label{nonqsc}
	There exists a metric Sierpi\'nski carpet which is not quasisymmetrically embedded into $\mathbb{R}^2$ but every weak tangent is uniformly bi-Lipschitzly embedded into $\mathbb{R}^2$.
\end{Corollary}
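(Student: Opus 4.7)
The plan is straightforward. First, pick a concrete harmonic sequence, for instance $\mathbf{r}=\{(i+2)^{-1/2}\}_{i \geq 0}$, so that $\sum r_i^2 = \sum (i+2)^{-1} = \infty$ (hence $\mathbf{r} \notin \ell^2$), $r_i \to 0$, and $r_i \in (0, 1/\sqrt{2})$ lies in the required range. Let $X := \S(\mathbf{r})$. By Theorem \ref{planarem}, $X$ admits no quasisymmetric embedding into $\mathbb{R}^2$, which secures the negative half of the corollary.

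Next, invoke Proposition \ref{scwte}: every weak tangent of $X$ is isometric to one of the four model spaces $Q$, $H$, $\mathbb{R}^2$, or $T$. The first three are convex subsets of the Euclidean plane, so their intrinsic metrics coincide with the restrictions of $|\cdot|$, and the inclusion into $\mathbb{R}^2$ is a $1$-quasisymmetric embedding (distortion function $\eta_0(t)=t$). The only nontrivial case is $T$, the closure of the slit plane equipped with its intrinsic metric.

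For $T$, I would use the principal branch of the square root $\phi: T \to \overline{\mathbb{H}}$, defined in polar coordinates by $\phi(re^{i\theta})=\sqrt{r}\,e^{i\theta/2}$ with $\theta \in [0,2\pi]$, so the two edges of the slit $\theta=0$ and $\theta=2\pi$ map to the positive and negative real rays in $\partial \mathbb{H}$. This is a homeomorphism onto the closed upper half-plane, conformal in the interior. The key structural feature is the scaling equivariance $\phi(\lambda z) = \sqrt{\lambda}\,\phi(z)$: both $(T,d_{int})$ and $(\overline{\mathbb{H}},|\cdot|)$ are dilation-invariant. This reduces the quasisymmetry of $\phi$ to a uniform bi-Lipschitz estimate on a single fundamental annulus such as $\{z \in T : 1 \le |z| \le 2\}$, where $\phi$ is clearly bi-Lipschitz with absolute constants. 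Propagating the estimate via the dilations yields an absolute distortion function $\eta_T$ for $\phi$, and composing with the isometric inclusion $\overline{\mathbb{H}} \hookrightarrow \mathbb{R}^2$ gives the desired quasisymmetric embedding of $T$ into $\mathbb{R}^2$.

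Finally, setting $\eta' := \max(\eta_0, \eta_T)$ produces a single distortion function valid for all four model spaces, and hence for every weak tangent of $X$, which is exactly the uniform quasisymmetric embeddability claim. The main obstacle is the verification for $T$: since $(T,d_{int})$ is not globally bi-Lipschitz to any Euclidean domain (the intrinsic distance across the slit differs dramatically from the chordal distance near the tip), one must exploit the scale-equivariance of $\phi$ together with a local bi-Lipschitz estimate rather than trying to establish a global bi-Lipschitz equivalence.
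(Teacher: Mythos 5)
Your overall strategy matches the paper's: harmonic slit carpet via Theorem \ref{planarem} for non-embeddability, Proposition \ref{scwte} for the list of weak tangents, trivial embeddings for $Q$, $H$, $\mathbb{R}^2$, and then the whole content concentrated in the space $T$. Where you genuinely diverge is the treatment of $T$. The paper goes in the opposite direction, exhibiting the square map $\Phi: H \to T$, $re^{i\theta}\mapsto r^2e^{2i\theta}$, checking it is a conformal ($1$-quasiconformal) homeomorphism, and then invoking Corollary 4.8 of \cite{HK98}: a quasiconformal map from an Ahlfors $2$-regular Loewner space to an Ahlfors $2$-regular, linearly locally connected space is quasisymmetric. (The inverse is then quasisymmetric by Proposition \ref{diam}.) Your route — the square root $\phi: T \to \overline{\mathbb{H}}$ together with the dilation equivariance $\phi(\lambda z)=\sqrt{\lambda}\,\phi(z)$ — avoids the Loewner/LLC machinery entirely and is more elementary and self-contained; the paper's route avoids any explicit metric estimates at the cost of citing a substantial theorem.

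One caution about your key step: the claim that dilation equivariance ``reduces the quasisymmetry of $\phi$ to a uniform bi-Lipschitz estimate on a single fundamental annulus'' is not literally correct as a reduction. The quasisymmetry condition involves three points $x,y,z$ whose mutual distances may sit at wildly incommensurable scales (e.g.\ $d(x,y)$ huge and $d(x,z)$ tiny), so no single dilation places the configuration inside one annulus, and a bi-Lipschitz bound on $\{1\le|z|\le 2\}$ says nothing directly about such triples. To close this you should either (i) verify \emph{weak} quasisymmetry — if $d(x,y)\le d(x,z)$ then $|\phi(x)-\phi(y)|\le H|\phi(x)-\phi(z)|$, which \emph{does} normalize to essentially one scale via a dilation — and then invoke the standard theorem that weak quasisymmetry implies quasisymmetry between connected doubling spaces (Theorem 10.19 in \cite{He01}), or (ii) carry out the explicit multi-scale estimate, which is routine but is real work rather than pure propagation. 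A second, minor point: on your fundamental annulus the intrinsic distance between points on opposite edges of the slit is realized by paths that exit the annulus (they must round the tip at the origin), so the bi-Lipschitz estimate must be stated for the restriction of the ambient intrinsic metric of $T$, not the intrinsic metric of the annulus itself. Neither issue is fatal, but as written the argument for $T$ has a gap at exactly the point where all the difficulty lives.
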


\begin{proof}
	Let's take a harmonic dyadic slit carpet and what is left to show is the embeddability of $T$. We define a function $\Phi: T \to H$, which identifies the points by halfing the argument, and by showing it is a bi-Lipschitz map we finish the proof. More precisely, we define
	\begin{eqnarray*}
	\Phi : T & \longrightarrow & H
	\\
	r e^{i\theta} & \mapsto & r e^{i\frac{\theta}{2}}
	\end{eqnarray*}
	for $r \geq 0$ and $\theta \in (0, \pi)$. However, we do require that $\Phi(r e^{i\pi}) \neq \Phi(r e^{i0})$ for $r \neq 0$. Notice that $\partial T$ is a real line, thus we can define $\partial T = R_1 \cup R_2$ where both $R_1$ and $R_2$ represent $[0, +\infty)$ under the natural projection of $T$ to $\mathbb{R}^2$. We extend $\Phi$ to its boundary in a continuous and symmetric way.
	
	Let $(r,\theta)$ be a point that is represented in polar coordinate in $T$, then the Jacobian matrix of $\Phi$ at $(r,\theta)$ is
	\[
	\left(\begin{array}{ll}
	1 & 0\\
	0 & \frac{1}{2}
	\end{array}
	\right).
	\]
	It is clear that $\Phi$ is locally $L$-bi-Lipschitz at every point in $T \setminus \partial T$ for some $L \geq 1$. We will prove that $\Phi$ is bi-Lipschitz on $T \setminus \partial T$ in next step. 
	
	Let $x,y \in T \setminus \partial T$ and $\gamma$ be the geodesic connecting $x$ and $y$. Let $x_1, \ldots, x_{n+1}$ be points on $\gamma$ such that $\Phi$ is bi-Lipschitz on $\{x_{i}, x_{i+1}\}$ for any $1 \leq i \leq n$. For convenience, we take $x = x_1, y =x_{n+1}$. Thus
	\[
	|\Phi(x)-\Phi(y)| \leq \sum_{i=1}^n|\Phi(x_{i+1})-\Phi(x_i)| \leq \sum_{i=1}^{n}Ld_T(x_i,x_{i+1})  = Ld_T(x,y)
 	\]
	Similarly, we also have $d_T(x,y) \leq L |\Phi(x)-\Phi(y)|$.
	
	Extending $x,y$ to $R_1 \cup R_2$ with continuity finishes the proof.
\end{proof}

Let $\mathbf{S}=\{s_i\}_{i=0}^\infty$ be the collection of all slits in $\mathscr{S}(\mathbf{r})$. We denote by $\mathscr{P}_i$ the metric space generated by identically gluing three sides of two identical squares with sidelength $l(s_i)$. In this way, $\mathscr{P}_i$ looks like a ``square pillow" with an ``open mouth". We define $\widehat{\mathscr{S}}(\mathbf{r})$ as

\begin{equation}
\widehat{\mathscr{S}}(\mathbf{r}) := \mathscr{S}(\mathbf{r}) \bigsqcup_{\mathbf{G}} \{\mathscr{P}_i\}_{i=0}^\infty,
\end{equation}
where $\mathbf{G} = \{g_i\}$ is a sequence of gluing functions. Each $g_i$ in $\mathbf{G}$ identically glues the slit $s_i$ with $\partial \mathscr{P}_i$, the topological boundary of $\mathscr{P}_i$.

\begin{figure}[htbp]
	\centering
	\includegraphics[width=5in]{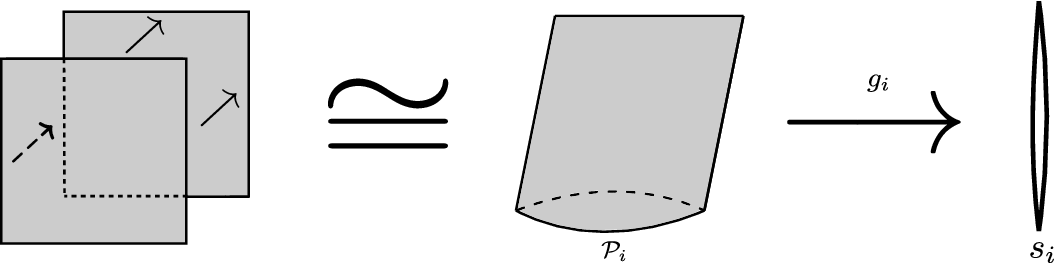}
	\caption{``Square pillow" $\mathscr{P}_i$, gluing function $g_i$ and slit $s_i$.}
	\label{squarepillow}
\end{figure}

$\widehat{\mathscr{S}}$ is homeomorphic to $\mathbb{D}$ and we equip it with intrinsic metric $d_{\widehat{\mathscr{S}}}$. In the original study of $\widehat{\mathscr{S}}$ in \cite{HL}, a more technical metric is equipped on $\widehat{\mathscr{S}}$; however it appears that these two metrics are bi-Lipschitz equivalent, which will not affect our following results. See details of this part in Section $8$ of \cite{HL}

Similarly, we call $\widehat{\mathscr{S}}$ \emph{harmonic} if the corresponding $\S$ is harmonic. Notice that $\S$ can be bi-Lipschitzly embedded into $\widehat{\mathscr{S}}$ by Section $8$ of \cite{HL}, thus any harmonic $\widehat{\mathscr{S}}$ can not be quasisymmetrically embedded into $\mathbb{R}^2$, otherwise $\mathscr{S}$ can be quasisymmetrically embedded into $\mathbb{R}^2$.

To study the weak tangents of $\widehat{\mathscr{S}}$, we introduce several new spaces. Recall that $Q  = \{(x,y) \in \mathbb{R}^2: x \geq 0, y\geq 0 \}$ and we define
\[
D= Q \bigsqcup_{d} Q.
\]
where $D$ is a length space and $d$ glues $\partial Q$ with $\partial Q$ identically.  Similarly,
\[
L = T \bigsqcup_{l} H 
\]
where $l$ glues $\partial T$ with $\partial H$ isometrically. Notice that both $\partial H$ and $\partial T$ are $\mathbb{R}$, so we can glue their boundaries by isometry. We also equip $L$ with the intrinsic metric.

\begin{Proposition}\label{eswte}
	Any weak tangent of a harmonic $\widehat{\mathscr{S}}$ is bi-Lipschitz equivalent to one of the following spaces: $Q$, $H$, $\mathbb{R}^2$, $D$ or $L$.
\end{Proposition}

\begin{proof}
		The proof of Proposition \ref{eswte} follows the same idea of the proof of Proposition \ref{scwte}. We just sketch the proof. Following the idea illustrated in Proposition \ref{scwte} to construct pointed $\eps$-isometries at each point, then it provides the following cases of proper weak tangents:
	
	\begin{enumerate}
		\item Any proper weak tangent at a corner point of $\widehat{\mathscr{S}}$ is $Q$.
		
		\item Any proper weak tangent at a non-corner point on the outer boundary of $\widehat{\mathscr{S}}$ is $H$.
		
		\item Any proper weak tangent at an endpoint of a slit is $L$.
		
		\item Any proper weak tangent at a point on a slit which is not the endpoint is $\mathbb{R}^2$. 
		
		\item Any proper weak tangent at a corner point of $\mathscr{P}_i$ which is not on a slit is $D$.
		
		\item Any proper weak tangent at other points of $\mathscr{P}_i$ is $\mathbb{R}^2$.
		
		\item Any proper weak tangent at a non-boundary point which is not on $\mathscr{P}_i$ is $\mathbb{R}^2$.
	\end{enumerate}
	
	We claim that all the weak tangents of $\widehat{\mathscr{S}}$ should be $\{Q,H,\mathbb{R}^2, D,L\}$. This still comes from the exhaustion of the local behavior of every point on $\widehat{\mathscr{S}}$ and $\{Q,H,\mathbb{R}^2, D,L\}$ are all distinct in the pointed Gromov-Hausdorff sense.
\end{proof}

Recall that a \emph{quasi-plane} or a \emph{quasi-sphere} is a metric space which is quasisymmetric to $\mathbb{R}^2$ or $\mathbb{S}^2$, respectively.

\begin{Corollary}\label{nonqsps}
	Let $X$ be a topological plane or a topological sphere. There exist a metric on $X$ such that $X$ is not a quasi-plane or a quasi-sphere, respectively, but every weak tangent of $X$ is quasisymmetric to $\mathbb{R}^2$.
\end{Corollary}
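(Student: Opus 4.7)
The plan is to realize the required metric on a topological plane (resp.\ sphere) by attaching flat Euclidean material to a harmonic $\mathscr{R}(\mathbf{r})$ so that the outer square boundary is absorbed into the interior of the new space without introducing any new species of weak tangent. For the plane, I would equip $\mathbb{R}^2$ with the intrinsic metric obtained by gluing the Euclidean exterior $\mathbb{R}^2 \setminus (0,1)^2$ to $\mathscr{R}(\mathbf{r})$ along $\partial[0,1]^2$ via the identity; for the sphere, I would take two copies of $\mathscr{R}(\mathbf{r})$ and glue them along their outer boundaries identically. In either case, $\mathscr{R}(\mathbf{r})$, and hence the harmonic slit carpet $\mathscr{S}(\mathbf{r})$, sits inside $X$ as a bi-Lipschitz subspace, since a path through the added material cannot detour around a slit essentially faster than a path in the carpet itself. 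Any hypothetical quasisymmetric equivalence $X \to \mathbb{R}^2$ (resp.\ $X \to \mathbb{S}^2$) therefore restricts to a quasisymmetric embedding of $\mathscr{S}(\mathbf{r})$ into $\mathbb{R}^2$, contradicting Theorem \ref{planarem}, so $X$ is not a quasi-plane (resp.\ quasi-sphere).

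For the weak tangents I invoke Proposition \ref{eswte}: every weak tangent of $\mathscr{R}(\mathbf{r})$ lies in $\{Q, H, \mathbb{R}^2, L, D\}$, and the tangents $Q, H$ occur only along $\partial[0,1]^2$. A direct pointed Gromov-Hausdorff calculation using the $\epsilon$-isometry technology of Proposition \ref{epsisometry} then shows that in the plane construction a non-corner outer point contributes $H$ glued to $H$ along the common boundary line, which is $\mathbb{R}^2$, and a corner contributes $Q$ glued to $\mathbb{R}^2 \setminus Q$, again $\mathbb{R}^2$; in the sphere construction a non-corner gives $H$ glued to $H$ and hence $\mathbb{R}^2$, while a corner gives $Q$ glued to $Q$, a flat cone of total angle $\pi$, which is precisely $D$. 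Thus every weak tangent of $X$ lies in $\{\mathbb{R}^2, L, D\}$.

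The final step is to check that each of $L$ and $D$ is quasisymmetric to $\mathbb{R}^2$. Both are flat Euclidean cones: $L = T \sqcup_l H$ has cone angle $2\pi + \pi = 3\pi$ at the apex (the $2\pi$ accumulated by going around the origin in the slit plane $T$ plus the $\pi$ contributed by the half-plane $H$), while $D = Q \sqcup_d Q$ has cone angle $\pi/2 + \pi/2 = \pi$. For any $\alpha > 0$ the map $re^{i\theta} \mapsto r^{2\pi/\alpha}\, e^{i 2\pi\theta/\alpha}$ is a homeomorphism from the cone of angle $\alpha$ onto $\mathbb{R}^2$, conformal away from the apex, and every such cone is Ahlfors $2$-regular, Loewner, and linearly locally connected, so Corollary 4.8 of \cite{HK98}---already invoked for $\Phi: H \to T$ in the proof of Corollary \ref{nonqsc}---upgrades conformality to quasisymmetry.

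The main obstacle will be to rule out any exotic weak tangents at the gluing locus. One has to verify that sequences $\{p_n\}$ approaching $\partial[0,1]^2$ from both sides at incommensurate rates, or approaching an outer corner through slit endpoints, still yield pointed Gromov-Hausdorff limits in the set $\{\mathbb{R}^2, L, D\}$. After rescaling, the Euclidean filler restricts to a half-plane (or three-quarter plane) on its side of the limit, and gluing it to the $\mathscr{R}(\mathbf{r})$-side limit reduces the analysis to finitely many cases, each of which is one of the cones already treated. This residual bookkeeping, rather than any deep geometric obstacle, is where the real work sits.
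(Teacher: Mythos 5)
Your proposal is correct and follows essentially the same route as the paper: exhibit the harmonic carpet $\mathscr{S}(\mathbf{r})$ as a metric subspace of $X$ to rule out quasisymmetric equivalence via Theorem \ref{planarem}, reduce the weak tangents to $\{\mathbb{R}^2, L, D\}$ using Proposition \ref{eswte}, and handle $L$ and $D$ by conformal cone maps together with Corollary 4.8 of \cite{HK98}, exactly as in the proof of Corollary \ref{nonqsc}. The only (cosmetic) difference is that for the plane the paper tiles all of $\mathbb{R}^2$ by copies of $\mathscr{R}(\mathbf{r})$ rather than gluing a single copy to the Euclidean exterior of the unit square; both choices produce the same list of tangents at the gluing locus.
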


\begin{proof}
	Let $\widehat{\mathscr{S}}$ be harmonic. If we replace every dyadic unit square in $\mathbb{R}^2$ by $\widehat{\mathscr{S}}$ and equip it with the intrinsic metric, we have the desired metric plane. Similarly, if we glue two copies of $\widehat{\mathscr{S}}$ isometrically by their outer boundaries and equip it with the intrinsic metric, we have the desired metric sphere. Thus in order to finish the proof of Corollary \ref{nonqsps}, it remains to show that $L$ and $D$ are quasisymmetric to $\mathbb{R}^2$. Revise the function $\Phi$ in Corollary \ref{nonqsc} and follow the same idea to finish the proof.
\end{proof}

\begin{Remark}
	In Section $5$ of \cite{Wu19}, Wu constructed a metric $\delta$ on $\mathbb{R}$, which is inspired by the ideas of Rickman's rug and the ``flat" snowflake curves, such that
	\[
	X_n = \left( \mathbb{R} \times \mathbb{R}^{n-1}, \sqrt{\delta^2+|  \ \cdot \ |^2}\right)
	\]
	is not quasisymmetric to $\mathbb{R}^n$, but every weak tangent of $X_n$ is isometric to $\mathbb{R}^n$ for $n \geq 2$.
\end{Remark}

\section{Self-quasisymmetricity and quasi-self-symmetricity}\label{LEIGE}

In this section we investigate the converse implication. The goal of this section is to explore the consequences when the weak tangents of a metric space $X$ are uniformly quasisymmetrically embedded into a metric space $Y$?

An appropriate space in this section should illustrate analogous structures between the local and the global scale; thus inspiring the following definition.

Let $X$ be a metric space and $p$ be a point in $X$. $X$ is \emph{$\eta$-self-quasisymmetric} at $p$ if there exists a $r_p > 0$ such that for any $0 < r < r_p$, there exists a subset $U \subset B(p,r)$ such that $U$ is $\eta$-quasisymmetric to $X$. The number $r_p$ is called the \emph{self-quasisymmetry scale} of $p$.

We say $X$ is \emph{homogeneously self-quasisymmetric} at $p$ if there exists a $C_p>0$ such that for any $0 < r < r_p$, there exists a subset $U$ as above satisfying $r/C_p \leq \diam(U) \leq C_p \cdot r$ and $p \in U$. If $X$ is self-quasisymmetric at a set of points, we say it is \emph{uniformly homogeneous} if $X$ is homogeneous at these points and the homogeneous constant $C_p$ and the self-quasisymmetry scale $r_p$ are uniformly bounded.

Notice that at any point of a self-similar space, it is homogeneously self-quasisymmetric. See Section $5$ of \cite{Hu81} for the definition of self-similarity. Our intuition tells us that self-similar spaces should be a target for a converse implication. In fact, we can show it works in more generality.

\begin{Theorem}\label{sqp}
	Let $X$ be a proper, doubling metric space and $p$ be a point in $X$. Assume that there exists a sequence of points $\{p_n\}$ such that $X$ is $\eta$-self-quasisymmetric at $p_n$ for every $n$ and $p_n \to p$ in $X$. Then there exists a weak tangent $T_pX \in WT_p(X)$ such that $X$ is $\eta'$-quasisymmetrically embedded into $T_pX$, where $\eta'(t) = 1/\eta^{-1}\!\!\left(\frac{1}{t}\right)$.
\end{Theorem}

\begin{proof}
	By definition, we can select a sequence of balls $B(p_n, r_n)$ such that $U_n \subset B(p_n,r_n)$, $r_n \to 0$, and there exists a sequence of $\eta$-quasisymmetric maps
	\[
	f_n: (U_n, q_n,\frac{d_X}{\lambda_n}) \to (X, p, d_X).
	\]
	where $q_n := f_n^{-1}(p)$ and $\lambda_n := \diam(U_n)$. It can be verified that $d_X(q_n, p_n) \to 0$, thus $q_n \to p$ in $X$. Notice that $U_n$ may not be open or unique.
	
	Without loss of generality, assume that $\lambda_n > 0$ for all $n \in \mathbb{N}$ and $\lambda_n \to 0$; otherwise, $X$ is a singleton. We may assume that, by Proposition \ref{psprecompactness}, $(X, q_n, \frac{d_X}{\lambda_n}) \xrightarrow{GH} (T_pX, p_\infty , g_p)$ after passing $\{\lambda_n\}$ to a suitable subsequence, where $T_pX$ is a weak tangent at $p$ . Similarly, $(U_n,q_n,\frac{d_X}{\lambda_n}) \xrightarrow{GH} (U_p, p_\infty , g_p)$ after passing to a suitable subsequence again. It can be directly verified by the isometric version of Lemma \ref{upfci} that $U_p$ is a subset of $T_pX$.
	
	It is trivial to see that $(X,p,d_X) \xrightarrow{GH} (\overline{X},p,d_X)$. Here $\overline{X}$ is the completion of $X$ since every pointed Gromov-Hausdorff limit is complete. In the following proof, we will show that $f_n$ subconverges to an $\eta$-quasisymmetry. 
	
	\[
	\xymatrix{
		(U_n, q_n,\frac{d_X}{\lambda_n}) \ar[rr]^{f_n} \ar[dd]_{GH} &\ar@{=>}[dd]& (X, p, d_X) \ar[dd]^{GH}
		\\
		\\
		(U_p, p_\infty , g_p) \ar[rr]_f  && (\overline{X},p,d_X).
	}
	\]
	
	We claim that there exists a sequence of points $x_n \in U_n$ such that Lemma \ref{upfc} holds for $\{f_n\}$. Take a point $x_1$ in $U_1$ such that $d_X(q_1,x_1)/\lambda_1 = C$ for some constant $C > 0$. We denote by $x_n : = f_n^{-1} \circ f_1(x_1)$. Since $f_n^{-1} \circ  f_1: (U_1, q_1, d_X/\lambda_1) \to (U_n, q_n, d_X/\lambda_n)$ is an $\eta_1$-quasisymmetry where $\eta_1$ depends only on $\eta$, by Proposition \ref{diam}, we have
	\begin{equation}
	\frac{d_X({q_n, x_n)}}{\diam(U_n)} \geq \frac{1}{2 \eta_1\left(\frac{\diam(U_1)}{d_X(q_1,x_1)}\right)}  = \frac{1}{2 \eta_1\left(1/C\right)}.
	\end{equation}
	Thus
	\begin{equation}\label{cp}
	\frac{1}{2 \eta_1\left(1/C\right)} \leq \frac{d_X}{\lambda_n}(q_n,x_n) \leq 1 \qquad \textrm{and} \qquad d_X\left(f_n(q_n), f_n(x_n)\right) = d_X\left(p, f_1(x_1)\right).
	\end{equation}
	
	Since $(U_n,q_n,d_X/\lambda_n) \xrightarrow{GH} (U_p, p_\infty , g_p)$ and $(X,p,d_X) \xrightarrow{GH} (\overline{X},p,d_X)$, applying Lemma \ref{upfc} to $\{f_n\}$ induces an $\eta$-quasisymmetry between $U_p$ and $\overline{X}$. This finishes the proof.
\end{proof}

Theorem \ref{sqe} is a direct corollary of Theorem \ref{sqp}. We can further strengthen Theorem \ref{sqp} if we add more conditions.

\begin{Proposition}\label{lsqp}
	Let $X$ be a proper, doubling metric space and $p$ be a point in $X$. Assume that there exists an open neighborhood $V$ of $p$ such that $X$ is uniformly homogeneously $\eta$-self-quasisymmetric at a dense subset of $V$, then $X$ is $\eta'$-quasisymmetrically embedded into every weak tangent in $WT_p(X)$, where $\eta'(t) = 1/\eta^{-1}\!\!\left(\frac{1}{t}\right)$.
\end{Proposition}

\begin{proof}
	Let $(X, p_n, d_X/\lambda_n) \xrightarrow{GH} (T_pX, p_\infty , g_p)$. Without loss of generality, we assume that $\lambda_n$ is smaller than the self-quasisymmetry scale and $B(p_n, \lambda_n) \subset V$ for all $n \in \mathbb{N}$. There exists a sequence of subsets $\{U_n\}$ as in the proof of Theorem \ref{sqp} such that $U_n \subset B(p'_n, \lambda_n/2) \subset B(p_n, \lambda_n)$, $\diam(U_n)$ is comparable to $\lambda_n$ and $p'_n \in U_n$. Here $p'_n$ is a point at which $X$ is $\eta$-self-quasisymmetric such that $p'_n$ is sufficiently close to $p_n$. 
	
	Without loss of generality, we assume that $p_n \in U_n$; otherwise, we can construct a $U_n$ in the following way: Take a sequence of points $\{p'_{n_j}\}$ at which $X$ is $\eta$-self-quasisymmetric such that $\{p'_{n_j}\}$ converges to $p_n$. Then the corresponding $U_{n_j}$  converges in the pointed Hausdorff sense to a subset $U_n$ after passing to a suitable subsequence. Thus $U_n$ is $\eta$-quasisymmetric to $\overline{X}$.  Since  $p'_{n_j} \in U_{n_j}$ and $U_n$ is closed, we have $p_n \in U_n$. See Lemma \ref{qsses} for a similar proof.
	
	Following the above construction, we may assume that for every $n \in \mathbb{N}$, there exists an $\eta$-quasisymmetry $f_n : (U_n, d_X/\lambda_n) \to (\overline{X}, d_X)$. Notice that
	\[
	(U_n,p_n,\frac{d_X}{\lambda_n}) \xrightarrow{GH} (U_p, p_\infty , g_p) \subset (T_pX, p_\infty , g_p)
	\]
	after passing $\{\lambda_n\}$ to a suitable subsequence. Since $\overline{X}$ is complete, there exists a sequence of base points $\{q_n\} \subset \overline{X}$ where $q_n := f_n(p_n)$ such that $(\overline{X}, q_n, d_X) \xrightarrow{GH} (\overline{X}, q, d_X)$ for some $q \in \overline{X}$ after passing to a suitable subsequence again. Following the same idea of the proof of Theorem \ref{sqp} and applying Lemma \ref{upfc} on $\{f_n\}$ finish the proof.
\end{proof}

\begin{Remark}
	If we consider $PWT_p(X)$ instead of $WT_p(X)$, the above result can be even more strengthened. Assume that $X$ is homogeneously $\eta$-self-quasisymmetric at $p$, then $X$ is $\eta'$-quasisymmetrically embedded into every weak tangent in $PWT_p(X)$, where $\eta'(t) = 1/\eta^{-1}\!\!\left(\frac{1}{t}\right)$.
\end{Remark}

Self-quasisymmetricity is a rather restrictive property on metric spaces. The following definition is more practical and could be applied in many other fields. 

$X$ is a quasi-self-symmetric space if, roughly speaking, almost every small piece of $X$ can be uniformly quasisymmetrically mapped back into $X$ with a bounded size. Let $X$ be a metric space and $p$ be a point in $X$. $X$ is \emph{quasi-self-symmetric} at $p$ if there exist $r_p >0, L_p \geq 1$ such that for any $0 < r < r_p$, there exists an $\eta$-quasisymmetric map $f$ maps $B(p,r)$ into $X$ where
\begin{equation}\label{dist}
\frac{1}{L_p} \leq \diam(f(B(p,r))) \leq L_p.
\end{equation}
We call $r_p$ the \emph{quasi-self-symmetric scale} and $L_p$ the \emph{distortion bound} of $p$. Notice that they may not be unique.

We say a nonempty metric space $X$ is \emph{$\eta$-quasi-self-symmetric} if there exists a dense subset $X_{qss}$ of $X$ such that $X$ is $\eta$-quasi-self-symmetric at every point in $X_{qss}$ and the quasi-self-symmetric scales, the distortion bounds of these points are uniformly bounded. Thus we can define $r_0 = \sup_{p \in X_{qss}} r_p$ as the \emph{quasi-self-symmetric scale} of $X$ and $L_0 = \sup_{p \in X_{qss}} L_p$ as the \emph{distortion bound} of $X$.

We say a nonempty metric space $X$ is \emph{weakly $\eta$-quasi-self-symmetric} if there exists a dense subset $X_{qss}$ of $X$ such that $X$ is $\eta$-quasi-self-symmetric at every point in $X_{qss}$ and only the distortion bounds of these points are uniformly bounded.

In the following theorem, we show that quasi-self-symmetricity induces a converse implication, but not in the global sense.

\begin{Theorem}\label{wttob}
	Let $X$ be a compact, proper, doubling, $\eta$-quasi-self-symmetric space, $p$ be a point in $X$ and $T_pX$ be a weak tangent in $WT_p(X)$. Then there exists a ball $B(q, r)$ in $X$ and an $\eta'$-quasisymmetric embedding $f: B(q, r) \to T_pX$ such that $f(q) = p_\infty$ and $\eta'(t) = 1/\eta^{-1}\!\!\left(\frac{1}{t}\right)$. If $X$ is uniformly perfect, $r$ depends only on $X$ and $\eta$.
\end{Theorem}

\begin{proof}
	Let's assume that $(X, p_n, d_X/\lambda_n) \xrightarrow{GH} (T_pX, p_\infty , g_p)$. The proof of this theorem is analogous to the proof of Theorem \ref{QSweakT} and Theorem \ref{sqp}. We split it into two situations: either $X$ is uniformly perfect or not.
	
	Let $X$ be a uniformly perfect metric space. Without loss of generality, assume that $\lambda_n < r_0$ for all $n \in \mathbb{N}$. Here $r_0$ is the quasi-self-symmetric scale of $X$. Since $X$ is  uniformly perfect, there exists a point $x_n \in \overline{B}(p_n, \lambda_n)$ and a constant $C \leq 1$ depending only on $X$ such that 
	\begin{equation}\label{lespc}
	C \leq \frac{1}{\lambda_n}d_X(p_n, x_n) \leq 1
	\end{equation}
	for any $n \in \mathbb{N}$. Let $\{p'_n\}$ be a sequence of points at which $X$ is $\eta$-quasi-self-symmetric and $d_X(p'_n, p_n) < C/2 \cdot \lambda_n$. The reason we select $p'_n$ is that $X$ may not be quasi-self-symmetric at $p_n$ but $p_n$ can be approximated by such a point.
	
	\[
	\left(\overline{B}(p'_n, \lambda_n), p_n, \frac{d_X}{\lambda_n}\right) \xrightarrow{GH} (B_p, p_\infty , g_p)
	\]
	after passing $\{\lambda_n\}$ to a suitable subsequence, where $B_p$ is a subset of $T_pX$ due to  Lemma \ref{upfci}. Notice that $\diam(\overline{B}(p'_n, \lambda_n))/\lambda_n$ maybe less than $2$ because there may not exist any point near $\partial B(p'_n, \lambda_n)$. Since $d_X(p_n,x_n) \geq C \cdot \lambda_n$ and $d_X(p'_n, p_n) < C/2 \cdot \lambda_n$, we have that $C/2 \leq \diam(B(p'_n, \lambda_n))/\lambda_n \leq 2$ 
	
	Since $X$ is compact and quasisymmetries extend to the completions(Proposition $10.11$ in \cite{He01}), there exists a sequence of $\eta$-quasisymmetric maps $f_n: \left(\overline{B}(p'_n, \lambda_n), p_n, d_X/\lambda_n\right) \to (U_n, q_n, d_X)$ such that $1/L_0 \leq \diam(U_n) \leq L_0$, where $q_n := f_n(p_n)$, $U_n \subset X$ and $L_0$ is the distortion bound of $X$.
	
	Since $X$ and $\{U_n\}$ are compact, $U_n$ converges in the Hausdorff sense to a compact set $U_q \subset X$ after passing $\{U_n\}$ to a suitable subsequence. Readers may see Theorem $7.3.8$ in \cite{BBI01} for a reference. Furthermore, we may assume that $q_{n} \to q \in U_q$ after passing to a suitable subsequence again. Notice that (pointed) Hausdorff convergence implies (pointed) Gromov-Hausdorff convergence. It can be directly verified that $(U_{n}, q_{n}, d_X) \xrightarrow{GH} (U_q, q, d_X)$.
	
	Thus we have an $\eta$-quasisymmetry $f_n: \left(\overline{B}(p'_n, \lambda_n), p_n, \frac{d_X}{\lambda_n}\right) \to (U_n, q_n, d_X)$ for every $n \in \mathbb{N}$, where
	\[
	\left(\overline{B}(p'_n, \lambda_n), p_n, \frac{d_X}{\lambda_n}\right) \xrightarrow{GH} (B_p, p_\infty , g_p)
	\]
	and
	\[
	 (U_n, q_n, d_X) \xrightarrow{GH} (U_q, q, d_X).
	\]
	It remains to show that $f_n$ subconverges to an $\eta$-quasisymmetry.
	
	\[
	\xymatrix{
		\left(\overline{B}(p'_n, \lambda_n), p_n, \frac{d_X}{\lambda_n}\right) \ar[rr]^{f_n} \ar[dd]_{GH} &\ar@{=>}[dd]&  (U_n, q_n, d_X) \ar[dd]^{GH}
		\\
		\\
		(B_p, p_\infty , g_p) \ar[rr]_f  && (U_q, q, d_X).
	}
	\]
	
	Since
	\[
	C \leq \frac{1}{\lambda_n}d_X(p_n, x_n) \leq 1,
	\]
	it follows Proposition \ref{diam} that
	\[
	\frac{1}{2\eta\left( \frac{2}{C} \right)} \leq \frac{1}{2\eta\left( \frac{\diam(B(p'_n, \lambda_n))}{d_X(p_n,x_n)} \right)} \leq \frac{d_X(q_n, f_n(x_n))}{\diam(U_n)} \leq \eta\left( \frac{2d_X(p_n,x_n)}{\diam(B(p'_n, \lambda_n))} \right) \leq \eta\left( \frac{4}{C} \right).
	\]
	Thus
	\begin{equation}\label{lespcf}
	\frac{1}{2\eta\left( \frac{2}{C} \right) L_0} \leq d_X(q_n, f_n(x_n)) \leq \eta\left( \frac{4}{C} \right)L_0.
	\end{equation}
	
	Apply Lemma \ref{upfc} to $f_n$, we have that $\{f_n\}$ subconverges to an $\eta$-quasisymmetric map $f': (B_p, p_\infty) \to (U_q, q)$. Since $B_p \subset T_pX$, $U_q$ is $\eta'$-quasisymmetrically embedded into $T_pX$. Furthermore, if $r < 1/(2\eta\left( \frac{2}{C} \right) L_0)$, any $B(q,r)$ can be $\eta'$-quasisymmetrically embedded into $T_pX$, where this constant only depends on $X$ and $\eta$.
	
	If $X$ is not uniformly perfect, the proof is analogous to the second part of the proof of Theorem \ref{QSweakT}.
	
	Case $1$. If inequality \eqref{lespc} is true for some $C \leq 1$ and $\{x_n\}$, then we follows the above proof to get inequalities \eqref{lespcf}. Applying Lemma \ref{upfc} to $f_n$ finish the proof.
	
	Case $2$. If not, then $B_p$ is a singleton, which is trivial.
\end{proof}

Theorem \ref{wttoe} is a direct corollary of Theorem \ref{wttob}. We can also further strengthen Theorem \ref{wttob} by adding more conditions.

\begin{Remark}
	 One way to strengthen Theorem \ref{wttob} is to restrict quasi-self-symmetricity to local neighborhood. Assume that there exists an open neighborhood $U$ of $p$ such that $X$ is $\eta$-quasi-self-symmetric at a dense subset of $U$ where the quasi-self-symmetric scales and the distortion bounds of these points are uniformly bounded. Then for any $T_pX \in WT_p(X)$, there exists a ball $B(q, r)$ in $X$ and an $\eta'$-quasisymmetric embedding $f: B(q, r) \to T_pX$ such that $f(q) = p_\infty$ and $\eta'(t) = 1/\eta^{-1}\!\!\left(\frac{1}{t}\right)$. If $X$ is uniformly perfect, $r$ depends only on $X$ and $\eta$.
	
	Another way is to study the proper weak tangents. Assume that there exists a sequence of points $\{p_n\}$ such that $X$ is $\eta$-quasi-self-symmetric at $p_n$ for every $n$ and $p_n \to p$ in $X$. Furthermore, the quasi-self-symmetric scales and the distortion bounds of $\{p_n\}$ are uniformly bounded. Then for any $T_pX \in PWT_p(X)$, there exists a ball $B(q, r)$ in $X$ and an $\eta'$-quasisymmetric embedding $f: B(q, r) \to T_pX$ such that $f(q) = p_\infty$ and $\eta'(t) = 1/\eta^{-1}\!\!\left(\frac{1}{t}\right)$. If $X$ is uniformly perfect, $r$ depends only on $X$ and $\eta$.
\end{Remark}

The following theorem generalizes Theorem \ref{wttob} to weakly quasi-self-symmetric spaces. It shows that the restriction on the quasi-self-symmetric scale can be weakened.

\begin{Theorem}\label{wttobd}
	Let $X$ be a compact, proper, doubling, weakly $\eta$-quasi-self-symmetric space, $p$ be a point in $X$ and $T_pX$ be a weak tangent in $WT_p(X)$. Assume that there exists a neighborhood of $p$ and a constant $C$ depending on $p$ such that any point $p'$, at which $X$ is $\eta$-quasi-self-symmetric, in this neighborhood should  satisfy
	\[
	r_{p'} \geq C \cdot d_X(p,p')
	\]
	where $r_{p'}$ is the quasi-self-symmetric scale of $p'$. Then there exists a ball $B(q, r)$ in $X$ and an $\eta_1$-quasisymmetric embedding $f: B(q, r) \to T_pX$ such that $f(q) = p_\infty$ and $\eta_1$ depends only on $\eta$ and $X$. If $X$ is uniformly perfect, $r$ depends only on $X$ and $\eta$.
\end{Theorem}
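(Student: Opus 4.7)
The plan is to follow the same structure as the proof of Theorem \ref{wttob}, with the main modification being a careful choice of approximating points $p_n'$ that accounts for the fact that the standard sizes $r_{p'}$ are no longer uniformly bounded below by a fixed constant $r_0$, but instead by $C \cdot d_X(p, p')$. I set up the convergence $(X, p_n, d_X/\lambda_n) \xrightarrow{GH} (T_pX, p_\infty, g_p)$ with $\lambda_n \to 0$ and $p_n \to p$, and first handle the case that $X$ is uniformly perfect with constant $C_0 \geq 1$.

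The key step is the construction of $p_n'$ inside the dense set $X_{qss}$. Two conditions must be balanced. First, $p_n'$ should be close to $p_n$ in the rescaled metric $d_X/\lambda_n$, i.e., $d_X(p_n, p_n') \leq C_1 \lambda_n$, so that the base points line up and Lemma \ref{upfc} applies. Second, the standard size $r_{p_n'}$ must be at least $\lambda_n$, so that the distortion map can be applied to $\overline{B}(p_n', \lambda_n)$. Using the hypothesis $r_{p_n'} \geq C \cdot d_X(p, p_n')$, the second condition is implied by $d_X(p, p_n') \geq \lambda_n / C$. Uniform perfectness yields a point $x_n \in X$ with $d_X(p, x_n) \in [\lambda_n/(C_0 C), \lambda_n/C]$; density of $X_{qss}$ then produces $p_n' \in X_{qss}$ arbitrarily close to $x_n$, and the triangle inequality gives $d_X(p_n, p_n') \leq d_X(p_n, p) + d_X(p, p_n') + (\text{approximation error}) \leq C_1 \lambda_n$ for all large $n$, since $d_X(p, p_n) \to 0$.

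Once $p_n'$ is chosen, the remainder is essentially the proof of Theorem \ref{wttob}. The weak quasi-self-symmetry produces $\eta$-quasisymmetric maps $f_n : \overline{B}(p_n', \lambda_n) \to U_n \subset X$ with $1/L \leq \diam(U_n) \leq L$. Setting $q_n = f_n(p_n)$ and passing to a subsequence, one obtains $(\overline{B}(p_n', \lambda_n), p_n, d_X/\lambda_n) \xrightarrow{GH} (B_p, p_\infty, g_p) \subset T_pX$ and $(U_n, q_n, d_X) \xrightarrow{GH} (U_q, q, d_X)$ by compactness of $X$, the doubling property, and Theorem $7.3.8$ of \cite{BBI01}. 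The point $x_n$ together with Proposition \ref{diam} witnesses the diameter-comparability hypothesis \eqref{dib} of Lemma \ref{upfc}, so $\{f_n\}$ subconverges to an $\eta$-quasisymmetry $f' : B_p \to U_q$; the desired embedding is $f = (f')^{-1}$ restricted to a suitable ball $B(q, r) \subset U_q$. When $X$ is uniformly perfect, all choices depend only on $X$, $\eta$, and $C_0$, hence so does $r$.

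If $X$ is not uniformly perfect, the argument splits as in the second part of the proof of Theorem \ref{QSweakT}: either an analog of inequality \eqref{lespc} still holds along a subsequence, in which case the argument above runs verbatim, or $(X, p_n, d_X/\lambda_n) \xrightarrow{GH} \{p\}$ and the statement is vacuous. The main technical obstacle is the balancing act in the second paragraph: the linear lower bound $r_{p'} \geq C \cdot d_X(p, p')$ is precisely what allows $p_n'$ to be simultaneously close to $p_n$ in the rescaled metric and to have standard size at least $\lambda_n$, bridging the gap between a uniform positive lower bound on $r_{p'}$ (which would give the stronger Theorem \ref{wttob}) and no lower bound at all.
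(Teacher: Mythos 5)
Your overall strategy matches the paper's: approximate $p_n$ by points $p_n'$ of $X_{qss}$ whose standard size is at least comparable to $\lambda_n$, apply the distortion maps, and pass to the limit with Lemma \ref{upfc}. But the pivotal step --- the choice of $p_n'$ --- contains a genuine gap. You place $p_n'$ near the sphere of radius roughly $\lambda_n/C$ about $p$ and then assert that $d_X(p_n,p_n')\leq C_1\lambda_n$ for large $n$ ``since $d_X(p,p_n)\to 0$.'' This does not follow: the definition of a weak tangent allows $d_X(p_n,p)/\lambda_n\to\infty$ (for instance $d_X(p_n,p)=\sqrt{\lambda_n}$), in which case your $p_n'$ lies at rescaled distance $d_X(p_n,p_n')/\lambda_n\to\infty$ from the base point, the balls $\overline{B}(p_n',\lambda_n)$ escape to infinity in $(X,p_n,d_X/\lambda_n)$, and the limiting quasisymmetry says nothing about a neighborhood of $p_\infty$. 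The paper avoids this with a case distinction you are missing: when $d_X(p_n,p)\geq(2+C)\lambda_n$ it takes $p_n'$ within $C\lambda_n$ of $p_n$ (then $d_X(p,p_n')\geq 2\lambda_n$ automatically, so $r_{p_n'}\geq 2C\lambda_n$), and only when $d_X(p_n,p)<(2+C)\lambda_n$ does it use your annulus construction around $p$.

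A second, related gap: even in the regime where the annulus construction applies, the radius of the ball on which the distortion map is defined is capped at roughly $C\lambda_n$ by the standard-size bound, while $d_X(p_n,p_n')$ can be as large as a larger constant times $\lambda_n$; so $p_n$ need not lie in the domain of $f_n$, and your ``$q_n=f_n(p_n)$'' may be undefined. The paper repairs this by working with $B_n=\overline{B}(p_n',C\lambda_n)\cup\{p_n\}$ and adjoining to the image a point $q_n$ uniformly separated from $U_n'$, noting that adding such isolated points does not destroy quasisymmetry; without some device of this kind you cannot arrange the conclusion $f(q)=p_\infty$ with $q$ an honest point of $X$. There is also a minor quantitative slip: a point of the annulus $[\lambda_n/(C_0C),\lambda_n/C]$ only satisfies $r_{p_n'}\geq\lambda_n/C_0$, not $r_{p_n'}\geq\lambda_n$ as you claim to need; you should take outer radius $C_0\lambda_n/C$ or shrink the ball accordingly. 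The remainder of your outline --- the Hausdorff convergence of the images, the application of Lemma \ref{upfc}, and the treatment of the non-uniformly-perfect case --- agrees with the paper.
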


\begin{proof}
	The proof is analogous to the proof of Theorem \ref{wttob}. The main difference here is that we need to select a suitable sequence of points at which $X$ is quasi-self-symmetric.
	
	We first assume that $X$ is uniformly perfect. Let $(X, p_n, \frac{d_X}{\lambda_n}) \xrightarrow{GH} (T_pX, p_\infty , g_p)$. We select $\{p'_n\}$ in the following way:
	\begin{enumerate}
		\item If $d_X(p_n,p) \geq (1+C)\lambda_n$, let $p'_n$ be a point at which $X$ is $\eta$-quasi-self-symmetric and $C_1 \lambda_n \leq d_X(p_n,p'_n) \leq C \lambda_n$ where $C_1$ is a constant comes from the uniform perfectness of $X$.\label{lpn}
		
		\item If $d_X(p_n,p) < (1+C)\lambda_n$, let $p'_n$ be a point at which $X$ is $\eta$-quasi-self-symmetric and $(2+C)\lambda_n \leq d_X(p_n ,p'_n) \leq C_2\lambda_n$, where $C_2$ is a constant comes from the uniform perfectness of $X$.\label{spn}
	\end{enumerate}
	Thus $r_{p'_n} \geq C \cdot d_X(p,p'_n) \geq C\lambda_n$ for any $n \in \mathbb{N}$.
	
	Let $B(p,r_0)$ be a ball inside the neighborhood stated in the theorem. Without loss of generality, assume that $C\lambda_n < r_0/2$, $C_1\lambda_n < r_0/2$ and $d_X(p_n,p) < r_0/2$ for all $n \in \mathbb{N}$. We denote by $B_n := \overline{B}(p'_n, C\lambda_n) \cup \{p_n\}$. Thus
	\[
	\left(B_n, p_n, \frac{d_X}{\lambda_n}\right) \xrightarrow{GH} (B_p, p_\infty , g_p)
	\]
	after passing $\{\lambda_n\}$ to a suitable subsequence and $(B_p, p_\infty , g_p) \subset T_pX$ by the isometric version of Lemma \ref{upfci}. Furthermore, $\overline{B}(p'_n, C\lambda_n)$ is $\eta$-quasisymmetric to $U'_n$, where $U'_n \subset X$, $1/L_0 \leq \diam(U'_n) \leq L_0$ and $L_0$ is the distortion bound of $X$. 
	
	We claim that for every $n \in \mathbb{N}$ there exists a point $q_n \in X$ and some constant $L$ depending only on $X$ such that $d_X(q_n, x') > L$ for every $x' \in U'_n$ . Such a $q_n$ exists if $L_0 < \diam(X)$. If not, we adjust $\overline{B}(p'_n, C\lambda_n)$ to a smaller ball, thus getting a smaller $U'_n$ under the same quasisymmetry. After adding $q_n$ as a base point of $U'_n$ if needed, we denote by
	\[
	U_n := \left\{
	\begin{array}{ll}
	U'_n & \textrm{if} \ p_n \in \overline{B}(p'_n, C\lambda_n);
	\\
	U'_n \cup q_n & \textrm{if} \ p_n \notin \overline{B}(p'_n, C\lambda_n).
	\end{array}
	\right.
	\]
	Let $f_n$ be the quasisymmetry that maps $\overline{B}(p'_n, C\lambda_n)$ to $U_n$. Then we extend $f_n$ to $B_n$ by defining $q_n:=f_n(p_n)$ when $p_n \notin \overline{B}(p'_n, C\lambda_n)$. Notice that there is no need to extend $f_n$ when $p_n \in \overline{B}(p'_n, C\lambda_n)$. We will prove that $f_n: B_n \to U_n$ is a quasisymmetry in next step.
	
	It is clear that if $p'_n$ is defined in case $\eqref{lpn}$, then $p_n \in \overline{B}(p'_n, C\lambda_n)$ and $B_n$ is $\eta$-quasisymmetric to $U_n$. Now let's consider when $p'_n$ is defined in case $\eqref{spn}$. In this case, $p_n \notin \overline{B}(p'_n, C\lambda_n)$ and $f_n$ is extended to $p_n$. Moreover, we have the following inequalities:
	\[
	\begin{aligned}
	&2\lambda_n \leq d_X(p_n, x) \leq (C+C_2) \lambda_n, \ \forall x \in \overline{B}(p'_n, C\lambda_n),
	\\
	&L \leq d_X(q_n,x') \leq \diam(X), \ \forall  x' \in U'_n,
	\\
	&2C_1 \lambda_n \leq \diam(\overline{B}(p'_n, C\lambda_n)) \leq 2C \lambda_n.
	\end{aligned}
	\]
	Let $x,y$ be any two points in $\overline{B}(p'_n, C\lambda_n)$, then
	\[
	\frac{1}{2\eta\left( \frac{\diam(\overline{B}(p'_n, C\lambda_n))}{d_X(x,y)} \right)} \leq \frac{d_X(f(x), f(y))}{\diam(U'_n)} \leq \eta\left( \frac{2d_X(x,y)}{\diam(\overline{B}(p'_n, C\lambda_n))} \right)
	\]
	by Proposition \ref{diam}. Thus
	\[
	\frac{1}{L_0}\frac{1}{2\eta\left( \frac{2C\lambda_n}{d_X(x,y)} \right)} \leq d_X(f(x), f(y)) \leq L_0 \eta\left( \frac{d_X(x,y)}{C_1\lambda_n} \right)
	\]
	and
	\[
	\frac{1}{L_0 \diam(X)}\frac{1}{2\eta\left(C \cdot \frac{d_X(x, p_n)}{d_X(x,y)} \right)} \leq \frac{d_X(f(x), f(y))}{d_X(f(x), q_n)} \leq \frac{L_0}{L} \eta\left( \frac{C+C_2}{C_1} \cdot \frac{d_X(x,y)}{d_X(x,p_n)} \right).
	\]
	It shows that $(B_n, p_n)$ is quasisymmetric to $(U_n, q_n)$ for every $n \in \mathbb{N}$. 
	
	Assume  that $U_n$ converges in the pointed Hausdorff sense to $U_q$ after passing to a suitable subsequence. Notice that from the definition of $\{p'_n\}$, we have that for every $n \in \mathbb{N}$,
	\[
	C_1 \leq \frac{1}{\lambda_n}d_X(p_n, p'_n) \leq C_2.
	\]
	It follows Proposition \ref{diam} that
	\[
	\frac{1}{2\eta\left( \frac{C+C_2}{C_1} \right)} \leq \frac{1}{2\eta\left( \frac{\diam(B_n)}{d_X(p_n,p'_n)} \right)} \leq \frac{d_X(q_n, f_n(p'_n))}{\diam(U_n)} \leq \eta\left( \frac{2d_X(p_n,p'_n)}{\diam(B_n)} \right) \leq \eta\left( \frac{C_2}{C_1} \right).
	\]
	Thus
	\[
	\frac{1}{2\eta\left( \frac{C+C_2}{C_1} \right) L_0} \leq d_X(q_n, f_n(p'_n)) \leq \eta\left( \frac{C_2}{C_1} \right)\diam(X).
	\]
	
	Applying Lemma \ref{upfc} to $f_n$, then we have that $B_p$ is $\eta_1$-quasisymmetric to $U_q$ where $\eta_1$ depends only on $\eta$ and $X$. Since $B_p \subset T_pX$, $U_q$ is $\eta_1$-quasisymmetrically embedded into $T_pX$. Furthermore, if $r < 1/(2\eta\left( \frac{C+C_2}{C_1} \right) L_0)$, any $B(q,r)$ can be $\eta_1$-quasisymmetrically embedded into $T_pX$, where this constant only depends on $X$ and $\eta$.
	
	If $X$ is not uniformly perfect, we follow the same idea in the proof of Theorem \ref{wttob} and finish the proof.
\end{proof}

\section{Gromov hyperbolic spaces and groups}\label{GHSG}

\subsection{Preliminaries}
A length space $(X, d)$ is called \emph{$\delta$-hyperbolic} (where $\delta \geq 0$) if for any triangle with geodesic sides in $X$, each side of the triangle is contained in the $\delta$-neighborhood of the union of two other sides. Recall that the \emph{Cayley graph} $\Gamma(G,S)$ of a finitely generated group $G$ with respect to a symmetric finite generating set $S$ is a graph whose vertex are elements of $G$ and $g_1,g_2 \in G$ is connected if and only if $g_1g_2^{-1} \in S$. A finitely generated group $G$ is called \emph{hyperbolic} if there exists a symmetric finite generating set $S$ for $G$ and a $\delta \geq 0$ such that the Cayley graph $\Gamma(G,S)$ of $G$ with respect to $S$ is $\delta$-hyperbolic. A group $G$ equipped with the \emph{word metric} with respect to $S$ is equivalent to its Cayley graph $\Gamma(G,S)$ equipped with the intrinsic metric(where each edge has length $1$). See \cite{BH99}, \cite{BBI01} and \cite{KB02} for a reference of Gromov hyperbolic spaces and groups.

Let $(X,d)$ be a metric space and $x,y,p \in X$. The \emph{Gromov product} of $x,y$ with respect to $p$ is defined by
\[
(x,y)_p = \frac{1}{2}\left( d(x,p)+d(y,p)-d(x,y) \right).
\]
In hyperbolic metric spaces, Gromov product measures how long two geodesics travel close together. More precisely, if $x, y, p$ are three distinct points in a $\delta$-hyperbolic space $X$, then the initial segments of length $(x, y)_p$ of any two geodesics connecting $x,p$ and $y,p$ are $2\delta$-close in Hausdorff distance.

Let $(X,d)$ be a proper $\delta$-hyperbolic metric space and $p$ be a chosen base point of $X$. We define the \emph{boundary at infinity} of $X$ by
\[
\partial_{\infty} X := \{[r]: r:[0,\infty) \to X \ \textrm{is a geodesic ray}, \ r(0) = p\},
\]
where two geodesic rays $r_1,r_2$ are equivalent if $d_H(r_1,r_2)<\infty$.

Equivalently, we also have
\[
\partial_{\infty} X := \{[\{x_n\}]: \{x_n\}_{n=1}^\infty \ \textrm{is a sequence converging to infinity in} \ X \},
\]
where $\{x_n\}$ converges to infinity if $\liminf\limits_{i,j \to \infty}(x_i, x_j)_p  = \infty$, and $\{x_n\}, \{y_n\}$ are equivalent if $\liminf\limits_{i,j \to \infty}(x_i, y_j)_p  = \infty$. It can be directly verified that the definitions do not depend on the base point.

We extend the Gromov product to $X \cup \partial_{\infty}X$ by
\[
(x ,y)_p := \sup \liminf_{i,j \to \infty} (x_i , y_j)_p
\]
where the supremum is taken over all sequences $\{x_i\}$ and $\{y_j\}$ in $X$ such that $x = [\{x_i\}]$ and $y = [\{y_i\}]$. When $x \in X$, $x = [\{x_i\}]$ means that $\lim_{i \to \infty} x_i = x$.

The following are some useful properties in studying the boundaries at infinity of hyperbolic spaces. See Remarks $3.17$ in Chapter III.H of \cite{BH99}.

\begin{Proposition}\label{gpb}
	Let $X$ be a $\delta$-hyperbolic space and $p$ be a chosen base point.
	\begin{enumerate}
		\item For any $x,y,z \in X \cup \partial_{\infty} X$, we have
		\begin{equation}
		(x,y)_p \geq \min\{  (x,z)_p, (y,z)_p \} -2 \delta.
		\end{equation}
		
		\item For any $x,y \in \partial_{\infty}X$ with $x = [\{x_i\}]$ and $y = [\{y_i\}]$, we have
		\begin{equation}
		(x, y)_p - 2\delta \leq \liminf_{i,j \to \infty} (x_i , y_j)_p \leq (x, y)_p.
		\end{equation}
	\end{enumerate}
\end{Proposition}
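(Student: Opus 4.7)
The plan is to establish part (1) first for interior triples, then derive part (2) from that, and finally upgrade part (1) to allow boundary points.

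For part (1) with $x,y,z \in X$, I would argue directly from the $\delta$-thin triangle condition. Set $a = (x,z)_p$, $b = (y,z)_p$, and without loss of generality assume $a \leq b$. Fix geodesic segments $[p,x]$, $[p,y]$, $[p,z]$ and consider the geodesic triangle with vertices $p, x, z$. The definition of the Gromov product identifies $a$ with the distance from $p$ to the internal point on this triangle that separates the side toward $x$ from the side toward $z$; thin-triangle gives that the point $u$ on $[p,x]$ at distance $a$ from $p$ lies within $\delta$ of $[p,z]$, and symmetrically the point $v$ on $[p,z]$ at distance $a$ from $p$ lies within $\delta$ of $[p,x]$. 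Applying the same reasoning to the triangle with vertices $p, y, z$ (using $b \geq a$) shows that the initial segments of $[p,y]$ and $[p,z]$ of length $a$ fellow-travel within $\delta$. Chaining these estimates through $[p,z]$ gives $d(u, u') \leq 2\delta$ where $u'$ is the point on $[p,y]$ at distance $a$ from $p$. Plugging into the defining formula for $(x,y)_p$ and using the triangle inequality yields $(x,y)_p \geq a - 2\delta$, which is exactly what is claimed.

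For part (2), the upper bound $\liminf_{i,j\to\infty}(x_i, y_j)_p \leq (x,y)_p$ is immediate from the supremum definition of $(x,y)_p$. For the lower bound, I would pick representative sequences $\{x'_k\}$, $\{y'_l\}$ of $x$ and $y$ whose joint $\liminf$ approximates the supremum $(x,y)_p$ to within $\varepsilon$. Since $\{x_i\}$ and $\{x'_k\}$ both converge to infinity in the class of $x$, the mixed Gromov products $(x_i, x'_k)_p$ tend to $+\infty$ as $i, k \to \infty$, and similarly $(y_j, y'_l)_p \to \infty$. I would then use the interior case of (1) on the triple $(x_i, y_j, x'_k)$, and again on $(x'_k, y_j, y'_l)$, to compare $(x_i, y_j)_p$ with $(x'_k, y'_l)_p$ modulo terms that grow without bound; the liminf over the index variables removes those diverging terms and leaves $(x,y)_p - 2\delta$ as the asymptotic lower bound, after letting $\varepsilon \to 0$.

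For part (1) with points in $\partial_\infty X$, I would take representative sequences for each boundary point (and constant sequences for interior points), apply the interior case to the resulting triples, and then pass to $\liminf$. Part (2) then guarantees that the liminf is within $2\delta$ of the extended Gromov products, allowing the interior-case inequality to transfer to the extension.

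The main technical obstacle is bookkeeping the constants: the naive chaining of two applications of the 4-point inequality in part (2) gives $-4\delta$ rather than $-2\delta$. To achieve the sharp $-2\delta$ I expect one must either select the representatives $\{x'_k\}, \{y'_l\}$ so cleverly that only one application of (1) is needed (for instance by exploiting that the supremum is realized along a diagonal subsequence of near-geodesic rays emanating from $p$), or reorganize the argument around fellow-traveling of rays toward $x$ and $y$ rather than around sequences. Verifying this sharper constant, and confirming that the definition of $(x,y)_p$ as a supremum agrees with the geodesic-ray formulation up to the stated slack, is where the real work lies.
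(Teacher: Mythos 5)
The paper does not actually prove this proposition: it is quoted verbatim (as ``Remarks 3.17 in Chapter III.H'' of \cite{BH99}), so there is no internal argument to compare yours against. Your outline is the standard textbook route --- derive the four-point inequality for interior triples from triangle thinness, obtain the $\liminf$ sandwich by chaining the four-point inequality through representative sequences, and then extend the four-point inequality to $X\cup\partial_{\infty}X$ by passing to $\liminf$ over representatives --- and at that level of structure it is correct.

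The constant issue you flag at the end is real, and it is worth being precise about where it comes from, because as written your argument does not close. First, the paper defines $\delta$-hyperbolicity by the slim-triangle (Rips) condition, whereas your interior argument uses the thin-triangle/insize property (that the two ``internal points'' at distance $(x,z)_p$ from $p$ on $[p,x]$ and $[p,z]$ are $\delta$-close); passing from slim to thin multiplies $\delta$ by a bounded factor, so the interior four-point inequality you obtain has constant $C\delta$ for some absolute $C$, not $2\delta$ on the nose. Second, and more importantly, your derivation of part (2) chains two applications of part (1), each costing the full constant of part (1); if that constant is $2\delta$ you land at $4\delta$, as you note. The resolution in \cite{BH99} is that there $\delta$ denotes the constant in the \emph{interior} four-point inequality itself, so a single chained estimate of the form $(x'_i,y'_j)_p\geq\min\{(x'_i,x_i)_p,(x_i,y_j)_p,(y_j,y'_j)_p\}-2\delta$ (two applications, each costing $\delta$, with the first and last terms tending to infinity by equivalence of the representatives) gives exactly the stated $2\delta$; the boundary case of part (1) is then obtained the same way. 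So the sharp constants in the proposition are an artifact of that convention and cannot be recovered verbatim from the slim-triangle $\delta$ used elsewhere in this paper; your proof is correct once one either adopts the four-point constant as the definition of $\delta$ or accepts a universal multiplicative degradation of the constant, and no further idea is missing.
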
 

There is a natural topology on $\partial_{\infty} X$, where the basis of this topology is the collection of
\begin{multline*}
V(x,\varepsilon) := \{y \in \partial_{\infty} X: \exists \ \textrm{gedeosic rays} \ r, r' \ \textrm{starting at} \ p
\\
\textrm{such that} \ [r]=x, [r']=y \ \textrm{and} \ \liminf_{t \to \infty}(r(t),r'(t))_p > \varepsilon\}
\end{multline*}
for every $x \in X$ and $\varepsilon>0$.

Equivalently, we also have
\begin{multline*}
V(x,\varepsilon) := \{y \in \partial_{\infty} X: \exists \ \textrm{sequences} \ \{x_n\}, \{y_n\}
\\
\textrm{such that} \ [\{x_n\}] = x, [\{y_n\}] = y \ \textrm{and} \ \liminf_{i,j \to \infty}(x_i,y_j)_p > \varepsilon\}
\end{multline*}
for every $x \in X$ and $\varepsilon >0$. Moreover, $\partial_{\infty} X$ is compact with this topology.

We say a metric $d_a$ on $\partial_{\infty} X$ is a \emph{visual metric} with respect to the base point $p$ and the visual parameter $a >1$ if there is $C_1, C_2 > 0$ such that the following holds:
\begin{enumerate}
	\item The metric $d_a$ induces the natural topology on $\partial_{\infty} X$;
	\item For any two distinct points $x, y \in \partial_{\infty} X$,
	\begin{equation}
	C_1a^{-(x,y)_p} \leq d_a(x,y) \leq C_2a^{-(x,y)_p}.
	\end{equation}
\end{enumerate}

We say two metric spaces $X$ and $Y$ are \emph{quasi-isometric} if there exists a map $f:X \to Y$ and $C_1 \geq 1, C_2 \geq 0$ such that
\begin{enumerate}
	\item For any two points $x_1,x_2 \in X$,
	\begin{equation}
		\frac{1}{C_1}d_X(x_1, x_2) - C_2 \leq d_Y(f(x_1), f(x_2)) \leq C_1 d_X(x_1, x_2) + C_2.
	\end{equation}
	
	\item For any $y \in Y$, there exists a $x \in X$ such that $d_Y(f(x), y) \leq C_2$.
\end{enumerate}

The following propositions illustrate several important properties of visual metrics.

\begin{Proposition}\label{hs}
	Let $X$ be a proper $\delta$-hyperbolic space, then
	\begin{enumerate}
		\item There exists an $a_0 > 1$ such that for any base point $p \in X$ and any $a \in (1, a_0)$, the boundary $\partial_{\infty} X$ admits a visual metric $d_a$ with respect to $p$ and $a$.
		
		\item Suppose $d'$ and $d''$ are visual metrics on $\partial_{\infty} X$ with respect to the same visual parameter $a$ and the base points $p'$ and $p''$, respectively. Then $d'$ and $d''$ are bi-Lipschitz equivalent.
		
		\item Suppose $d'$ and $d''$ are visual metrics on $\partial_{\infty} X$ with respect to the visual parameters $a'$ and $a''$ and the base points $p'$and $p''$, respectively. Then $d'$ and $d''$ are H\"older equivalent. 
	\end{enumerate}
\end{Proposition}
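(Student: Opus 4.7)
The plan is to dispatch the three parts in turn, with the existence statement (1) doing most of the work.

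For part (1), I would first introduce the quasi-metric $\rho_a(x,y) := a^{-(x,y)_p}$ on $\partial_{\infty} X$ (with $\rho_a(x,x) := 0$), and verify using Proposition \ref{gpb}(1) on $X \cup \partial_{\infty} X$ the approximate ultrametric inequality
\[
\rho_a(x,y) \leq a^{2\delta} \max\{\rho_a(x,z),\, \rho_a(z,y)\}.
\]
The next step is the standard Frink-type chain construction: set
\[
d_a(x,y) := \inf\Bigl\{\sum_{i=0}^{n-1} \rho_a(x_i, x_{i+1}) : x_0 = x,\ x_n = y,\ x_i \in \partial_{\infty} X\Bigr\}.
\]
The technical core is an induction on chain length showing that, provided $a^{2\delta}$ is close enough to $1$, one has $d_a(x,y) \geq C_1 \rho_a(x,y)$ for some $C_1 > 0$; combined with the trivial one-step bound $d_a(x,y) \leq \rho_a(x,y)$, this produces the visual-metric estimates. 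Choosing $a_0 > 1$ to be the corresponding threshold (depending only on $\delta$) then handles all $a \in (1, a_0)$. The topology compatibility follows by identifying the basis $\{V(x,\varepsilon)\}$ of the natural topology with sub-level sets of $\rho_a$ via Proposition \ref{gpb}(2), and then with $d_a$-balls via the comparison just established.

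For part (2), I would invoke the base-point change formula $|(x,y)_p - (x,y)_{p'}| \leq d(p,p')$, which is a direct algebraic consequence of the definition on $X$ and extends to $X\cup \partial_{\infty}X$ with an additive error of $O(\delta)$ via Proposition \ref{gpb}(2). Exponentiating by $a^{-1}$ converts this additive comparison into a bounded multiplicative one between $a^{-(x,y)_p}$ and $a^{-(x,y)_{p'}}$, and hence — through the visual-metric estimates applied to both $d'$ and $d''$ — into bi-Lipschitz equivalence of any two visual metrics sharing the parameter $a$.

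For part (3), I first apply part (2) to reduce to the case where $d'$ and $d''$ are taken with respect to the same base point $p$. Writing the visual-metric estimates for both yields
\[
d'(x,y) \asymp (a')^{-(x,y)_p} = \bigl((a'')^{-(x,y)_p}\bigr)^{\log a'/\log a''} \asymp d''(x,y)^{\log a'/\log a''},
\]
so $d'$ and $d''$ are H\"older equivalent with exponent $\log a'/\log a''$. The principal obstacle is the Frink metrization step in part (1): the induction requires splitting an optimal chain at a midpoint where the cumulative $\rho_a$-length first exceeds half the total and then repeatedly applying the quasi-ultrametric inequality. This is the only place where a genuinely analytic argument enters; everything else is bookkeeping with the Gromov-product inequalities already recorded in Proposition \ref{gpb}.
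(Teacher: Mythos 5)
Your proposal is correct and follows the standard construction of visual metrics: the approximate ultrametric inequality for $\rho_a(x,y)=a^{-(x,y)_p}$ from Proposition \ref{gpb}, the Frink-type chain metrization with the midpoint-splitting induction (valid once $a^{2\delta}$ is close enough to $1$, which fixes $a_0$ in terms of $\delta$), base-point change for part (2), and the exponent comparison for part (3). The paper states Proposition \ref{hs} without proof, deferring to the references \cite{BH99} and \cite{KB02}, and the argument you outline is precisely the one found there, so there is nothing substantive to compare.
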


\begin{Proposition}
	Let $X$ and $Y$ be proper hyperbolic spaces and $f:X \to Y$ be a quasi-isometry, then $f$ induces a quasisymmetry $\hat{f}: \partial_{\infty} X  \to \partial_{\infty} Y$, where the boundaries at infinity are equipped with visual metrics.
\end{Proposition}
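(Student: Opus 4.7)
The plan is to define $\hat{f}$ as a limit on sequences, show that it coarsely preserves Gromov products, and then translate this into a quasisymmetric distortion bound via the defining comparison between visual metrics and Gromov products. First, given $\xi = [\{x_n\}] \in \partial_{\infty}X$, I set $\hat{f}(\xi) := [\{f(x_n)\}]$. The key tool for proving this is well-defined (namely that $\{f(x_n)\}$ converges to infinity and that the class does not depend on the representative) is a coarse preservation of Gromov products: there exist constants $A \geq 1$ and $B \geq 0$, depending only on the quasi-isometry constants of $f$ and on the hyperbolicity constants, such that
\begin{equation*}
\frac{1}{A}(x,y)_p - B \;\leq\; (f(x), f(y))_{f(p)} \;\leq\; A\,(x,y)_p + B
\end{equation*}
for all $x,y \in X$. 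Combined with Proposition \ref{gpb}, this extends to the boundary and simultaneously yields continuity and injectivity of $\hat{f}$.

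The heart of the argument is the Gromov product comparison. A direct expansion using the quasi-isometry inequality fails because the multiplicative constant does not cancel. Instead I would use the geometric reformulation valid in a hyperbolic space, $(x,y)_p = d(p, [x,y]) + O(\delta)$, where $[x,y]$ is a geodesic joining $x$ and $y$. The image $f\circ[x,y]$ is a quasi-geodesic in $Y$, which by the Morse stability lemma lies within uniform Hausdorff distance $R = R(\delta, f)$ of any geodesic $[f(x),f(y)]$. Therefore $d(f(p),[f(x),f(y)])$ differs from $d(f(p), f\circ[x,y])$ by at most $R$, and the latter is comparable to $d(p,[x,y])$ up to the quasi-isometry constants. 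Chaining these estimates produces the desired inequality. Bijectivity of $\hat{f}$ then follows by applying the same construction to a quasi-inverse $g:Y \to X$, whose boundary extension $\hat{g}$ is inverse to $\hat{f}$ because bounded perturbations of a sequence do not affect its equivalence class in $\partial_{\infty}X$.

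Next, I would convert the boundary Gromov product inequality into a metric inequality. Fix visual metrics $d_a$ on $\partial_{\infty}X$ at $p$ and $d_b$ on $\partial_{\infty}Y$ at $f(p)$. The defining comparison $d_a(\xi,\eta) \asymp a^{-(\xi,\eta)_p}$, together with the two-sided bound on Gromov products, produces power-type bounds
\begin{equation*}
c_1\, d_a(\xi,\eta)^{\alpha_1} \;\leq\; d_b(\hat{f}(\xi), \hat{f}(\eta)) \;\leq\; c_2\, d_a(\xi,\eta)^{\alpha_2}
\end{equation*}
with $\alpha_1 = A \log b / \log a$ and $\alpha_2 = \log b / (A \log a)$. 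Forming the ratio at a triple $(\xi,\eta,\zeta)$, and using that $\partial_{\infty}X$ is compact so that $d_a$ is bounded above by some $D$, one checks that the quotient $d_b(\hat{f}\xi, \hat{f}\eta)/d_b(\hat{f}\xi, \hat{f}\zeta)$ is dominated by an explicit function $\eta(t)$ of $t = d_a(\xi,\eta)/d_a(\xi,\zeta)$, treating the cases $t \leq 1$ and $t \geq 1$ separately to accommodate the two different exponents. Independence of the choice of visual parameters and base points is absorbed into the distortion function via parts (2) and (3) of Proposition \ref{hs}.

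The principal obstacle is the coarse Gromov product comparison: it cannot be obtained by direct manipulation of the quasi-isometry inequality and genuinely requires the Morse stability lemma for quasi-geodesics in hyperbolic spaces. A secondary, but routine, technical point is extracting an honest quasisymmetric distortion function from the two unequal H\"older exponents $\alpha_1 \neq \alpha_2$; compactness of $\partial_{\infty}X$ (hence boundedness of $d_a$) is precisely what makes this step go through.
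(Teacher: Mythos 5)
The paper states this proposition as background and gives no proof of its own, so I can only assess your argument on its merits. Your skeleton is the standard one (define $\hat{f}$ on sequences, prove coarse preservation of Gromov products via the Morse stability lemma, then translate into metric estimates), and everything up to and including the inequality $\frac{1}{A}(x,y)_p - B \leq (f(x),f(y))_{f(p)} \leq A(x,y)_p + B$ is correct. The gap is in the final conversion step. From the two H\"older bounds $c_1 d_a(\xi,\eta)^{\alpha_1} \leq d_b(\hat{f}\xi,\hat{f}\eta) \leq c_2 d_a(\xi,\eta)^{\alpha_2}$ with $\alpha_1 = A^2\alpha_2 > \alpha_2$, the ratio at a triple is bounded only by
\[
\frac{d_b(\hat{f}\xi,\hat{f}\eta)}{d_b(\hat{f}\xi,\hat{f}\zeta)} \leq \frac{c_2}{c_1}\, t^{\alpha_2}\, d_a(\xi,\zeta)^{\alpha_2-\alpha_1},
\]
and since $\alpha_2 - \alpha_1 < 0$ the extra factor blows up as the triple degenerates ($d_a(\xi,\zeta)\to 0$). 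Compactness of $\partial_{\infty}X$ bounds $d_a(\xi,\zeta)$ from \emph{above}, which is the wrong direction; nothing bounds it from below. So the quotient is not dominated by a function of $t$ alone, and indeed a two-sided H\"older sandwich with distinct exponents does not by itself imply quasisymmetry: the two inequalities constrain $d_b(\hat{f}\xi,\hat{f}\eta)$ and $d_b(\hat{f}\xi,\hat{f}\zeta)$ separately and carry no coupling between them.

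The missing idea is that the quantity actually controlling quasisymmetry is the \emph{difference} $(\xi,\eta)_p - (\xi,\zeta)_p$ (equivalently, a cross-ratio), and applying your two-sided Gromov product bound to each term separately destroys the cancellation: it yields $s' \geq \frac{1}{A}(\xi,\eta)_p - A(\xi,\zeta)_p - 2B$, which is not a function of $s = (\xi,\eta)_p - (\xi,\zeta)_p$. The standard repair is to show that such differences (or the four-point cross-ratio $(\xi_1,\xi_3)_p+(\xi_2,\xi_4)_p-(\xi_1,\xi_4)_p-(\xi_2,\xi_3)_p$) are realized, up to an additive error $O(\delta)$, as genuine distances between coarsely well-defined points of $X$ (centers of ideal triangles, or the nearest-point projections of geodesics onto one another), and these distances transform under the quasi-isometry with the usual multiplicative-plus-additive control \emph{applied once to the difference itself}. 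This shows $\hat{f}$ is (power) quasi-M\"obius, and one then invokes V\"ais\"al\"a's theorem that a quasi-M\"obius homeomorphism between bounded spaces is quasisymmetric (for a single map the required three-point normalization can always be arranged). Without this extra geometric input, or some substitute for it, your last step does not go through.
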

	
	We define the \emph{boundary at infinity of a hyperbolic group $G$} by $\partial_{\infty} G := \partial_{\infty} \Gamma(G,S)$. A group $G$ acts on a length space $X$ \emph{geometrically} if $G$ acts on $X$ as an isometry, i.e., every $g \in G$ acts on $X$ as an isometry, cocompactly, i.e.,  $X/G$ is compact, and properly discontinuously, i.e., for any compact $K \subset X$ the set $\{g \in G: gK \cap K = \emptyset\}$ is finite. For example, a finitely generated group $G$ acts on $\Gamma(G, S)$ geometrically for any symmetric finite generating set $S$. The following theorem studies geometric actions. Readers may refer to Proposition $8.19$ in Part I, Chapter $8$ of \cite{BH99} for a proof.
	
\begin{Theorem}[\v Svarc-Milnor]\label{be}
	Let $G$ be a group acting geometrically on a length space $X$. Then the group $G$ is finitely generated, the space $X$ is proper and for any symmetric finite generating set $S$ of G, there exists a quasi-isometry between $\Gamma(G,S)$ and $X$.
\end{Theorem}

Notice that every element $g$ of a hyperbolic group $G$ induces a quasisymmetry $\hat{g}: \partial_{\infty} G \to \partial_{\infty} G$.

\subsection{Metric structure of boundaries at infinity of Gromov hyperbolic groups}
Let $G$ be a Gromov hyperbolic group. The metric structure of $\partial_{\infty} G$ is an interesting object to study. For example, $\partial_{\infty} G$ is Ahlfors $Q$-regular for some $Q > 0$; thus $\partial_{\infty} G$ is doubling. See Section $15$ of \cite{KB02} for a reference. 

A metric space $X$ is called \emph{$H$-quasi-self-similar} if there exist $r_0 >0, H \geq 1$ such that given any ball $B$ with radius $r < r_0$, there exists a $H$-bi-Lipschitz map $f_B$ which maps $B$ into $X$ such that
\begin{equation}\label{dists}
\frac{1}{H} \frac{r_0}{r} d_X(x,y) \leq d_X(f_B(x), f_B(y)) \leq H \frac{r_0}{r} d_X(x,y)
\end{equation}
for all $x, y \in B$. In fact, it is a special case of quasi-self-symmetric.

The following theorem shows that $\partial_{\infty} G$ is quasi-self-similar.

\begin{Theorem}\label{hgbqss}
	Let $G$ be a hyperbolic group and $\partial_{\infty} G$ be its boundary at infinity. Then $(\partial_{\infty} G,d)$ is quasi-self-similar for any visual metric $d$ on $\partial_{\infty} G$.
\end{Theorem}

\begin{proof}
	Without loss of generality, we assume that in this proof the visual metric on $\partial_{\infty} G$ is with respect to $1 \in G$; otherwise, applying part $(2)$ of Proposition \ref{hs} induces the desired result.
	
	We first prove the theorem when $G$ is a free group $G : =F(a_1,\ldots,a_r)$.
	
	Let $x,y$ be reduced words(i.e., the simplest representation in the generators) on $G\cup \partial_{\infty} G$, and let the Gromov product of them with respect to the identity element $1$ be $(x,y)_1 := t$. If given the representations of $x,y$, the maximal common initial segment of $x,y$ is the maximal common part, starting from initial, of their representations. Thus $t$ is the number of the maximal common initial segment of $x,y$.
	
	For any visual metric $d$ on $\partial_{\infty} G$, there exist $C_1, C_2 >0, a>1$ such that
	\[
	C_1 \cdot a^{-(x,y)_1} \leq d(x,y) \leq C_2 \cdot a^{-(x,y)_1}
	\]
	for every $x,y\in \partial_{\infty} G$. Notice that $a \in (1, \infty)$ since the Cayley graph of $G$ is a tree.
	
	Let $p$ be a point in $\partial_{\infty} G$. We think of $p$ as a semi-infinite(bounded in one direction) reduced word in $F(a_1,\ldots,a_r)$. For any integer $m\geq 0$, we denote by 
	\[
	U(p,m) := \{q \in \partial_{\infty} G: (p,q)_1 > m\}.
	\]
	This is well defined since every element in $\partial_{\infty} G$ has a unique semi-infinite reduced word.
	
	Take any $p\in \partial_{\infty} G$ and any integer $m\geq 1$. Let $w \in F(a_1,\ldots,a_r)$ be the element given by the initial segment of $p$ of length $m$ and let $g=w^{-1}$.
	
	Let $x,y\in U(p,m)$. Then, viewed as semi-infinite reduced word in $G$, both $x$ and $y$ have $w$ as their initial segment of length $m$, so $x=wx'$ and $y=wy'$ where $x',y'$ are semi-infinite reduced words and thus elements of $\partial_{\infty} G$.
	
	We have $gx=w^{-1}wx'=x'$ and $gy=w^{-1}wy'=y'$. Let $v$ be the maximal common initial segment of $x',y'$, then $wv$ is the maximal common initial segment of $x,y$. Let $|v|$ denotes the length of $v$ under the word metric. Since $G$ is a free group, the Cayley graph of $G$ is a tree and we have $(x', y')_1=|v|$ and $(x, y)_1=|v|+m$.
	
	Hence 
	\[
	C_1 \cdot \frac{1}{a^{|v|}} \leq d(gx,gy)=d(x',y') \leq C_2 \cdot \frac{1}{a^{|v|}}
	\]
	and
	\[
	C_1 \cdot \frac{1}{a^{|v|+m}} \leq d(x,y) \leq C_2 \cdot \frac{1}{a^{|v|+m}}.
	\]
	Thus
	\[
	\frac{C_1}{C_2}a^m d(x,y) \leq d(gx,gy) \leq \frac{C_2}{C_1}a^m d(x,y).
	\]
	Notice that
	\[
	B(p, C_1 \cdot a^{-m}) \subset \{q \in \partial_{\infty} G: (p,q)_1 > m\} = U(p,m).
	\]
	Let $B(p,r)$ be a ball and $C_1 \cdot a^{-(l+1)} \leq r \leq C_1 \cdot a^{-l}$, then
	\[
	\frac{C_1^2}{a C_2}\frac{1}{r} d(x,y) \leq \frac{C_1}{C_2}a^l d(x,y) \leq d(gx,gy) \leq \frac{C_2}{C_1}a^l d(x,y) \leq C_2\frac{1}{r} d(x,y)
	\]
	for any $x,y \in B(p,r)$. So $\partial_{\infty} G$ is quasi-self-similar.
	
	If $G$ is an arbitrary hyperbolic group, the argument is similar. Let $\Gamma(G,S)$ be a Cayley graph of $G$ and $d$ be any visual metric on $\partial_{\infty} G$ with respect to the base point $1$ and the visual parameter $a$. We assume that $\Gamma(G,S)$ is $\delta$-hyperbolic. There exist $C_1, C_2 > 0$ such that for any $x,y\in \partial_{\infty} G$, 
	\[
	C_1 \cdot a^{-(x,y)_1}\le d(x,y) \le C_2 \cdot a^{-(x,y)_1}.
	\]
	
	For any $m\geq 0$ and $p \in \partial_{\infty} G$, we define
	\begin{multline*}
	U(p,m) := \{q \in \partial_{\infty} G : \exists \ \textrm{geodesic rays} \ r, r' \ \textrm{starting at} \ 1  
	\\
	\textrm{such that} \ [r]=p, [r']=q \ \textrm{and} \ \liminf_{t \to \infty}(r(t),r'(t))_1 > m + 2\delta\}.
	\end{multline*}
	
	Notice that $U(p,m)$ is a neighborhood of $p$ in $\partial_{\infty} G$ (in fact, it is a basic neighborhood in the original definition of the topology on $\partial_{\infty} G$).
	
	Take any $p \in \partial_{\infty} G$ and any integer $m\geq 1$. Let $r_p$ be a geodesic ray representing $p$ and $w \in G$ be the element given by the initial segment of $r_p$ of length $m$. Let $g := w^{-1}$.
	
	Let $x,y \in U(p,m)$. We may assume that, by Proposition \ref{gpb}, there exist two geodesic rays $r_x$ and $r_y$ representing $x$ and $y$ in $U(p,m)$, respectively, such that
	\[
	\liminf_{t \to \infty}(r_x(t),r_p(t))_1 \geq m \qquad \textrm{and} \qquad \liminf_{t \to \infty}(r_y(t),r_p(t))_1 \geq m.
	\]
	Thus $(x,p)_1 \geq m$ and $(y,p)_1 \geq m$.
	
	Let $d_G$ denotes the word metric on $G$ with respect to $S$. Then 
	\begin{eqnarray*}
		&&(r_x(t), r_y(t))_1 - (g r_x(t), g r_y(t))_1
		\\
		& = & \frac{1}{2} \left( d_G(r_x(t), 1) - d_G(g r_x(t), 1) + d_G(gr_y(t), 1) - d_G(r_y(t), 1) \right).
	\end{eqnarray*}
	for every $t$.
	
	Let $t$ be sufficiently large, i.e., $d_G(r_x(t), 1) \geq m$ and $d_G(r_y(t),1) \geq m$. Notice that, by the definition of Gromov product, the initial segments of length $m$ of $r_x$ and $r_p$ are $2\delta$-close in Hausdorff distance. Then there exists a number $t'$ such that $d_G(r_x(t'),1) \leq m$ and $d_G(r_x(t'), w) \leq 2 \delta$.
	Thus, by analyzing the geodesic triangles, for sufficiently large $t$ we have
	\begin{equation*}
	d_G(r_x(t), 1) - d_G(g r_x(t), 1) = d_G(r_x(t), 1) - d_G(r_x(t), w) \leq m
	\end{equation*}
	and
	\begin{eqnarray*}
	d_G(r_x(t), 1) - d_G(g r_x(t), 1) &=& d_G(r_x(t), 1) - d_G(r_x(t), w) 
	\\
	&=& d_G(r_x(t), r_x(t')) - d_G(r_x(t), w) + d_G(r_x(t'), 1)
	\\
	& \geq& m - 4\delta.
	\end{eqnarray*}
	
	\begin{figure}[htbp]
		\centering
		\includegraphics[width=2in]{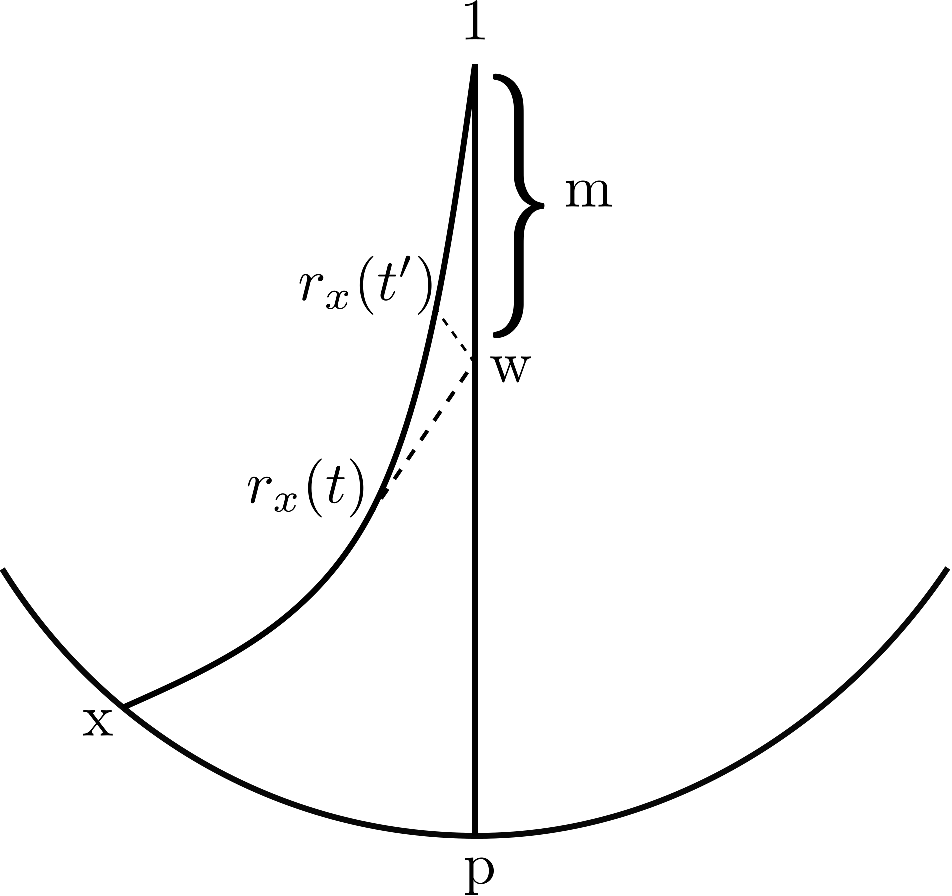}
		\caption{Geodesic triangles that involve $1$, $w$, $r'(t)$ and $r(t)$.}
	\end{figure}
	
	Similarly, for sufficiently large $t$ we also have
	\begin{equation}\label{gty}
	m - 4\delta \leq d_G(r_y(t), 1) - d_G(g r_y(t), 1) = d_G(r_y(t), 1) - d_G(r_y(t), w) \leq m.
	\end{equation}
	This finally shows that
	\begin{equation}\label{egr}
	m - 4\delta \leq (r_x(t), r_y(t))_1 - (g r_x(t), g r_y(t))_1 \leq m
	\end{equation}
	for sufficiently large $t$.
	
	By Proposition \ref{gpb}, we have that
	\begin{equation}\label{gtb}
	|(x,y)_1 - \liminf_{t \to \infty}(r_x(t),r_y(t))_1| \leq 2\delta.
	\end{equation}
	
	Similarly, $gr_x, gr_y$ are two geodesic rays representing $gx$ and $gy$ on $\partial_{\infty} G$, respectively. Thus
	\begin{equation}\label{agtb}
	|(gx,gy)_1 - \liminf_{t \to \infty}(gr_x(t),gr_y(t))_1| \leq 2\delta.
	\end{equation}
	
	Combining inequalities \eqref{egr} with \eqref{gtb} and \eqref{agtb}, we have
	\[
	m - 8\delta \leq (x,y)_1-(gx,gy)_1 \leq m + 4\delta.
	\]
	Namely, $(x,y)_1$ equals $m+(gx,gy)_1$ up to an additive error of $8 \delta$.
	
	Hence
	\[
	d(gx,gy) \leq C_2 \cdot a^{-(gx,gy)_1} \leq C_2 \cdot a^{m+4\delta} a^{-(x,y)_1} \leq \frac{C_2}{C_1} \cdot a^{m+4\delta}d(x,y),
	\]
	and
	\[
	d(gx,gy) \geq C_1 \cdot a^{-(gx,gy)_1} \geq C_1 \cdot a^{m-8\delta} a^{-(x,y)_1} \geq \frac{C_1}{C_2} \cdot a^{m-8\delta}d(x,y).
	\]
	Thus
	\[
	\frac{C_1}{C_2} \cdot a^{m-8\delta}d(x,y) \leq d(gx,gy) \leq \frac{C_2}{C_1} \cdot a^{m+4\delta}d(x,y).
	\]
	Notice that
	\[
	B(p, C_1 \cdot a^{-(m+4\delta)}) \subset \{q \in \partial_{\infty} G: (p,q)_1\ge m + 4\delta\} \subset U(p,m).
	\]
	Let $B(p,r)$ be a ball and $C_1 \cdot a^{-(l+4\delta+1)} \leq r \leq C_1 \cdot a^{-(l+4\delta)}$, then there exists a $f$ such that
	\[
	\frac{C_1^2}{a^{1+12\delta} C_2}\frac{1}{r} d(x,y) \leq \frac{C_1}{C_2} \cdot a^{l-8\delta}d(x,y) \leq d(fx,fy) \leq \frac{C_2}{C_1} \cdot a^{l+4\delta}d(x,y) \leq C_2\frac{1}{r} d(x,y)
	\]
	for any $x,y \in B(p,r)$. So $\partial_{\infty} G$ is quasi-self-similar. Thus finishes the proof.
\end{proof}

\begin{Remark}	
	
	Let $H$ be a finite index subgroup of $G$. Then $H$ is hyperbolic if and only if $G$ is hyperbolic and in this case $\partial_{\infty} H = \partial_{\infty} G$. A group $G$ is \emph{virtually free} if there exists a finite index subgroup of $G$ which is free. The boundary at infinity of a virtually free group is not only quasi-self-similar, but also admit a stronger structure. Let $H$ be the free subgroup of $G$ and we denote by $d(x,y) = 2^{-(x,y)_1}$ for any $x,y  \in \partial_{\infty} H$. It can be directly verified that $d$ is a visual metric since the Cayley graph of $H$ is a tree. Given a point $p \in \partial_{\infty} H$, we denote by $w$ the element given by the initial segment of $p$ of length $1$ and let $g:=w^{-1}$.  It follows the proof of Theorem \ref{hgbqss} that $d(gx,gy) = 2d(x,y)$ for any $x,y \in B(p, 1/2)$.
\end{Remark}

Let $X$ be a proper hyperbolic metric space and $p$ be a chosen base point. Let $G$ be a group acting on $X$ by isometries. We say $\Lambda(G)$ is the \emph{limit set} of $G$ on $X$ if
\begin{multline*}
\Lambda(G) := \{[\{g_n p\}] \in \partial_{\infty} X: \exists \ g_n \in G \ \textrm{such that} \ 
\\
\{g_n p\}_{n=1}^\infty \ \textrm{is a sequence converging to infinity in} \ X \}.
\end{multline*}
This definition is still independent of the base point. One can also define the \emph{conical limit set} $\Lambda_c(G)$ as the collection of points $q \in \Lambda(G)$ which are approximated by a sequence from the orbit $G p$ such that this sequence is contained in a bounded neighborhood of some geodesic ray $r$ with $[r] = q$. In this situation, we also get a corresponding translation action of $G$ on $\Lambda_c(G)$.

In next step, we are trying to generalize Theorem \ref{hgbqss} to the conical limit set. However, the idea in Theorem \ref{hgbqss} meets obstacles when dealing with the conical limit set. We transfer to a new method: Expanding cover.

A sequence of maps $f_i: U_i \to Y$ is called an \emph{expanding cover} of $X$ if there exists a constant $L_i > 1$ such that
\[
d_X(f_i(x_1),f_i(x_2)) \geq L_i \cdot d_X(x_1, x_2)
\]
for any $x_1, x_2 \in U_i$ and $\bigcup_i U_i =X$.
If there exists a ``controlled" expanding cover on a metric spaces, then it is quasi-self-similarity. The following theorem follows the same idea of the distortion lemma on p.$42$ of \cite{Su82}. For the sake of completeness, we outline a proof of it.

\begin{Theorem}\label{epc}
	Let $X$ be a compact metric space and $\{U_i\}_{i=1}^N$ be a finite open cover of $X$. If for each $i$, there exists $f_i: U_i \to X$, $L_i > 1$, $\alpha_i \geq 1$, and $C_i > 0$ such that
	\begin{enumerate}
		\item $d_X(f_i(x),f_i(y)) \geq L_i \cdot d_X(x,y)$\label{expand};
		\vspace{0.1in}
		\item $\frac{d_X(f_i(x),f_i(y))}{d_X(x,y)}-\frac{d_X(f_i(z),f_i(w))}{d_X(z,w)} \leq C_i \cdot \diam(\{x,y,z,w\})^{\alpha_i}$\label{control};
	\end{enumerate}
	for every $x,y,z,w \in U_i$, then $X$ is quasi-self-similar.
\end{Theorem}

\begin{proof}
	Let $r_0$ be the Lebesgue number of $\{U_i\}_{i=1}^N$, i.e., a number that any subset of $X$ with diameter less than it is contained in one of $U_i$. For any $B(x, r)$ with $r<r_0$, $B(x,r)$ should be a subset of one of $\{U_i\}_{i=1}^N$. Applying the corresponding $f_i$ to $B(x,r)$ maps  it to a larger image. Iterating this action until we get an image whose diameter is no smaller than $r_0$.
	
	Suppose that $B(x,r)=B_{i_1} \subset U_{i_1}$, and after iterating $j$ times we expand $B_{i_1}$ to $B_{i_j} \subset U_{i_j}$. We assume that $B_{i_n}$ is the first one whose diameter is larger than $r_0$. It is sufficient to prove that there exists a constant $H > 1$ independent of $n$ such that
	\[
	 \frac{d_X(x_n,y_n)}{d_X(x_1,y_1)} \leq H \cdot \frac{d_X(x'_n,y'_n)}{d_X(x'_1,y'_1)}
	\]
	where $x_1,y_1,x'_1,y'_1 \in B_{i_1}$ and $x_j, y_j, x'_j,y'_j$ are the corresponding images of them in $B_{i_j}, j = 1, \ldots, n$. Since if we let $d_X(x_1',y_1') \geq r/2$, then
	\[
	\frac{d_X(x_n,y_n)}{d_X(x_1,y_1)} \leq H \cdot \frac{d_X(x'_n,y'_n)}{d_X(x'_1,y'_1)} \leq 2H \frac{\diam(X)}{r_0} \frac{r_0}{r}.
	\]
	Moreover, if we let $d_X(x_n,y_n) \geq r_0/2$, then
	\[
	 \frac{d_X(x'_n,y'_n)}{d_X(x'_1,y'_1)} \geq \frac{1}{H}\frac{d_X(x_n,y_n)}{d_X(x_1,y_1)} \geq \frac{1}{2H} \frac{r_0}{r}.
	\]
	We denote by $D_j := \diam(\{x_j, y_j, x'_j,y'_j\})$.
	
	By conditions \eqref{expand} and \eqref{control}, we have
	\[
	\left(\frac{d_X(f_i(x),f_i(y))}{d_X(x,y)}-\frac{d_X(f_i(z),f_i(w))}{d_X(z,w)}\right)/\left( \frac{d_X(f_i(z),f_i(w))}{d_X(z,w)} \right) \leq \frac{C_i}{L_i} \cdot \diam(\{x,y,z,w\})^{\alpha_i},
	\]
	which implies
	\begin{eqnarray*}
		\frac{d_X(f_i(x),f_i(y))}{d_X(x,y)}/\frac{d_X(f_i(z),f_i(w))}{d_X(z,w)} & \leq & 1+\frac{C_i}{L_i} \cdot \diam(\{x,y,z,w\})^{\alpha_i}
		\\
		& \leq & e^{\lambda \cdot \diam(\{x,y,z,w\})}
	\end{eqnarray*}
	for any $x,y,z,w \in U_i$ and for any $i$. Here $\lambda$ is a constant which depends on $\{L_i\}, \{\alpha_i\}$ and $\{C_i\}$.
	
	\begin{eqnarray*}
		&&\log\left( \frac{d_X(x_n,y_n)}{d_X(x_1,y_1)}/\frac{d_X(x'_n,y'_n)}{d_X(x'_1,y'_1)} \right)  
		\\
		& =  & \ \sum_{j=1}^{n-1}\log\left( \left( \frac{d_X(x_{j+1},y_{j+1})}{d_X(x_{j},y_{j})} \right)/\left( \frac{d_X(x'_{j+1},y'_{j+1})}{d_X(x'_{j},y'_{j})} \right) \right)
		\\
		& \leq & \sum_{j=1}^{n-1} \log \left( e^{\lambda D_{j}} \right)
		\\
		& \leq & \lambda \sum_{j=1}^{n-1} D_{j}.
	\end{eqnarray*}
	
	Let $L = \min_i\{L_i\}$, then $D_j \leq \diam(B_{i_j}) \leq \diam(B_{i_{j+1}})/L$ and $\diam(B_{i_n}) \leq \diam(X)$. Thus $\{D_j\}$ is bounded by a geometric series and we finish the proof.
\end{proof}

We claim that there exists an expanding cover on the conical limit set.

\begin{Corollary}\label{ls}
	Let $(X, d_X)$ be a proper hyperbolic metric space, $G$ be a finitely generated group acting on $X$ by isometries and $\Lambda_c(G)$ be the conical limit set of $G$ on $X$. Then there exists an expanding cover of $\Lambda_c(G)$ for any visual metric on $\partial_{\infty} X$.
\end{Corollary}

\begin{proof}
The proof of Corollary \ref{ls} is in the same idea as the proof of Theorem \ref{hgbqss}.

We fix a base point $p$. Taking any $q \in \Lambda_c(G)$ and denote by $[q_n]:=q$ where $\{q_n\} \in Gp$. Since $q \in \Lambda_c(G)$ , we may assume that there exists a geodesic ray $\gamma_q$ such that $\gamma_q(0) =p$, $[\gamma_q] =q$ and the distance between $q_n$ and $\gamma_q$ is bounded by $H$. Let $q_i$ be an element in $\{q_n\}$ such that $m \geq d_X(q_{i}, p) \geq m-1$ for some integer $m$ where $m > H + 8\delta$. We denote by
\begin{multline*}
U(q,m) := \{x \in \Lambda_c(G): \exists \ \textrm{some sequence} \ \{q_n\}, \{x_n\}
\\
\textrm{such that} \ [\{q_n\}] = q, [\{x_n\}] = x \ \textrm{and} \ \liminf_{i,j \to \infty}(q_i,x_j)_p \geq m+2\delta\}.
\end{multline*}

Let $w \in G$ be the element given by $q_{i} := w p$ and we denote by $g := w^{-1}$.

Let $x,y \in U(q,m)$. We may assume that there exist two geodesic rays $r_x$ and $r_y$ representing $x$ and $y$ in $U(p,m)$, respectively, such that
\[
\liminf_{t \to \infty}(r_x(t),r_q(t))_p \geq m \qquad \textrm{and} \qquad \liminf_{t \to \infty}(r_y(t),r_q(t))_p \geq m.
\]
Thus $(x,p)_p \geq m$ and $(y,p)_p \geq m$.

Then 
\begin{eqnarray*}
	&&(r_x(t), r_y(t))_p - (g r_x(t), g r_y(t))_p
	\\
	& = & \frac{1}{2} \left( d_X(r_x(t), p) - d_X(g r_x(t), p) + d_X(gr_y(t), p) - d_X(r_y(t), p) \right).
\end{eqnarray*}
for every $t$.

Let $t$ be sufficiently large, i.e., $d_X(r_x(t), p) \geq m+1$ and $d_X(r_y(t),p) \geq m+1$. Notice that, by the definition of Gromov product, the initial segments of length $m$ of $r_x$ and $r_p$ are $2\delta$-close in Hausdorff distance. Then there exists a number $t'$ such that $d_G(r_x(t'),p) \leq m$ and the distance between $r_x(t')$ and $r_q$ is less or equal than $2\delta$. Thus, by analyzing the geodesic triangles, for sufficiently large $t$ we have
\begin{eqnarray*}
	d_X(r_x(t), p) - d_X(g r_x(t), p) &=& d_X(r_x(t), p) - d_X(r_x(t), q_i) 
	\\
	&=& d_X(r_x(t), r_x(t')) - d_X(r_x(t), q_i) + d_X(r_x(t'), p)
	\\
	& \geq& m - H - 4\delta.
\end{eqnarray*}

\begin{figure}[htbp]
	\centering
	\includegraphics[width=2in]{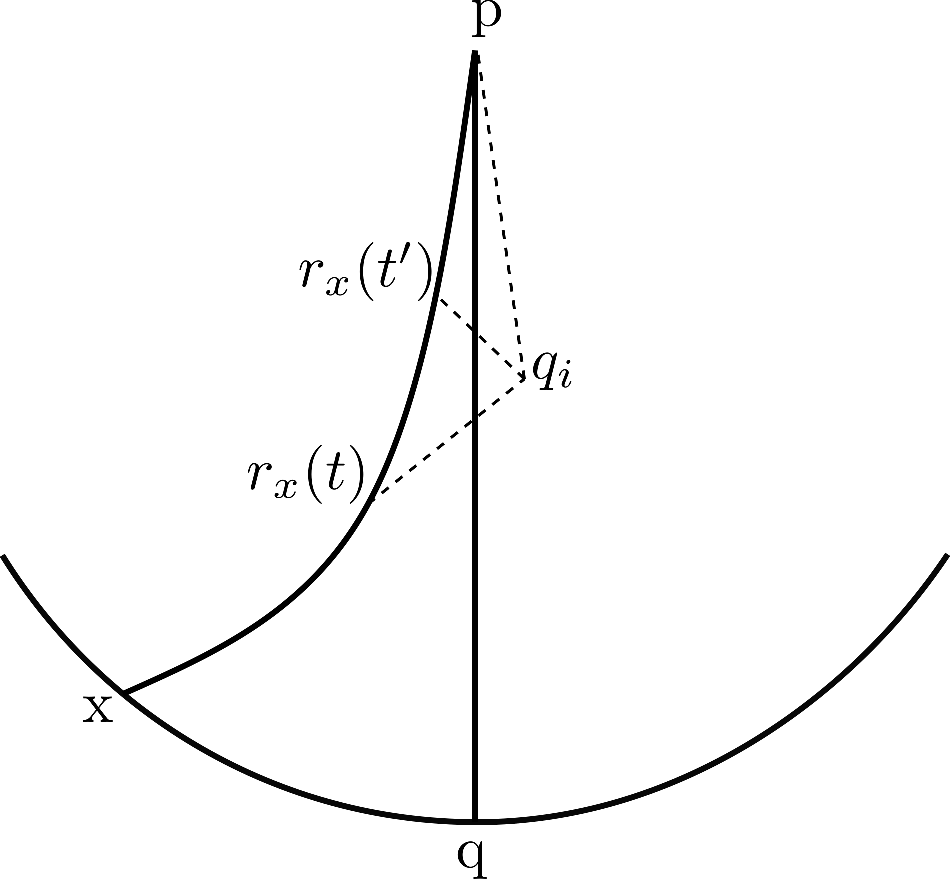}
	\caption{Geodesic triangles that involve $p$, $q_i$, $r'(t)$ and $r(t)$.}
\end{figure}

Similarly, for sufficiently large $t$ we also have
\begin{equation*}
d_X(r_y(t), p) - d_X(g r_y(t), p) \geq m -H - 4\delta.
\end{equation*}
This finally shows that
\begin{equation*}
(r_x(t), r_y(t))_p - (g r_x(t), g r_y(t))_p \geq m - H - 4\delta
\end{equation*}
for sufficiently large $t$.

Thus
\[
(x,y)_p-(gx,gy)_p \geq m - H - 8\delta.
\]
and there exists a constant $C > 1$ depends only on $\Lambda_c(X)$ such that
\[
d(gx,gy) \geq C \cdot a^{m-H - 8\delta}d(x,y).
\]

Then we finish the proof.
\end{proof}

To ``control" the above expanding cover, it requires more structure than just hyperbolicity. We introduce some new notions now.

We denote by $M_k^2$ the $2$-dimensional complete, simply connected, Riemannian manifold of constant sectional curvature $K \in \mathbb{R}$. It is unique up to isometry. We denote by $D_k$ the diameter of $M_k^2$. Readers may refer to Part I, Chapter $2$ of \cite{BH99} for more details.

Let $\Delta$ be a geodesic triangle in a length space $X$, i.e., a triangle with geodesic segments as its sides. $\Delta$ is called satisfying \emph{CAT$(k)$ inequality} if the following holds: Suppose $x, y, z \in X$ is the vertice of $\Delta$ and let $p,q$ be points on the sides connecting $x,y$ and $y,z$, respectively. Let $x',y',z' \in M_k^2$ such that $d(x,y) = d(x',y'), d(y,z) = d(y',z')$ and $d(x,z) = d(x',z')$. If $p',q'$ be any points on the geodesic segments connecting $x',y'$ and $y',z'$, respectively, such that $d(x, p) = d(x', p')$ and $d(x, q)  =d(x', q')$, then $d(p, q) \leq  d(p', q')$.

A length space $X$ is called \emph{CAT$(k)$} if every geodesic triangle with diameter less than $2D_k$ satisfies the CAT$(k)$ inequality when $k >0$ and every geodesic triangle satisfies the CAT$(k)$ inequality when $k \leq 0$.

Notice that any CAT$(k)$ space with $k < 0$ is also hyperbolic.

In \cite{Bo96}, Bourdon shows that if X is a proper CAT($-b^2$) space, then for each number $a \in (1, e^b]$ and each $p \in X$, the formula $d_p(\xi,\xi') := a^{-(\xi,\xi')_p}$ defines a metric on $\partial_\infty X$.

Let $x,y \in X$, $\xi \in \partial_{\infty} X$ and $\gamma$ be a geodesic ray representing $\xi$. We denote by the \emph{Busemann function}
\[
B_\xi(x) := \lim_{t \to \infty} \left(d_X(x, \gamma(t)) - t \right)
\]
and
\[
B_\xi(x,y) : = B_\xi(x) - B_\xi(y) = \lim_{t \to \infty}\left(d_X(x, \gamma(t))-d_X(y, \gamma(t))\right).
\]
Notice that these definitions are independent of the choice of $\gamma$.

The following two results are selected from \cite{Bo93} , which give a ``conformal" structure on the limit set. Readers may refer to Proposition $2.6.1$ and Corollary $2.6.3$ in \cite{Bo93} for more details.

\begin{Lemma}[Bourdon]\label{cscs}
	Let $X$ be a proper CAT($k$) space with $k < 0$, $p$ be the chosen base point, $g \in \mathrm{Isom}(X)$ and $\xi,\xi'$ be any two points on $\partial_\infty X$. There exists a visual metric $d$, where $a$ is the visual parameter of $d$, on $\partial_\infty X$, such that
	\[
	\frac{d(g\xi,g\xi')}{d(\xi,\xi')} =   a^{\frac{1}{2}\left(B_\xi(p,g^{-1}p)+B_{\xi'}(p,g^{-1}p)\right)}.
	\]
\end{Lemma}

\begin{Lemma}[Bourdon]\label{csmls}
	Let $X$ be a proper hyperbolic space and $p$ be the chosen base point. Let $d$ be any visual metric defined on $\partial_\infty X$, then for any $x, y$ in $X$ the function on $(\partial_{\infty} X,d)$, defined by:
	\[
	\xi \longrightarrow B_{\xi}(x,y)
	\]
	is Lipschitz.
\end{Lemma}
\begin{Remark}
	The original results in \cite{Bo93} are proved for a specifically chosen visual metric on the boundary at infinity of CAT($-1$) spaces; however, we can generalize it to CAT($k$) spaces with $k <0$ but still follow the same proof. The main reason is that for each number $a \in (1, e^b]$ and each $p \in X$, $d_p(\xi,\xi') := a^{-(\xi,\xi')_p}$ defines a metric on $\partial_\infty X$.
\end{Remark}

\begin{Theorem}\label{cls}
	Let $(X, d_X)$ be a proper CAT($k$) space with $k < 0$, $p$ be a chosen base point, $G$ be a finitely generated group acting on $X$ by isometries and $\Lambda_c(G)$ be the conical limit set of $G$ on $X$. If $\Lambda_c(G)$ is compact, then $\Lambda_c(G)$ is quasi-self-symmetric for any visual metric on $\partial_{\infty} X$.
\end{Theorem}

\begin{proof}	
	Let $d$ be the visual metric on $\partial_{\infty} X$ defined in Lemma \ref{cscs} and $a$ is the visual parameter. Since any two visual metrics are quasisymmetric by Proposition \ref{hs},  it is sufficient to prove that $(\Lambda_c(G),d)$ is quasi-self-similar. Let $g$ and $U(q,m)$ be the notation defined in Corollary \ref{ls}. It comes from Theorem \ref{epc} and Corollary \ref{ls} that we only need to prove the following result:  There exists a constant $C>0$ depends only on $g$ and $U(q,m)$ such that
	\[
	\frac{d(gx,gx')}{d(x,x')} - \frac{d(gy,gy')}{d(y,y')} \leq  C \cdot \diam(x,x',y,y')
	\]
	for any $x,x',y,y' \in U(q,m)$.
	
	\[
	\begin{aligned}
	\frac{d(gx,gx')}{d(x,x')} - \frac{d(gy,gy')}{d(y,y')} & = a^{\frac{1}{2}\left(B_x(p,g^{-1}p)+B_{x'}(p,g^{-1}p)\right)} - a^{\frac{1}{2}\left(B_y(p,g^{-1}p)+B_{y'}(p,g^{-1}p)\right)}
	\\
	& \leq C_1\left|\left(B_x(p,g^{-1}p), B_{x'}(p,g^{-1}p)\right)-\left(B_y(p,g^{-1}p), B_{y'}(p,g^{-1}p)\right)\right|
	\\
	& \leq C_1 C_2 \sqrt{d(x,y)^2 + d(x',y')^2}
	\\
	& \leq C \cdot \diam(x,x',y,y')
	\end{aligned}
	\]
	The first inequality comes from the mean value theorem for function $f(x,y) = a^{\frac{1}{2}(x+y)}$ and $a^{\frac{1}{2}\left(B_x(p,g^{-1}p)+B_{x'}(p,g^{-1}p)\right)}$ is uniformly bounded and the second inequality comes from Lemma \ref{csmls}.
\end{proof}

In the rest of this section, we always assume that the boundary at infinity of a hyperbolic space contains more than two points(i.e., non-elementary).

If $X$ is any compact metrizable space that has at least three points, we denote the space of distinct triples of $X$ by $\Sigma_3(X)$. Namely,
\[
\Sigma_3(X) = \{(o,p,q) \in X^3: o \neq p, o \neq q, p \neq q \}.
\]
Assume that a group $G$ acts on $X$ by homeomorphisms. Such an action induces a diagonal action of $G$ on $\Sigma_3(X)$. If the action of $G$ on $\Sigma_3(X)$ is properly discontinuous and cocompact, we say $G$ acts on $X$ as a \emph{uniform convergence group}. 

Let $G$ be a group acting on a proper hyperbolic metric space by isometries. If $G$ acts on $\Lambda(G)$ as a uniform convergence group, then $G$ is a hyperbolic group and $\Lambda(G) = \Lambda_c(G)$. Readers may see Section $5$ of \cite{KB02} for a reference.

$\Gamma$ is a \emph{Kleinian group} if $\Gamma$ is a discrete subgroup of isometries of hyperbolic space $\mathbb{H}^n, n\geq 3$. Let $\textrm{Hull}(\Lambda(\Gamma)) \subset \mathbb{H}^n$ denotes the \emph{convex hull} of the limit set, i.e., the smallest convex set containing all geodesics with both endpoints in $\Lambda(\Gamma)$. Then $\Gamma$ acts \emph{convex cocompactly} on $\mathbb{H}^n$ if $\textrm{Hull}(\Lambda(\Gamma)) / \Gamma$ is compact. If $\Gamma$ is a convex cocompact Kleinian group, then $\Gamma$ acts on $\Lambda(\Gamma)$ as a uniform convergence group. 

Recall that $\mathbb{H}^n$ with its standard metric is a CAT($-1$) space. Theorem \ref{cls} directly induces the next result.

\begin{Corollary}\label{kge}
	Let $\Gamma$ be a Kleinian group that acts on $\Lambda(\Gamma)$ as a uniform convergence group. Then $\Lambda(\Gamma)$, equipped with any visual metric, is quasi-self-symmetric.
\end{Corollary}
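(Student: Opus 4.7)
The plan is to combine Corollary \ref{ls} with Theorem \ref{epc}: the former gives a finite expanding cover of $\Lambda(\Gamma)$, and the latter upgrades this to quasi-self-similarity once we verify the Hölder-type control on the ``derivative'' $d(gx,gy)/d(x,y)$. The only extra input needed is that elements of $\Gamma$ act on $\mathbb{S}^{n-1}$ as Möbius (hence smooth) diffeomorphisms, so their action has bounded second-order behavior on compact subsets.

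First I would record the two structural observations. Since $\Gamma$ acts on $\Lambda(\Gamma)$ as a uniform convergence group, the discussion preceding this corollary gives that $\Gamma$ is hyperbolic and $\Lambda(\Gamma) = \Lambda_c(\Gamma)$. Next, the spherical metric on $\mathbb{S}^{n-1} = \partial_\infty \mathbb{H}^n$ (or equivalently the chordal metric, to which it is bi-Lipschitz equivalent) is a visual metric on $\partial_\infty \mathbb{H}^n$ with respect to the origin of the Poincaré ball model. Restricting this metric to $\Lambda(\Gamma)$ and applying Corollary \ref{ls} then yields a finite cover $\{U_i\}_{i=1}^N$ of $\Lambda(\Gamma)$ together with elements $g_i \in \Gamma$ and constants $L_i > 1$ such that
\[
d(g_i x, g_i y) \geq L_i \cdot d(x,y) \qquad \text{for all } x,y \in U_i,
\]
giving hypothesis \eqref{expand} of Theorem \ref{epc}.

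The main step is verifying the control hypothesis \eqref{control}. Each $g_i$ acts on $\mathbb{S}^{n-1}$ as a Möbius transformation, which is a $C^\infty$-diffeomorphism with uniformly bounded first and second derivatives on any compact set. Using a second-order Taylor expansion (in normal coordinates on $\mathbb{S}^{n-1}$) one gets that for any compact subset $K$ of $\mathbb{S}^{n-1}$ there is a constant $C_i$ such that
\[
\left| \frac{d(g_i x, g_i y)}{d(x,y)} - \|Dg_i(x)\| \right| \leq C_i \cdot d(x,y)
\qquad \text{for all } x,y \in K,\ x \neq y.
\]
Applying this to both $y$ and $z$ and subtracting yields
\[
\left| \frac{d(g_i x, g_i y)}{d(x,y)} - \frac{d(g_i x, g_i z)}{d(x,z)} \right|
\leq C_i \bigl( d(x,y) + d(x,z) \bigr) \leq 2 C_i \cdot \diam(\{x,y,z\}),
\]
so condition \eqref{control} holds on $U_i$ with exponent $\alpha_i = 1$.

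With both hypotheses of Theorem \ref{epc} verified for the cover $\{U_i\}$ of the compact space $\Lambda(\Gamma)$, that theorem immediately concludes that $\Lambda(\Gamma)$ equipped with the spherical metric is quasi-self-similar. The most delicate point is the second-order estimate above, which is where one really uses that $\Gamma$ consists of Möbius transformations rather than arbitrary quasisymmetries; everything else is bookkeeping on top of the already-established Corollary \ref{ls}.
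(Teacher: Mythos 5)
Your proposal is correct and follows essentially the same route as the paper: identify the spherical metric as a visual metric, note $\Lambda(\Gamma)=\Lambda_c(\Gamma)$, obtain a finite expanding cover from Corollary \ref{ls}, and upgrade it to a controlled cover using that each $g\in\Gamma$ acts as a M\"obius transformation with bounded first and second derivatives on the compact limit set, then conclude via Theorem \ref{epc}. Your Taylor-expansion argument simply makes explicit the step the paper summarizes as ``it is a controlled expanding cover due to the boundedness of $g'$ and $g''$.''
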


\begin{Remark}
	Corollary \ref{kge} is a generalization of a classical result, i.e., Corollary $2.66$ on \cite{Ap00}.
\end{Remark}

The following theorem from \cite{Me14} illustrates a local rigidity result about the limit sets of Kleinian groups which are Schottky sets.

\begin{Theorem}[Merenkov]\label{lsss}
	Suppose that $\Gamma$ and $\tilde{\Gamma}$ are Kleinian groups whose limit sets $S$ and $\tilde{S}$ are Schottky sets, respectively. We assume that $\Gamma$ act on $S$ and $\tilde{\Gamma}$ act on $\tilde{S}$ as uniform convergence groups. If $f : A \to S$ is a quasiconformal embedding defined on an open (in relative topology) connected subset $A$ of $S$, then f has to be the restriction of a M\"obius transformation that takes $S$ onto $\tilde{S}$.
\end{Theorem}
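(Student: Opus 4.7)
The plan is to show that the quasiconformal map $f$, a priori only defined on the open piece $A \subset S$, must locally respect the peripheral circle structure of the Schottky set, then invoke a Liouville-type rigidity statement to conclude $f$ coincides with a M\"obius transformation on $A$, and finally use the convergence group actions to upgrade this to a global statement relating $S$ and $\tilde{S}$.

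First I would observe that the peripheral circles of a Schottky set admit a topological characterization: a round circle $C \subset S$ is peripheral if and only if it bounds a complementary disk of $S$ in $\mathbb{S}^2$, and this is detectable from any neighborhood of $C$ in $S$ using the combinatorics of separating arcs and the structure of complementary components. Since $f$ is in particular a homeomorphism onto its image, the portion of any peripheral circle of $S$ lying inside $A$ must be mapped onto an arc of a peripheral circle of $\tilde{S}$. Thus every peripheral circle of $S$ that meets $A$ is mapped into a round circle of $\mathbb{S}^2$. The uniform convergence group action of $\Gamma$ on $S$ now provides density: the $\Gamma$-orbit of any peripheral circle is dense in $S$, so the peripheral arcs meeting $A$ constitute a sufficiently rich, non-degenerate family of round arcs in $A$.

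Next I would apply a classical Liouville-type rigidity result: a quasiconformal homeomorphism between planar domains that maps a dense, non-degenerate family of round circles to round circles must be the restriction of a M\"obius transformation. Combined with the density established above, this forces $f$ to agree on $A$ with the restriction of some M\"obius transformation $M$. To conclude the proof, I would verify $M(S) = \tilde{S}$: the image $M(S)$ is a Schottky set containing $f(A) \subset \tilde{S}$, and using that $\tilde{\Gamma}$ acts on $\tilde{S}$ as a uniform convergence group so that $\tilde{S}$ is dynamically characterized as the closure of $\tilde{\Gamma}$-orbits of peripheral circles, one pushes $M(S)$ and $\tilde{S}$ to coincide on a dense subset, hence on the whole compact set.

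The main obstacle I anticipate is the first step, namely showing rigorously that $f$ really maps peripheral arcs of $S$ in $A$ to peripheral arcs of $\tilde{S}$. The topological characterization of peripherality must be verified carefully near each circle, and this is precisely where the hypothesis of a uniform convergence group action enters: it ensures that peripheral circles are abundant and that $A$ inherits enough of the global structure of $S$ to distinguish peripheral arcs from arbitrary subarcs, via the condition that the complement of any Schottky set consists of at least three disjoint round disks with pairwise disjoint closures.
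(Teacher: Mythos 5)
First, note that the paper does not prove this statement at all: Theorem \ref{lsss} is quoted verbatim as Theorem 1.1 of \cite{Me14} and is used as a black box to deduce Theorem \ref{lsigm}. So there is no internal proof to compare against; your proposal has to be judged against the actual argument in \cite{Me14} (which in turn rests on the Schottky set rigidity machinery of \cite{BKM09}).

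Judged on those terms, there is a genuine gap at the central step. Since $\Gamma$ acts on $S$ as a uniform convergence group, it is convex cocompact and $S \neq \mathbb{S}^2$, so $S$ has Lebesgue measure zero; in particular $S$ has empty interior and $A$, being relatively open in $S$, is a measure-zero subset of the sphere, not a planar domain. The ``classical Liouville-type rigidity result'' you invoke --- that a quasiconformal homeomorphism between planar domains sending a dense family of round circles to round circles is M\"obius --- therefore has nothing to apply to: $f$ is only defined on $A \subset S$, and no version of Liouville's theorem speaks about maps defined on measure-zero sets. This is exactly the difficulty that the Bonk--Kleiner--Merenkov approach is designed to overcome: one first \emph{extends} the quasisymmetric map on the Schottky set to a genuine quasiconformal homeomorphism of a domain in $\mathbb{S}^2$ by iterated reflections in the peripheral circles (using that $f$ sends peripheral circles to peripheral circles, which is the one step you do identify correctly), then observes that the dilatation of the extension is supported on the orbit of the measure-zero set $S$ under the reflection group, hence the extension is $1$-quasiconformal and M\"obius by the honest Liouville theorem. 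Your proposal omits this extension entirely, and it is the heart of the proof. A secondary weakness is the local-to-global step: knowing $M(A) \subset \tilde{S}$ does not by itself force $M(S) = \tilde{S}$; in \cite{Me14} the cocompactness of the action on triples is used quantitatively (rescaling small relatively open pieces to definite size via group elements) rather than merely through density of orbits of peripheral circles, and your closing paragraph does not supply an argument that would substitute for this.
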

Recall that a \emph{Schottky set} is a compact subset of $\mathbb{S}^2$ whose complement is a union of at least three open round discs whose closures have empty intersection. The original theorem in \cite{Me14} requires that the Kleinian groups act on their limit sets cocompactly on triples, but this is equivalent to that the group acting as a uniform convergence group. See \cite{GM87} for a reference. The original theorem also requires $f$ to be a quasiregular map, which is equivalent to a quasiconformal embedding.

Finally, we apply Thereon \ref{wttob} and Corollary \ref{kge} to $S$ and $\tilde{S}$ to induce a quasisymmetry from an open neighborhood of $S$ to an open neighborhood of $\tilde{S}$. Since every quasisymmetry is quasiconformal, this finises the proof of Theorem \ref{lsigm} by applying Theorem \ref{lsss}.

\section{Visual spheres of expanding Thurston maps}\label{VSETM}

An expanding Thurston map $f: S^2 \to S^2$ is a  branched covering map on a topological sphere that locally expands $S^2$. We investigate these maps in this section and apply our results to visual spheres of expanding Thurston maps. The main reference of this section is \cite{BM17}.

Let $S^2$ be a topological sphere and $f:S^2 \to S^2$ be a branched covering map on $S^2$ with $\deg(f) \geq 2$. A point $p \in S^2$ is a \emph{critical point} of $f$ if $f$ is not a local homeomorphism near $p$. We denote by $\textrm{crit}(f)$ the set of all critical points of $f$ and $f^n$ the $n$-th iteration of $f$. The \emph{postcritical points} of $f$ are given by
\[
\post(f) := \bigcup_{n \geq 1}\{ f^n(p): p \in \textrm{crit}(f) \}.
\]
We say $f$ is a \emph{Thurston map} if $f:S^2 \to S^2$ is a branched covering map such that $\deg(f) \geq 2$ and $\post(f)$ is finite.

Let $f : S^2 \to S^2$ be a Thurston map and $\mathcal{C}$ be a Jordan curve in $S^2$ with $\post(f) \subset \mathcal{C}$. We fix a base metric $d$ on $S^2$ that induces the given topology on $S^2$. The Jordan curve $\mathcal{C}$ divides $S^2$ into two parts and each of them(including the boundary) is called a \emph{$0$-tile}. The preimages of $0$-tiles under $f^n$ divide $S^2$ into what are called \emph{$n$-tiles}. For any $n \in \mathbb{N}$ we denote by $\textrm{mesh}(f, n, \mathcal{C})$ the supremum of the diameters of all $n$-tiles. A Thurston map $f : S^2 \to S^2$ is \emph{expanding} if there exists a Jordan curve $\mathcal{C} \subset S^2$ with $\post(f) \subset \mathcal{C}$ such that $\lim\limits_{n \to \infty} \textrm{mesh}(f, n, \mathcal{C}) = 0$. Notice that it is a topological property, i.e., it is independent of the choice of the base metric.

If $x \neq y$, we define
\[
m(x, y) = \max\{n \in \mathbb{N} \cup \{0\}: \exists \ \textrm{non-disjoint $n$-tiles} \ X \textrm{and} \ Y \ \textrm{such that} \ x \in X, \ y \in Y \}.
\]

If $x = y$, we define $m(x, y) := \infty$.

Let $f : S^2 \to S^2$ be a Thurston map. A metric $\rho$ on $S^2$ is called a \emph{visual metric} for $f$ if there exists a Jordan curve $\mathcal{C}$ in $S^2$ with $\post(f) \subset \mathcal{C}$, a parameter $\Lambda > 1$ and $C_1, C_2 > 0$ such that
\begin{equation}
C_1\Lambda^{-m(x,y)} \leq \rho(x,y) \leq C_2\Lambda^{-m(x,y)}
\end{equation}
for all $x, y \in S^2$. Here we define $\Lambda^{-\infty} := 0$. The number $\Lambda$ is called the \emph{expansion factor} of the visual metric and $C_1, C_2$ are independent of $x$ and $y$.

The following proposition illustrates some properties of the visual metrics on $S^2$. Readers may refer to Proposition $8.3$, Theorem $16.3$ and Theorem $18.1$ of \cite{BM17} for more details.

\begin{Proposition}\label{vmetm}
	Let $f : S^2 \to S^2$ be an expanding Thurston map, then
	\begin{enumerate}
		\item Every visual metric induces the standard topology on $S^2$.
		
		\item There exists a $\Lambda_0 > 1$ such that for any $\Lambda \in (1, \Lambda_0)$, there exists a visual metric on $S^2$ with expansion factor $\Lambda$. 
		
		\item Any two visual metrics are H\"older equivalent, and bi-Lipschitz equivalent if they have the same expansion factor $\Lambda$.
		
		\item Let $\rho$ be a visual metric for $f$. Then $(S^2, \rho)$ is doubling if and only if $f$ has no periodic critical points.

		\item A metric $\rho$ is a visual metric for some iterate $f^n$ with $n \in \mathbb{N}$ if and only if it is a visual metric for $f$.
	\end{enumerate}
\end{Proposition}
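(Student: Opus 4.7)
The proposition is a compendium of five standard properties of visual metrics for expanding Thurston maps, and my plan is to address each clause using the sandwich estimate $C_1\Lambda^{-m(x,y)} \leq \rho(x,y) \leq C_2\Lambda^{-m(x,y)}$ as the main tool, deferring the deepest construction and the doubling characterization to the Bonk--Meyer framework.

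For clause (1), I would argue that a sequence $x_n \to y$ in $\rho$ iff $m(x_n,y) \to \infty$ iff $x_n$ and $y$ eventually lie in common $n$-tiles for arbitrarily large $n$. Since $\mathrm{mesh}(f,n,\mathcal{C}) \to 0$ in the base metric $d$, the latter is equivalent to $x_n \to y$ in $d$. For clause (3), if $\rho_1, \rho_2$ are both visual with the same expansion factor $\Lambda$, then both satisfy the sandwich with the same $m(x,y)$, so $\rho_1(x,y)/\rho_2(x,y)$ is bounded above and below by constants, giving bi-Lipschitz equivalence. For different factors $\Lambda_1, \Lambda_2$, setting $\alpha = \log\Lambda_2/\log\Lambda_1$ yields $\rho_2(x,y) \asymp \rho_1(x,y)^\alpha$, giving H\"older equivalence. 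For clause (4), the key combinatorial fact is that $f$ maps every $n$-tile onto some $(n-1)$-tile, so if $x$ and $y$ lie in non-disjoint $n$-tiles, then $f(x)$ and $f(y)$ lie in non-disjoint $(n-1)$-tiles. Hence $m(f(x),f(y)) \geq m(x,y)-1$, and the sandwich gives $\rho(f(x),f(y)) \leq C_2\Lambda^{-m(x,y)+1} \leq (C_2\Lambda/C_1)\rho(x,y)$.

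Clause (2) is the real construction step. The plan is to define a candidate $\tilde{\rho}(x,y) := \inf \sum_{i=0}^{k-1}\Lambda^{-m(x_i,x_{i+1})}$ over finite chains $x = x_0, x_1, \ldots, x_k = y$. The upper bound $\tilde{\rho}(x,y) \leq \Lambda^{-m(x,y)}$ is immediate by taking the trivial chain. The hard direction is the reverse inequality: one needs a separation lemma saying that for $\Lambda$ below some threshold $\Lambda_0 > 1$ (depending on combinatorial expansion data of $f$, essentially how many tiles of level $n$ can cluster around a given point), no chain can be much cheaper than the single-jump cost, yielding $\tilde{\rho}(x,y) \geq c\Lambda^{-m(x,y)}$. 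The threshold $\Lambda_0$ is determined by a geometric-series estimate controlling how finely one can subdivide the jump from $x$ to $y$ using intermediate points. This step is essentially Theorem 8.9 of Bonk--Meyer, and I would invoke it rather than reproduce the subdivision bookkeeping.

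Clause (5) is the main obstacle. The forward direction (doubling implies no periodic critical point) I would prove by contraposition: if $p$ is a periodic critical point of period $k$ and local degree $d \geq 2$, then iterating $f^k$ near $p$ produces, at each level $n$, a cluster of $d^n$ tiles of level $nk$ meeting at $p$, all of $\rho$-diameter comparable to $\Lambda^{-nk}$, packed inside a ball of comparable radius; the doubling constant would then have to exceed $d^n$ for every $n$. The reverse direction (no periodic critical points implies doubling) is the deep part: without periodic critical points, one shows that the number of $n$-tiles meeting any given tile of comparable level is uniformly bounded, using Lemma 18.1 of Bonk--Meyer which controls how fast the local degree of $f^n$ can grow at points outside the postcritical cycle. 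Combined with the fact that tiles at comparable $\rho$-scale have comparable diameter, this yields a uniform covering bound and hence doubling. I would state this characterization as Theorem 18.1 of Bonk--Meyer rather than recapitulate their argument, since it rests on tile-combinatorial machinery developed in several preceding chapters.
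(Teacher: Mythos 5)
The paper offers no proof of this proposition at all --- it is stated as background with a pointer to Chapters 8, 16 and 18 of \cite{BM17} --- so your proposal can only be judged on its own merits and against Bonk--Meyer, and on that basis it is essentially sound and follows the standard route. Clauses (3) and (4) are complete as written: bi-Lipschitz/H\"older equivalence is immediate from the sandwich estimate, and the inequality $m(f(x),f(y)) \geq m(x,y)-1$, coming from the fact that $f$ maps $n$-tiles onto $(n-1)$-tiles and preserves non-disjointness, is exactly the standard proof of Lipschitz continuity (it silently uses that $m(x,y)<\infty$ for $x \neq y$, which follows from $\mathrm{mesh} \to 0$). In clause (1), note the asymmetry you gloss over: $m(x_n,y) \to \infty$ gives $d(x_n,y) \to 0$ directly from the mesh condition, but the converse needs the observation that for each $n$ the union of the finitely many closed $n$-tiles \emph{not} containing $y$ is closed and avoids $y$, so every point sufficiently $d$-close to $y$ lies in an $n$-tile containing $y$; ``eventually lie in common $n$-tiles'' hides this Lebesgue-number-type step. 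Deferring clause (2) to the chain construction is right --- the engine is the quasi-ultrametric inequality $m(x,z) \geq \min\{m(x,y), m(y,z)\} - c$, which is where expansion enters and which makes your geometric-series estimate close for $\Lambda$ near $1$; your citation ``Theorem 8.9'' is misnumbered (the sharp range $(1,\Lambda_0(f)]$ with the combinatorial expansion factor is Theorem 16.3 of \cite{BM17}), but the hedge is harmless since the proposition only asserts \emph{some} $\Lambda_0 > 1$.

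The one substantive soft spot is the forward direction of clause (5). Having $d^n$ tiles of level $nk$, each of diameter comparable to $\Lambda^{-nk}$, all meeting at $p$ and packed in a comparable ball does \emph{not} by itself contradict doubling: arbitrarily many sets of large diameter can share a small covering family (think of many segments through a single point of the plane). What defeats doubling is separation, not diameter: the $nk$-tiles at $p$ have pairwise disjoint interiors and, by the inscribed-ball estimate for tiles with respect to a visual metric (Proposition 8.4 in \cite{BM17}), each contains a ball of radius $\gtrsim \Lambda^{-nk}$; these $d^n$ disjoint balls of comparable radius inside $B(p, C\Lambda^{-nk})$ force the doubling constant above $d^n$ for every $n$. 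With that repair --- and with the converse correctly referred to the uniform bound $\sup_{n,x} \deg_{f^n}(x) < \infty$ valid in the absence of periodic critical points, which underlies the doubling characterization in Chapter 18 of \cite{BM17} --- your outline matches the only proof on record, and is in fact more detailed than the paper, which cites exactly the same three chapters you do.
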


A visual sphere of an expanding Thurston map can be identified with the boundary at infinity of a certain Gromov hyperbolic space constructed from tiles. Readers may refer to Chapter $10$ of \cite{BM17} for more information.

The following proposition focuses on a specific visual metric $\rho_0$ constructed in Theorem $16.3$ of \cite {BM17}. Readers may refer to Theorem 1.0.5 of \cite{Wu19} for the proof.

\begin{Proposition}\label{vsqss}
	Let $f : S^2 \to S^2$ be an expanding Thurston map without periodic critical points. Then there exists a visual metric $\rho_0$ for $f$ and $\mathcal{C}$ such that the visual sphere $(S^2, \rho_0)$ is quasi-self-similar.
\end{Proposition}

\begin{Remark}
	If $\mathcal{C}$ is invariant under $f$, i.e., $f^n(\mathcal{C}) \subset \mathcal{C}$, then $\rho_0$ defined in Proposition \ref{vsqss} is a quasi-geodesic metric. Readers may refer to Chapter $16$ of \cite{BM17} for the construction.
\end{Remark}

We are prepared to prove Theorem \ref{vms} now. Notice that Wu also proved Theorem \ref{vms} in \cite{Wu19} with ideas from dynamics, here we give an alternative proof based on tools generated in this paper.

\begin{proof}[Proof of Theorem \ref{vms}] 
	\mbox{}
	
	$(1) \Longrightarrow (2)$: $S^2$ is doubling due to Proposition \ref{vmetm}; thus finishes the proof by Theorem \ref{QSweakT}.
	
	$(2) \Longrightarrow (3)$: It is trivial.
	
	$(3) \Longrightarrow (4)$: Notice that $(S, \rho)$ is quasi-self-symmetric by Proposition \ref{vmetm} and Proposition \ref{vsqss}. Let $T_pS^2$ be the weak tangent that is quasisymmetric to $\mathbb{R}^2$. Then there exists a $U \subset S^2$ which is quasisymmetrically embedded into $T_pX$ by Theorem \ref{wttob}.
	
	$(4) \Longrightarrow (1)$: Let $\rho_0$ be the visual metric defined in Proposition \ref{vsqss} where $\rho_0$ is constructed corresponding to $f$ and $\mathcal{C}$. Since any two visual metrics are quasisymmetric equivalent by Proposition \ref{vsqss}, it is sufficient to prove that $(S, \rho_0)$ is a quasi-sphere. We may assume that $\mathcal{C}$ is invariant under $f^{n}$ for some sufficiently large $n$ by Theorem $15.1$ in \cite{BM17}. Since $\rho_0$ is also a visual metric for $f^n$ and $\mathcal{C}$ by Proposition \ref{vmetm}, without loss of generality, we assume that in the following content $\mathcal{C}$ is invariant under $f$. 
	
	Let $U_1, U_2$ be the two $0$-tiles of $S^2$. Since $f$ is expanding, there exist two simply-connected closed subsets $U'_1, U'_2 \in U$ and $n \in \mathbb{N}$ such that $f^n : U'_1 \to U_1$ and $f^n: U'_2 \to U_2$ are homeomorphisms. Furthermore, they are also bi-Lipschitz by the definition of visual metrics. Thus there exist two quasisymmetric embeddings $f_1 : U_1 \to \mathbb{R}^2$ and $f_2 : U_2 \to \mathbb{R}^2$. Notice that $\mathcal{C}$ is a quasi-circle by Theorem $15.3$ in \cite{BM17}. Thus, by post-composing quasisymmetries, we may assume that $f_1 : U_1 \to \overline{\mathbb{D}}$ and $f_2 : U_2 \to \overline{\mathbb{D}}$ are two quasisymmetric maps. Here $\overline{\mathbb{D}}$ is the closed unit disk. Since $f_1 \circ f_2^{-1}$ is quasisymmetric on $\partial \mathbb{D}$, post-composing $f_2$ by a quasisymmetry again, we may assume that $f_1|_\mathcal{C} = f_2|_{\mathcal{C}}$. The quasisymmetries being post-composed above are constructed by Beurling-Ahlfors extension. Readers may refer to Section $5$ of \cite{Bo11} for a reference. Notice that $\overline{\mathbb{D}}$ is bi-Lipschitz to the closed half unit sphere with intrinsic metric. By gluing $f_1$ and $f_2$ via identifying the boundary, we obtain a homeomorphism $f: S^2 \to \mathbb{S}^2$. Since $S^2$ is a quasi-geodesic space and $\mathbb{S}$ is a length space, Applying Lemma \ref{gqs} and the Remarks following it finish the proof.
\end{proof}

\end{document}